\newcommand{\jj}{\mathcal{J}}
\newcommand{\ojj}{\overline{\mathcal{J}}}
\newcommand{\jtp}{\mathcal{J}^{[p]}}
\newcommand{\ojt}{\overline{\mathcal{J}}^{[2]}}
\newcommand{\ojtp}{\overline{\mathcal{J}}^{[p]}}
\newcommand{\jthp}{\mathcal{J}^{[p+1]}}
\newcommand{\ojthp}{\overline{\mathcal{J}}^{[p+1]}}
\newcommand{\ux}{u_{X/T*}}
\newcommand{\uy}{u_{Y/T*}}
\newcommand{\II}{\mathcal{I}}
\newcommand{\Ox}{\Omega_{X/T}}
\newcommand{\Oy}{\Omega_{Y/T}}
\newcommand{\Aut}{\mathrm{Aut}}
\newcommand{\Ext}{\mathrm{Ext}}
\newcommand{\cok}{\mathrm{coker}}
\newcommand{\T}{\mathcal{T}}
\newcommand{\C}{\mathbb{C}}
\newcommand{\Ri}{\mathrm{R}}
\newcommand{\Ql}{\mathbb{Q}_l}
\newcommand{\Q}{\mathbb{Q}}
\newcommand{\et}{\mathrm{H}_{\text{\'et}}}
\newcommand{\cris}{\mathrm{H}_{\mathrm{cris}}}
\newcommand{\OO}{\mathcal{O}}
\newcommand{\R}{\mathbb{R}}
\newtheorem{theorem}{Theorem}[section]
\newtheorem{cor}[theorem]{Corollary}
\newtheorem{lemm}[theorem]{Lemma}
\newtheorem{lemma}[theorem]{Lemma}
\newtheorem{prop}[theorem]{Proposition}
\DeclareMathOperator{\Ker}{Ker}
\DeclareMathOperator{\Id}{Id}
\DeclareMathOperator{\Pro}{Pr}
\DeclareMathOperator{\proj}{proj}
\DeclareMathOperator{\Spec}{Spec}
\DeclareMathOperator{\Ho}{H}
\DeclareMathOperator{\F}{\mathrm{F}}
\DeclareMathOperator{\DR}{DR}
\theoremstyle{definition}
\newtheorem{defi}[theorem]{Definition}
\newtheorem{Assumption}[subsection]{Notations and Assumptions}
\newtheorem{example}[theorem]{Example}
\newtheorem{remark}[theorem]{Remark}
\numberwithin{equation}{section}
\title []{p-adic Deformations of Graph Cycles}
\author {xuanyu pan}
\address{Max Planck Institute for Mathematics, Vivatsgasse 7, Bonn, Germany 53111}
\email{panxuanyu@mpim-bonn.mpg.de}
\date{\today}
\begin{document}

\maketitle
\dedicatory  

\begin{abstract}

In this paper, we show that the infinitesimal Torelli theorem implies the existence of deformations of automorphisms. In the first part, we use Hodge theory and deformation theory to study the deformations of automorphisms of complex projective manifolds. In the second part, we use crystalline cohomology to explore the $p$-adic analogues of the first part, which generalizes a result of Berthelot and Ogus. The study of the deformations of automorphisms also provides criterions characterizing when the action of the automorphism group of a variety on its cohomology is faithful.
\end{abstract}
\tableofcontents

\section {Introduction}
The Hodge conjecture characterizes when a Betti cohomology class of a complex projective manifold can be represented by an algebraic cycle. Let $X_0$ be a complex projective manifold with a Betti cohomology class $Z_0$ of even degree, say $Z_0\in \Ho^{2d}(X_0,\Q)$. The Betti cohomology class $Z_0$ is called a class $of$ $Hodge$ $type$ if $(Z_0)\otimes {\C}$ is in $ \Ho^{2d}(X_0,\C)\cap \Ho^d(X_0,\Omega_{X_0/\C}^d)$. One expects that $Z_0$ is representable by an algebraic cycle of dimension $d$ if $Z_0$ is of Hodge type.

Let $\pi:X\rightarrow S$ be a smooth projective morphism over a complex quasi-projective variety $S$, and let $0$ be a point of $S$. Deligne \cite{Deligne} shows that, for a family of horizontal Betti cohomology classes $\{Z_t\}_{t\in S}$ of degree $2d$ on $X/S$, all the classes are of Hodge type if $Z_0$ is. Therefore, the Hodge conjecture predicts that $\{Z_t\}_{t\in S}$ are representable by algebraic cycles if $Z_0$ is. This prediction is what we call the variational Hodge conjecture, see \cite[Conjecture 9.6]{MP}.

Suppose that $S$ is smooth and connected. Assume that the Betti cohomology class $Z_0$ is representable by a local complete intersection $Z$ in $X_0$ of codimension $d$. Bloch~\cite{Bloch} defines a semi-regularity map $$s:\Ho^1(Z,N_{Z/X_0})\rightarrow \Ho^{d+1}(X_0,\Omega_{X_0/\C}^{d-1})$$ and shows that if $s$ is injective, then the Betti cohomology classes $\{Z_t\}_{t\in S}$ are representable by algebraic cycles. 

In this paper, we show a similar result for automorphisms. Let \[\{h_t:\Ho^m(X_t,\Q)\rightarrow \Ho^m(X_t,\Q)\}_{t\in S}\] be a continuous family of maps. Suppose that $h_0$ is $\Ho^m(g_0,\Q)$ for some automorphism $g_0$ of $X_0$ and $X_0$ satisfies the infinitesimal Torelli theorem of degree $m$, i.e., the cup product 
\begin{equation}\label{introcup}
\Ho^1(X_0,T_{X_0})\rightarrow \bigoplus\limits_{p+q=m} \mathrm{Hom}(\Ho^p(X_0,\Omega_{X_0/\mathbb{C}}^q), \Ho^{p+1}(X_0,\Omega_{X_0/\mathbb{C}}^{q-1}))
\end{equation}
is injective. We show that there is a family of automorphisms $\{g_t:X_t \rightarrow X_t\}_{t\in U}$ over an open neighborhood $U$ of $0\in S$ such that $h_t=\Ho^m(g_t,\Q)$. This motivates the following definition.


\begin{defi}\label{condition1} 
Let $\pi:X\rightarrow S$ be a smooth projective morphism over a complex quasi-projective variety $S$, and let $0$ be a point of $S$. For a non-negative integer  $m$, an automorphism $g_0$ of $X_0$ is $of~m$-$Hodge~type~$on $X/S$ if $\Ho^m(g_0,\mathbb{Q})$ is the stalk \[h_0: ( \Ri^m\pi_*\mathbb{Q})_0\xrightarrow{} (\Ri^m\pi_*\mathbb{Q})_0\] of an endomorphism $h$ of the local system $\Ri^m\pi_*\mathbb{Q}$ at the point $0$ where $\Ri^m\pi_*$ is the $m$-th derived functor of $\pi_*$.
\end{defi}
\begin{remark} \label{rmkhdg}Let $\Omega^{\bullet}_{X/S}$ be the de Rham complex of $X/S$, and let $\Omega^{\bullet\geq p}_{X/S}$ be the complex obtained from $\Omega^{\bullet}_{X/S}$ by replacing terms in degrees less than $p$ by $0$. The $Hoge$ $filtration$ $\F^p( \R^m \pi_*\Omega^{\bullet}_{X/S})$ of $\R^m\pi_*\Omega^{\bullet}_{X/S}$ is given by $$\mathrm{Im}\left(\R^m \pi_*\Omega^{\bullet\geq p}_{X/S}\rightarrow \R^m\pi_*\Omega^{\bullet}_{X/S} \right),$$ cf. \cite[Definition 4.7]{MP}. By the results of Deligne \cite{Deligne}, the map $h$ has the following remarkable property. Under the natural comparison $$\Ri^m\pi_*\C\otimes_{\C} \OO_S \cong \R^m\pi_*\Omega^{\bullet}_{X/S},$$ 
the map $h\otimes\OO_S$ preserves the Hodge filtration of $ \R^m\pi_*( \Omega^{\bullet}_{X/S})$, namely, 
\begin{equation} \label{hodge}
h\otimes\OO_S\left( \F^p \R^m\pi_*( \Omega^{\bullet}_{X/S})\right) \subseteq \F^p  \R^m\pi_*( \Omega^{\bullet}_{X/S}) \text{~for~all~p}.
\end{equation}

\end{remark}

\begin{example}
For an integer $m$, if $g_0$ is an automorphism of $X_0$ with $\Ho^m(g_0,\Q)=\Id_{\Ho^m(X_0,\Q)}$, then $g_0$ is of $m$-Hodge type on $X/S$ by taking \[ h=\Id:\Ri^m\pi_*(\Q)\rightarrow \Ri^m\pi_*(\Q).\]
\end{example}

It is obvious that $g_0$ is of $m$-Hodge type on $X_{U}/U$ for all $m$ if $g_0$ has a deformation $g:X|_{U}\rightarrow X|_{U}$ over an open neighborhood $U$ of $0$ in $S$ by taking $h=\Ri^m\pi_*(g)$. It is easy to see that $h\otimes\OO_S$ preserves the Hodge filtration of $ \R^m\pi_*( \Omega^{\bullet}_{X_U/U})$. Conversely, we have one of the main theorems of this paper.
\begin{theorem}\label{thmdeform}
Let $\pi: X\rightarrow S$ be a smooth projective morphism from a complex variety $X$ to a smooth curve $S$, and let $g_0$ be an automorphism of the fiber $X_0$ of $\pi$ over a point $0$ of $S$. Suppose that $X_0$ satisfies the infinitesimal Torelli theorem of degree $m$ for some integer $m$. If the automorphism $g_0$ is of $m$-Hodge type on $X/S$,
then $g_0$ has a deformation \[g:X|_{U}\rightarrow X|_{U}\] over an open neighborhood $U$ of $0\in S$.
\end{theorem}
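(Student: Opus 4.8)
The plan is to lift $g_0$ to a compatible system of automorphisms $g_n\colon X_n\to X_n$ over the infinitesimal thickenings $S_n=\Spec\C[t]/(t^{n+1})$ of $0\in S$ (here $t$ is a local coordinate and $X_n=X\times_S S_n$), to assemble these into a formal automorphism of $X$ along $X_0$, and finally to algebraize. The deformation theory of isomorphisms of smooth families identifies the obstruction to extending a lift $g_n$ to $g_{n+1}$ over $S_{n+1}$ with a class $\obs_n\in\Ho^1(X_0,T_{X_0})$ (tensored with the line $(t^{n+1})/(t^{n+2})$, which I suppress), the lifts forming a torsor under $\Ho^0(X_0,T_{X_0})$ when $\obs_n=0$. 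A useful rigidity makes the induction painless. The Gauss--Manin connection $\nabla$ on $\mathcal H:=\Ri^m\pi_*\Omega^\bullet_{X/S}$ is functorial, so the de Rham action of any lift $g_n$ is horizontal for $\nabla$; since a horizontal endomorphism of $(\mathcal H,\nabla)$ over $S_n$ is determined by its value at $0$, and since $\hat h:=h\otimes\OO_S$ is itself horizontal (endomorphisms of the local system $\Ri^m\pi_*\Q$ are exactly the flat endomorphisms) with $\hat h|_0=\Ho^m(g_0,\C)$, the de Rham action of every lift $g_n$ automatically equals $\hat h\bmod t^{n+1}$. Thus I never need to track compatibility by hand, and the whole task is to prove $\obs_n=0$.

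I first treat $n=0$, the model for everything. Griffiths transversality lets $\nabla$ descend to a map on the Hodge graded pieces of $\mathcal H$, which at $0$ is the cup product with the Kodaira--Spencer class $\kappa\in\Ho^1(X_0,T_{X_0})$, namely the contraction $\kappa\cup-\colon\Ho^p(X_0,\Omega^q_{X_0})\to\Ho^{p+1}(X_0,\Omega^{q-1}_{X_0})$ ($p+q=m$) appearing in \eqref{introcup}. The first-order obstruction is $\obs_0=g_0^*\kappa-\kappa$. Since $\hat h$ is horizontal and preserves the Hodge filtration by \eqref{hodge}, its action on graded pieces commutes with this graded connection map; at $0$ this reads $g_0^*(\kappa\cup\eta)=\kappa\cup(g_0^*\eta)$ for all $\eta\in\Ho^p(X_0,\Omega^q_{X_0})$. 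On the other hand, naturality of the cup product under $g_0$ gives $g_0^*(\kappa\cup\eta)=(g_0^*\kappa)\cup(g_0^*\eta)$. Comparing, and letting $\zeta=g_0^*\eta$ range over all of $\Ho^p(X_0,\Omega^q_{X_0})$, we get $(g_0^*\kappa-\kappa)\cup\zeta=0$ for every $\zeta$ and every $p+q=m$. Injectivity of the cup product \eqref{introcup}---the infinitesimal Torelli hypothesis---now forces $g_0^*\kappa=\kappa$, so $\obs_0=0$ and $g_1$ exists.

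The inductive step runs along the same lines, packaged through $\hat h$. Having built $g_n$, its de Rham action equals $\hat h\bmod t^{n+1}$ by the rigidity above, and $\hat h\bmod t^{n+2}$ furnishes a horizontal, filtration-preserving extension of that action to order $n+1$. The infinitesimal period relation---now in its higher-order form---expresses the cup products $\obs_n\cup\eta\in\Ho^{p+1}(X_0,\Omega^{q-1}_{X_0})$ precisely as the obstruction to extending the de Rham action of $g_n$ to a horizontal, filtered automorphism one order further. Since $\hat h$ supplies such an extension, all these cup products vanish, and infinitesimal Torelli again gives $\obs_n=0$. Choosing any lift $g_{n+1}$ and iterating produces a formal automorphism $\hat g$ of $X$ along $X_0$ restricting to $g_0$. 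I expect the main difficulty to be exactly this higher-order period relation: identifying the geometric obstruction $\obs_n\in\Ho^1(X_0,T_{X_0})$ with the cohomological lifting obstruction requires a careful comparison of the deformation theory of the isomorphism $g_n$ with that of its filtered, horizontal de Rham realization, whereas at first order this comparison collapses to the clean identity $\obs_0=g_0^*\kappa-\kappa$ together with naturality of the cup product.

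Finally I algebraize. Since $\pi$ is smooth and projective, the relative automorphism functor $\Aut(X/S)$ is representable by a group scheme locally of finite type over $S$, and $\hat g$ is a formal section of $\Aut(X/S)\to S$ through the point $g_0$ of the fiber over $0$. Because $S$ is a smooth curve and $\Aut(X/S)$ is of finite type over it, Artin approximation upgrades this formal section to an honest section over an \'etale---hence, in the complex-analytic category, over an ordinary open---neighborhood $U$ of $0$. This section is the desired deformation $g\colon X|_U\to X|_U$ of $g_0$.
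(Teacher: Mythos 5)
Your skeleton --- lifting $g_0$ order by order over $S_n=\Spec\C[t]/(t^{n+1})$, killing obstructions via horizontality of $h$ plus infinitesimal Torelli, then passing to a formal automorphism and algebraizing --- is the same as the paper's, and your $n=0$ case is essentially the paper's argument at first order: horizontality plus naturality of the cup product give $(g_0^*\kappa-\kappa)\cup\zeta=0$ for all $\zeta$, and Torelli kills $g_0^*\kappa-\kappa$. Your algebraization paragraph is fine (if anything, more careful than the paper's). The problem is the inductive step, and it is a genuine gap, exactly where you flagged it: the ``higher-order period relation'' identifying the cup products $\obs_n\cup\eta$, $\eta\in\Ho^p(X_0,\Omega^q_{X_0})$, with the obstruction to extending the de Rham action of $g_n$ to a horizontal, filtered endomorphism over $S_{n+1}$. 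You never prove this, and it is not formal: it is precisely the hard comparison between the deformation theory of the morphism $g_n$ and its Hodge-theoretic shadow. The paper proves a statement of exactly this type --- Proposition \ref{generalremark}, asserting $\rho(f_0)=\pm\,ob(f_0)\cup-$ --- only in the crystalline setting where it is needed for Theorem \ref{corolifting}, and that proof occupies Sections \ref{cryobs} and \ref{bloch}, with delicate diagram chases through the PD envelope of the graph of $f_0$; the paper explicitly notes that the case of filtration level $p\geq 2$ is ``much more complicated'' than the known $p=1$ case. So this identity cannot be waved through as a known period relation.

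It is worth noting that the paper's proof of Theorem \ref{thmdeform} avoids this identity altogether, which is why your route and its route diverge at the crux. The paper never computes $\obs_n$. Instead: (i) horizontality of $h_\C$ over $S_N$ gives $g_N^*(\zeta)\cup(\beta-g_N^*\beta)=0$ for all $\zeta\in\Ho^{m-p}(X_N,\Omega^p_{X_N/S_N})$, where $\beta=K_{X_{N+1}/S_{N+1}/\C}\otimes\OO_{S_N}$ is the restricted Kodaira--Spencer extension class; (ii) to conclude $g_N^*\beta=\beta$ one needs injectivity of the cup product out of $\Ext^1(\Omega^1_{X_N/S_N},\OO_{X_N})$ \emph{over the Artinian base} $S_N$ --- a bigger group than your $\Ho^1(X_0,T_{X_0})\otimes(t^{n+1})/(t^{n+2})$, so Torelli at $X_0$ alone does not suffice; the paper gets it by induction on $N$ using the base-change sequence $0\to(t^N)\otimes\OO_{X_0}\to\OO_{X_N}\to\OO_{X_{N-1}}\to 0$; and (iii) Lemma \ref{lemmlift}, proved by explicit homological algebra (Lemmas \ref{lemcom1} and \ref{lemcom2}), shows directly that $g_N^*\beta=\beta$ yields a ring-level lift $g_{N+1}$, with no identification of an obstruction class ever needed. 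Your approach trades the paper's level-$N$ injectivity induction for the unproven obstruction identity; to complete it you would have to prove that identity (essentially a complex-analytic analogue of Proposition \ref{generalremark}), or else replace your inductive step by (i)--(iii).
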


One interesting application of Theorem \ref{thmdeform} is Corollary \ref{faithful} characterizing when the action of the automorphism group of a variety on its cohomology is faithful. Historically speaking, the faithfulness question of the action is first explored for varieties of low dimensions. It is well known that the faithfulness is confirmed for algebraic curves of genus at least 2 by Klein. The case of $K3$ surfaces is confirmed by Burns, Rapoport, Shafarevich, Ogus, among others. However, few higher dimensional varieties are known to have a positive answer to this question. Using Corollary \ref{faithful}, one can show that the automorphism groups of higher dimensional varieties (e.g. complete intersections, see \cite{PAN2} for details) act on their cohomology faithfully as long as they satisfy the infinitesimal Torelli theorem and the general members of these varieties have no non-trivial automorphisms.

The second part of this paper is to show the $p$-adic analogue of Theorem \ref{thmdeform}. 

Recall from \cite[Definition 4.7]{MP} that the $q$-th de Rham cohomology $\Ho^q_{\DR}(\mathcal{X}/B)$, for a smooth proper morphism $g:\mathcal{X}\rightarrow B$ of noetherian schemes is defined as the coherent $\OO_B$-module $\R^qg_*\Omega^{\bullet}_{\mathcal{X}/B}$. The $Hodge~filtration$ of $\Ho^q_{\DR}(\mathcal{X}/B)$ is given by
\[ \F^p( \R^q g_*\Omega^{\bullet}_{\mathcal{X}/B}):=\mathrm{Im}\left(\R^qg_*\Omega^{\bullet\geq p}_{\mathcal{X}/B}\rightarrow \R^qg_*\Omega^{\bullet}_{\mathcal{X}/B} \right).\]
With these notions, the $p$-adic analogue of the variational Hodge conjecture can be formulated as follows. Let $k$ be an algebraically closed field of positive characteristic, and let $W(k)$ be the Witt ring of $k$ with the fraction field $K$. Suppose that $\pi': X\rightarrow W(k)$ is a smooth proper morphism with the closed fiber $X_0$. Let $\cris^{*}(X_0/W(k))$ be the crystalline cohomology of $X_0$. Based on the connection between crystalline cohomology and Hodge theory, ~Bloch, Esnault, and~Kerz \cite[Conjecture 1.2]{BEK},~Maulik and~Poonen \cite[Conjecture 9.2]{MP}, Fontaine and~Messing propose the following conjecture.\\

\noindent\textsc{Conjecture.} The rational crystalline cycle class of an algebraic cycle $Z_0\in \cris^{2r}(X_0/W(k))_K$, expressed as a de Rham class on $X_K/K$ under the natural comparison $\cris^{2r}(X_0/W(k))_K\cong \Ho^{2r}_{\DR}(X_{K}/K)$, is the cycle class of an algebraic cycle on $X/K$, if and only if $Z_0$ is in the right level of the Hodge filtration $\F^r\Ho^{2r}_{\DR}(X_K/K)$. \\

Inspired by the conjecture above, it is natural to ask for the $p$-adic analogue of Theorem \ref{thmdeform}. The natural way to formulate the $p$-adic analogue is to replace the morphism $\pi:X\rightarrow S$ in Theorem \ref{thmdeform} by the morphism $\pi ': X\rightarrow W(k)$ over $W(k)$. In this case, $X_0$ should be the closed fiber of $\pi'$. Furthermore, by Hodge theory, a notable feature of $X/S$ is that its Hodge-de Rham spectral sequence degenerates at $E_1$ with locally free terms. Therefore, we introduce the following definition.

\begin{defi}
Let $k$ be an algebraically closed field. A $W(k)$-scheme $X$ over the Witt ring $W(k)$ of $k$ is $of~Hodge~type$ if the structure morphism $\pi': X\rightarrow W(k)$ is smooth and projective such that the Hodge-de Rham spectral sequence of $\pi'$ degenerates at $E_1$ page and the terms are locally free. 
\end{defi}

Motivated by  (\ref{hodge}) in Remark \ref{rmkhdg}, we make the following analogous definition.
\begin{defi}\label{hodgetype}
Let $k$ be an algebraically closed field, and let $\pi ': X\rightarrow \Spec W(k)$ be a smooth proper morphism over the Witt ring $W(k)$.  For a non-negative integer $m$, an automorphism $g_0$ of the closed fiber $X_0$ of $\pi'$ is $of$ $m$-$Hodge$ $type$ on $X/W(k)$ if the map \[\cris^m(g_0): \cris^m(X_0/W)\rightarrow \cris^m(X_0/W)\] preserves the Hodge filtrations under the natural identification $\cris^m(X_0/W)\cong \Ho^m_{\DR}(X/W)$. 
\end{defi}

Now, we are able to state the $p$-adic analogue of Theorem \ref{thmdeform}.

\begin{theorem}\label{corolifting}
Let $k$ be an algebraically closed field, and let $X$ be a $W(k)$-scheme of Hodge type with the closed fiber $X_0$. Suppose that $X_0$ satisfies the infinitesimal Torelli theorem of degree $m$ for some integer $m$, i.e., the cup product\begin{equation}\label{cupproduct}
\Ho^1(X_0,T_{X_0})\rightarrow
\bigoplus\limits_{p+q=m} \mathrm{Hom}(\Ho^q(X_0,\Omega^p_{X_0/k}),\Ho^{q+1}(X_0,\Omega^{p-1}_{X_0/k}))
\end{equation}is injective. If $g_0$ is an automorphism of $X_0$ of $m$-Hodge type on $X/W(k)$, then one can lift $g_0$ to an automorphism $g:X\rightarrow X$ over $W(k)$. 
\end{theorem}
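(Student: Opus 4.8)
The plan is to lift $g_0$ one infinitesimal step at a time along the thickenings $W_n:=W(k)/p^{n+1}$ and then algebraize. Write $X_n:=X\times_{W(k)}W_n$, so that $X_0$ is the closed fiber and $X_n$ reduces to $X_{n-1}$ across the square-zero ideal $I_n:=p^nW_n\cong k$. I would first reduce the theorem to the following claim: every automorphism $g_{n-1}$ of $X_{n-1}/W_{n-1}$ lifting $g_0$ extends to an automorphism $g_n$ of $X_n/W_n$. Granting the claim, a repeated choice of lifts yields a compatible system $\{g_n\}$, hence an automorphism $\hat g$ of the formal completion $\hat X$ of $X$ along $X_0$; since $X$ is projective over the complete ring $W(k)$, Grothendieck's existence theorem, applied to the graph of $\hat g$ inside $\widehat{X\times_{W(k)}X}$, algebraizes $\hat g$ to a morphism $g:X\to X$ over $W(k)$. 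Each $g_n$ is automatically an isomorphism, being a $W_n$-morphism of flat $W_n$-schemes that is an isomorphism modulo the nilpotent ideal, so $g$ is an automorphism, as required.

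For the infinitesimal step, standard deformation theory of morphisms identifies the obstruction to lifting $g_{n-1}$ to a $W_n$-morphism $X_n\to X_n$ with a class $o_n\in\Ho^1(X_0,T_{X_0/k})\otimes_k I_n$, and, when $o_n$ vanishes, the lifts form a torsor under $\Ho^0(X_0,T_{X_0/k})\otimes_k I_n$. Equivalently, viewing the graph $\Gamma_{g_{n-1}}\subset X_{n-1}\times_{W_{n-1}}X_{n-1}$, the class $o_n$ is the obstruction to deforming this cycle, living in $\Ho^1(\Gamma_{g_0},N_{\Gamma_{g_0}})\cong\Ho^1(X_0,T_{X_0/k})\otimes_k I_n$; this is the point at which the infinitesimal Torelli hypothesis will play the role of the injectivity of a semi-regularity map. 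Concretely, if $\kappa_n\in\Ho^1(X_0,T_{X_0/k})\otimes_k I_n$ denotes the Kodaira--Spencer class of the lifting $X_n$ of $X_{n-1}$, then $o_n=\kappa_n-g_0^{*}\kappa_n$, where $g_0^{*}$ is the action induced by $g_0$ on $\Ho^1(X_0,T_{X_0/k})$; in particular $o_n$ vanishes exactly when $g_0$ fixes the deformation class of the given lifting.

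The crux is to detect the vanishing of $o_n$ through the infinitesimal Torelli map (\ref{cupproduct}). Because $X$ is of Hodge type, the Hodge filtration on $\Ho^m_{\DR}(X_n/W_n)$ is locally free with graded pieces $\Ho^q(X_0,\Omega^p_{X_0/k})\otimes_k W_n$ for $p+q=m$, and the crystalline--de Rham comparison gives a canonical filtered identification $\cris^m(X_0/W_n)\cong\Ho^m_{\DR}(X_n/W_n)$ on which $\cris^m(g_0)$ acts functorially; crucially, this action is defined whether or not $g_0$ lifts geometrically. I would then show that the image of $o_n$ under the cup-product map (\ref{cupproduct}), tensored with $I_n$, equals the defect measuring the failure of $\cris^m(g_0)$ to preserve $\F^\bullet\Ho^m_{\DR}(X_n/W_n)$: the Gauss--Manin connection expresses the position of $\F^p\Ho^m_{\DR}(X_n/W_n)$ relative to the split filtration as cup product with $\kappa_n$, while $\cris^m(g_0)$ transports this by $g_0^{*}\kappa_n$, so the total first-order defect is the image of $o_n=\kappa_n-g_0^{*}\kappa_n$. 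Since $g_0$ is of $m$-Hodge type, $\cris^m(g_0)$ does preserve $\F^\bullet$, so this defect vanishes; by injectivity of (\ref{cupproduct}) we conclude $o_n=0$, and the lift $g_n$ exists. Iterating produces the compatible system.

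I expect the main obstacle to be the precise identification in the last paragraph: relating the geometric obstruction $o_n$, a Kodaira--Spencer and normal-bundle class, to the Hodge-theoretic defect of $\cris^m(g_0)$ on $\Ho^m_{\DR}(X_n/W_n)$. This requires matching three a priori different structures, namely the deformation class of the lifting, the functorial crystalline action coming from the comparison isomorphism, and the cup product of (\ref{cupproduct}), and verifying their compatibility with the Gauss--Manin connection at each finite level. The hypothesis that $X$ is of Hodge type, that is, degeneration at $E_1$ with locally free terms, is exactly what keeps the Hodge filtration, its graded pieces, and the filtered comparison isomorphism well behaved under the base changes to $W_n$, and hence what makes this identification go through uniformly in $n$.
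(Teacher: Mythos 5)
Your overall architecture is the same as the paper's: lift $g_0$ step by step through the square-zero thickenings $W_n=W(k)/(\pi^{n+1})$, locate the obstruction in $\Ho^1(X_0,g_0^*T_{X_0/k})\otimes_k(\pi^{n+1})$, kill it using injectivity of (\ref{cupproduct}) together with the base-change consequences of the Hodge-type hypothesis, and algebraize the resulting compatible system by Grothendieck's existence theorem. The genuine gap is precisely the step you yourself flag as ``the main obstacle'' and then leave unproved: the identity relating the obstruction class to the Hodge-filtration defect of $\cris^m(g_0)$. In the paper this identity is Proposition \ref{generalremark}, which asserts that the map $\rho(g_n)_q$ induced by $\cris^{p+q}(g_n)$ from $\F^p_{Hdg}$ into $gr^{p-1}_{\F}\otimes(\pi^{n+1})$ coincides, up to sign, with cup product against the obstruction class $ob(g_n)$; its proof is the technical core of the paper, occupying Sections \ref{cryobs} and \ref{bloch} (chain-level descriptions of $\rho(f_0)$ and of $ob(f_0)\cup$ via the PD envelope $D$ of $X_0$ in $X\times_T Y$, the filtered Poincar\'e lemma, and the commutativity of the diagrams $I$, $II$, $III$). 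Your proposal reduces the theorem to this claim but supplies only a heuristic in its place, so essentially all of the content of the theorem remains unestablished.

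Moreover, the heuristic you sketch does not transplant to mixed characteristic. You propose to express the position of $\F^p\Ho^m_{\DR}(X_n/W_n)$ ``relative to the split filtration'' as cup product with a Kodaira--Spencer class $\kappa_n$ via the Gauss--Manin connection, and to identify the filtration defect of $\cris^m(g_0)$ with cup product against $o_n=\kappa_n-g_0^*\kappa_n$; this mirrors the complex-analytic proof of Theorem \ref{thmdeform}, which rests on Propositions \ref{prop1} and \ref{prop2}. But over $\Spec W_n$ there is no smooth base direction, hence no Gauss--Manin connection or horizontality in that sense, and no canonical splitting of the Hodge filtration; the comparison between the filtration on $\Ho^m_{\DR}(X_{n+1}/W_{n+1})$ and data living on $X_n$ is exactly what the crystal structure must supply. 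The paper makes this point explicitly in Section \ref{cryobs}: in mixed characteristic the functorial map $\cris^k(f_0)$ replaces the horizontal morphism of the complex case, and extracting from it the class $\rho(f_0)$, and then comparing $\rho(f_0)$ with $ob(f_0)\cup$, requires the crystalline computation with $\jtp_{X_0/T}$, the PD envelope, and the filtered quasi-isomorphism $(\Omega^{\bullet}_{X/T},\F_{X_0})\cong(\Omega^{\bullet}_{D/T},\F_{X_0})$ --- in effect a generalization of Berthelot--Ogus \cite[Proposition 3.20]{derham} from $p=1$ to all $p\geq 2$. Until you prove that identification (or cite a result of this strength), the vanishing of $o_n$ does not follow from the $m$-Hodge-type hypothesis, and the induction cannot start.
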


Theorem \ref{corolifting} is predicted by the $p$-adic variational Hodge conjecture, see \cite[Remark 6.5]{EO}. Ogus \cite[Corollary 2.5]{Ogus} proves a special case of this theorem for automorphisms of K3 surfaces. Later on, Berthelot and Ogus \cite[Theorem 3.15]{BO} prove a special case of the theorem for abelian varieties.  

To use Theorem \ref{corolifting}, we have to verify that $X_0$ satisfies the infinitesimal Torelli theorem. In fact, it is verified for complete intersections and cyclic coverings in \cite{PAN2} and \cite{PAN3}. Therefore, it follows from Theorem \ref{corolifting} that, for many smooth projective varieties over an arbitrary field, their automorphism groups act faithfully on their \'etale cohomology, see Corollaries \ref{faithful} and \ref{padicfaithful}. For the applications of the faithfulness of the automorphism group action on the cohomology to arithmetic and moduli problems, we refer to \cite{PAN1}, \cite{PAN2}, \cite{PAN3}, \cite{DL1}, and \cite{DL2}.

To end this introduction, we outline the main ingredients of the proofs of Theorem \ref{thmdeform} and Theorem \ref{corolifting}. Theorem \ref{thmdeform} involves some techniques of homological algebra, deformation theory and the theory of de Rham cohomology. The key step of the proof is to use the Riemann-Hilbert correspondence to translate a morphism between local systems into a morphism between vector bundles with integrable connections, see Section \ref{deforms}. To prove Theorem \ref{corolifting}, this kind of translation is replaced by the rigidity of crystalline cohomology, see Sections \ref{cryobs} and \ref{bloch}.

  \textbf{Acknowledgments.} The author is very grateful to~S.~Bloch for some suggestions on this project during his visiting in Washington University in St.~Louis. The author is also very grateful to~H.~Esnault for pointing out some references and typos of the manuscript. The author also thanks ~L.~Illusie and~M.~Kerr for their interest in this project, and~J.~de Jong for giving lectures on crystalline cohomology when the author was a graduate student in Columbia University. One part of the paper was written in Morningside Center of Mathematics in Beijing. The author thanks~Y.~Tian and~W.~Zheng for their invitation and warm hospitality. 
  
\section{Homological Algebra and de Rham Cohomology} \label{homoalg}
In this section, we prove some results of homological algebra. We also cite some results of the theory of de Rham cohomology. They will be used in Section \ref{deforms} to prove Theorem \ref{thmdeform}.
\begin{lemm}\label{lemcom1} Suppose that we have the following exact sequences and diagram
\[\xymatrix{ &0 \ar[r] &M \ar@{->}[ddl] \ar@{=}[dl] \ar[r] &C \ar@{->}[ddl] \ar[r] &C_0\ar[r] \ar[dl] \ar@{->}[ddl] &0\\
0 \ar[r] &M\ar@{=}[d] \ar[r]|-{\hole} &B \ar[d] \ar@{}[dr] \ar[r]|-{\hole} &B_0 \ar[d]\ar[r]|-{\hole} &0\\
0 \ar[r] &M \ar[r] &A\ar[r] &A_0\ar[r] &0}\]
in an abelian category.
There is a morphism $C\rightarrow B$ filling in the diagram such that the whole diagram commutes.
\end{lemm}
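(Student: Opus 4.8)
The plan is to exploit the universal property of a fibre product. Reading off the data from the diagram, we are given two morphisms of short exact sequences onto the bottom row $0\to M\to A\to A_0\to 0$: the middle row $0\to M\to B\xrightarrow{\pi_B} B_0\to 0$ maps down by the identity on $M$ and by maps $\beta\colon B\to A$, $\beta_0\colon B_0\to A_0$; the top row $0\to M\to C\xrightarrow{\pi_C} C_0\to 0$ maps down by the identity on $M$ and by maps $\gamma\colon C\to A$, $\gamma_0\colon C_0\to A_0$. We are also given $c_0\colon C_0\to B_0$ with $\beta_0 c_0=\gamma_0$. The goal is a map $c\colon C\to B$ restricting to the identity on $M$, with $\pi_B c=c_0\pi_C$ and $\beta c=\gamma$; these are exactly the squares of the completed diagram that must commute.

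The key observation is that the middle-to-bottom ladder exhibits $B$ as the fibre product $A\times_{A_0}B_0$. Indeed, the comparison map $(\beta,\pi_B)\colon B\to A\times_{A_0}B_0$ is a morphism from $0\to M\to B\to B_0\to 0$ to $0\to M\to A\times_{A_0}B_0\to B_0\to 0$, the latter being the pullback of the epimorphism $A\to A_0$ along $\beta_0$, whose kernel is again $M$. Since this comparison is the identity on the sub-objects $M$ and on the quotients $B_0$, the short five lemma shows that $(\beta,\pi_B)$ is an isomorphism. This is the one place where the hypothesis that the connecting maps on $M$ are the identity (not merely arbitrary) is used, so I expect it to be the main obstacle; once it is in place, the rest is formal.

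Granting the identification $B\cong A\times_{A_0}B_0$, I would define $c$ by the universal property applied to the pair $\gamma\colon C\to A$ and $c_0\pi_C\colon C\to B_0$. These are compatible over $A_0$: composing $\gamma$ into $A_0$ gives $\gamma_0\pi_C$ by commutativity of the top ladder, while composing $c_0\pi_C$ gives $\beta_0 c_0\pi_C=\gamma_0\pi_C$ using $\beta_0 c_0=\gamma_0$, so the two agree. This produces $c\colon C\to A\times_{A_0}B_0\cong B$ satisfying $\beta c=\gamma$ and $\pi_B c=c_0\pi_C$ by construction.

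Finally I would check the remaining compatibility on $M$. Precomposing with $M\to C$, the projection of $c|_M$ to $A$ is $\gamma(M\to C)=(M\to A)$ (top ladder is the identity on $M$) and its projection to $B_0$ is $c_0\pi_C(M\to C)=0$ (since $M$ is the kernel of $\pi_C$); these coincide with the two projections of the inclusion $M\to A\times_{A_0}B_0$, so $c|_M$ equals that inclusion and hence, transported through the isomorphism, equals $M\to B$. Thus $c$ fills in the diagram and makes the whole of it commute.
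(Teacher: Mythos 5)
Your proof is correct and takes essentially the same route as the paper: the paper's one-line argument is precisely the observation that the middle row (and the top row) is the pullback of the bottom row along $B_0\rightarrow A_0$ (resp. $C_0\rightarrow A_0$), from which the map $C\rightarrow B$ follows by the universal property. Your write-up simply makes this explicit, supplying the short five lemma identification $B\cong A\times_{A_0}B_0$ and the verification on $M$ that the paper leaves to the reader.
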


\begin{proof}
Note that the exact sequence $0\rightarrow M \rightarrow B \rightarrow B_0\rightarrow 0$ 
is the pull-back of 
\[0\rightarrow M \rightarrow A \rightarrow A_0\rightarrow 0\] via the morphism $B_0\rightarrow A_0$ and the exact sequence \[0\rightarrow M \rightarrow C \rightarrow C_0\rightarrow 0\] is the pull-back of 
$0\rightarrow M \rightarrow A \rightarrow A_0\rightarrow 0$ via the morphism $C_0\rightarrow A_0.$ The lemma follows from this remark.
\end{proof}

\begin{lemm}\label{lemcom2} Suppose that we have exact sequences and an diagram (in an abelian category) as follows.
\begin{equation}\label{commu}
\xymatrix{ 0 \ar[r] &M_0 \ar@/_1pc/@{->}[ddrr] \ar@{>->}[d] \ar[r] &B_0 \ar@{>->}[d] \ar[r] &A_0 \ar@{=}[d] \ar[r]  \ar@/^0.9pc/@{->}[ddrr] &0\\
0 \ar[r] &M_0' \ar@/_1pc/@{->}[ddrr] \ar[r] &B_0' \ar@/^0.9pc/@{->}[ddrr]\ar[r] &A_0\ar[r]\ar@/^0.9pc/@{->}[ddrr]  &0\\
& &0 \ar[r] &M_1 \ar@{>->}[d] \ar[r] &B_1 \ar@{>->}[d] \ar[r] &A_1 \ar@{=}[d] \ar[r]  &0\\
& &0 \ar[r] &M_1' \ar[r] &B_1' \ar[r] &A_1\ar[r]  &0\\
}
\end{equation}

Assume that the morphisms $M_0\rightarrow M'_0$ and $M_1\rightarrow M'_1$ are monic. There is a morphism $B_0\rightarrow B_1$ filling in the diagram such that the whole diagram commutes.
\end{lemm}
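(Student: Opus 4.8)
The plan is to produce the arrow $B_0\to B_1$ by first mapping $B_0$ into $B_1'$ and then showing the image already lies in the subobject $B_1$. The starting observation is that the hypotheses ``$M_i\to M_i'$ monic'' together with the identities on the $A_i$ force each lower row to be the pushout of the upper one along $M_i\to M_i'$: the comparison map from the pushout to $B_i'$ is an isomorphism by the short five lemma, since it is the identity on $M_i'$ and on $A_i$. In particular $B_0\to B_0'$ and $B_1\to B_1'$ are monic, and the snake lemma applied to the two right-hand (primed over unprimed) maps of short exact sequences yields canonical identifications $\cok(B_i\to B_i')\cong\cok(M_i\to M_i')$. I would record these cokernel identifications at the outset, because they are exactly the device that detects when a map into $B_1'$ factors through $B_1$.

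I would then set $\phi:=\beta\circ(B_0\to B_0')\colon B_0\to B_1'$, where $\beta\colon B_0'\to B_1'$ is the given connecting map. Since $B_1\hookrightarrow B_1'$ is monic, any arrow $B_0\to B_1$ compatible with the verticals and with $\beta$ is \emph{unique}, and it exists precisely when $\phi$ factors through $B_1$. By the identification above this is equivalent to the vanishing of the composite
\[
\bar\phi\colon B_0\xrightarrow{\phi}B_1'\twoheadrightarrow\cok(B_1\to B_1')\cong\cok(M_1\to M_1').
\]
Thus the entire lemma collapses to the single equation $\bar\phi=0$. A first, easy step toward this is that $\bar\phi$ kills the image of $M_0\to B_0$: commutativity of $M_0'\to B_0'\xrightarrow{\beta}B_1'$ with $M_0'\to M_1'\to B_1'$ rewrites the restriction of $\bar\phi$ to that image as $M_0\to M_0'\to M_1'\to\cok(M_1\to M_1')$, which vanishes because $M_0\to M_0'\to M_1'=M_0\to M_1\to M_1'$ and $M_1\to M_1'\to\cok(M_1\to M_1')$ is zero. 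Hence $\bar\phi$ factors through the quotient $B_0\twoheadrightarrow A_0$.

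The main obstacle, and the place where the full strength of diagram~(\ref{commu}) must be used, is showing that the induced map $A_0\to\cok(M_1\to M_1')$ is itself zero; the computation so far only controls the $M_0$-part, and the residual $A_0$-part is precisely the delicate point. The route I would take is to compare extension classes: writing $e_i$ for the class of the unprimed row and using that the primed rows are the pushouts $(M_i\to M_i')_*e_i$, the existence of $\beta$ together with compatibility of pullback along $A_0\to A_1$ with pushforward shows that $(A_0\to A_1)^*e_1$ and $(M_0\to M_1)_*e_0$ have the same image in $\Ext^1(A_0,M_1')$; the obstruction $\bar\phi$ is exactly the difference of these two classes in $\Ext^1(A_0,M_1)$, so its vanishing is what I must extract from the monicity of $M_1\to M_1'$ and the commutativities already in place. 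Once $\bar\phi=0$ is established, $\phi$ factors uniquely through $B_1$, I name this factorization $B_0\to B_1$, and I finish by verifying the three squares it occupies (with the $M_i$, with the $A_i$, and the prism square against $\beta$): each follows by post-composing with the monic $B_1\to B_1'$ or $M_1\to M_1'$, or pre-composing with the epi $B_0\to A_0$, and invoking the established commutativities, so no new computation is needed there.
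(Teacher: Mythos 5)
Your reduction is sound as far as it goes, and it in fact sharpens the paper's own argument: the paper likewise identifies $\cok(B_i\to B_i')$ with $\cok(M_i\to M_i')$ via the snake lemma and then simply asserts that the resulting cokernel diagram ``induces'' the filler $B_0\to B_1$. But the step you yourself call the main obstacle --- the vanishing of the residual map $A_0\to\cok(M_1\to M_1')$, i.e.\ $\bar\phi=0$ --- is never proved in your proposal: you reduce the lemma to this single equation, announce that its vanishing ``is what I must extract from the monicity of $M_1\to M_1'$'', and then continue as if it were established. This is a genuine gap, and it cannot be closed. Write $\mu_i\colon M_i\to M_i'$ for the vertical maps, $m\colon M_0\to M_1$, $a\colon A_0\to A_1$ for the diagonal maps, $e_i$ for the classes of the unprimed rows, and $C_1=\cok\mu_1$. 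Monicity of $\mu_1$ gives only the exact sequence
\[
\mathrm{Hom}(A_0,C_1)\longrightarrow \Ext^1(A_0,M_1)\xrightarrow{(\mu_1)_*}\Ext^1(A_0,M_1'),
\]
and the hypotheses of the lemma (the primed rows are the pushouts, $\beta$ exists covering $(m',a)$, and $m'\mu_0=\mu_1 m$) say exactly that $(\mu_1)_*\bigl(m_*e_0-a^*e_1\bigr)=0$. So the obstruction class you must kill is only known to lie in the image of $\mathrm{Hom}(A_0,C_1)$; nothing in the hypotheses forces it to vanish, and your $\psi$ is precisely a witness of it.

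Indeed the statement as given is false, so no argument can complete your outline. In the category of abelian groups take row $0$ to be the split sequence $0\to\Z/2\to\Z/2\oplus\Z/2\to\Z/2\to 0$, row $1$ to be $0\to\Z/2\xrightarrow{2}\Z/4\to\Z/2\to 0$, and both primed rows to be the split sequence $0\to\Z/4\to\Z/4\oplus\Z/2\to\Z/2\to 0$. Vertical maps: $\mu_0=\mu_1=2\colon\Z/2\to\Z/4$, $B_0\to B_0'$ given by $(x,y)\mapsto(2x,y)$, $B_1\to B_1'$ given by $n\mapsto(n,\bar n)$, identities on the $A$'s. Diagonal maps: $m$, $m'$, $a$, and $\beta$ all identities. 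Every given square commutes and all vertical maps are monic, yet a filler $g\colon\Z/2\oplus\Z/2\to\Z/4$ would have to satisfy $(B_1\to A_1)\circ g=a\circ(B_0\to A_0)$, so $g(0,1)$ would be odd, hence of order $4$ in $\Z/4$, while $(0,1)$ has order $2$ --- impossible. Note this breaks even the bare existence of a map compatible with the two rows, before any prism condition against $\beta$ is imposed; and the same example defeats the paper's own one-line proof at the identical spot, since the unproved assertion there is that $B_0\to B_0'\xrightarrow{\beta}B_1'\to\cok(B_1\to B_1')$ vanishes, whereas in the example it sends $(0,1)$ to a nonzero class. So what you isolated is not a verification you ran out of room for, but the precise point where the lemma needs hypotheses beyond monicity (for instance $\mathrm{Hom}(A_0,\cok(M_1\to M_1'))=0$, or the additional structure available where the lemma is invoked in Section 3).
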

\begin{proof}
From the commutative diagram (\ref{commu}), we have a commutative diagram as follows
\[\xymatrix{ M_0 \ar@{->}[drr] \ar@{>->}[d] \ar[r] & B_0 \ar@{>->}[d]\\
M_0' \ar@{->}[drr] \ar@{->>}[d] \ar[r] & B_0' \ar@{->}[drr] \ar@{->>}[d] &M_1 \ar@{>->}[d] \ar[r] & B_1 \ar@{>->}[d]\\
\cok_0 \ar@{->}[drr] \ar@{=}[r] & \cok_0\ar@{->}[drr] & M_0' \ar@{->>}[d] \ar[r] & B_0' \ar@{->>}[d] \\
&& \cok_1\ar@{=}[r] &\cok_1 }.\]
It induces a morphism $B_0\rightarrow B_1$ filling into the diagram (\ref{commu}) such that the whole diagram commutes.
\end{proof}
Let $A$ be a $\C$-algebra with an ideal $I$, and let $\pi:X\rightarrow S$ be a smooth projective morphism where $S=\Spec (A)$. Denote by $S_0$ the affine scheme $\Spec (A_0)$ where $A_0=A/I$. Suppose that $X_0$ is the pull back of $X/S$ along $S_0\hookrightarrow S$. Denote the natural immersion $X_0\hookrightarrow X$ by $i$.
\[\xymatrix{ X_0 \ar[d]_{\pi_0} \ar@{}[rd]|-{\Box} \ar@{^{(}->}[r]^i &X \ar[d]^{\pi} \\
					S_0 	 \ar@{^{(}->}[r] &S }\]
Let  $K_{X/S/\C}\in \Ext^1_X(\Omega^1_{X/S},\Omega^1_{S/\C}\otimes \OO_X)$
be the Kodaira-Spencer class of $X/S$, i.e., the class of the extension
\[0\rightarrow \Omega^1_{S/\C}\otimes \OO_X\rightarrow \Omega^1_{X/\C}\rightarrow \Omega^1_{X/S}\rightarrow 0.\] Let $\beta\in \Ext^1_{X_0}(\Omega^1_{X_0/S_0},\Omega^1_{S/\C}\otimes \OO_{X_0})$ be the class of the extension $K_{X/S/\C}\otimes \OO_{S_0}$
\begin{equation} \label{beta}
0\rightarrow \Omega^1_{S/\C}\otimes \OO_{X_0}\rightarrow \Omega^1_{X/\C}|_{X_0}\rightarrow \Omega^1_{X_0/S_0}\rightarrow 0.
\end{equation}
Recall that $\Ho^q _{\DR}(X/S)$ is the $q$-th de Rham cohomology given by $\R^q\pi_*(\Omega^{\bullet}_{X/S})$ where $\Omega^{\bullet}_{X/S}$ is the de Rham complex of $X/S$ and $\Ri^q\pi_*$ is the $q$-th derived functor of $\pi_*$. We recall  the following propositions.
\begin{prop}$($\cite[Theorem 3.2]{Bloch} and \cite{Del}$)$\label{prop0} Let S be a scheme over $\Spec (\mathbb{Q})$, and let $\pi:X\rightarrow S$ be a proper and smooth morphism. Then
\begin{enumerate} \label{thmlf}
\item The sheaves $\Ri^q \pi_*(\Omega^p_{X/S})$ are locally free of finite type and commute with base change.
\item The spectral sequence 
\[E^{p,q}_1=\Ri^q \pi_*(\Omega^p_{X/S})\Longrightarrow \Ho^{p+q}_{\DR}(X/S)\]
degenerates at $E_1$.

\item The sheaves $\Ho^*_{\DR}$ are locally free of finite type and commute with base change.

\end{enumerate}
Let $S$ be a smooth $\mathbb{C}$-scheme. There is a canonical integrable connection, namely, the Gauss-Manin connection
\[\nabla :\Ho^q_{\DR}(X/S)\rightarrow \Ho^q_{\DR}(X/S)\otimes_{\OO_S}\Omega^1_{S/\C}.\]
The spectral sequence in Proposition \ref{thmlf} (2) induces a (Hodge) filtration
\[0\subseteq \F^q \subseteq \F^{q-1}\cdots \subseteq \F^1\subseteq \F^0=H^q_{\DR}(X/S)\]
such that \begin{itemize}
\item $\F^0,\F^1,\ldots,\F^q$ are locally free $\OO_S$-module,
\item and the Griffiths's Transversality $\nabla(\F^p)\subseteq \F^{p-1}\otimes \Omega^1_{S/\C}$.
\end{itemize}
\end{prop}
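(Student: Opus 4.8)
The statement is classical and due to Deligne and Bloch; here is the route I would take. The properties (1)--(3) are ultimately numerical, so the plan is to reduce them to a fiberwise degeneration over $\C$ and then propagate the information across $S$. First I would record that, $\pi$ being proper and each $\Omega^p_{X/S}$ being locally free on $X$ and hence $S$-flat, the sheaves $\Ri^q\pi_*\Omega^p_{X/S}$ and $\Ho^q_{\DR}(X/S)=\Ri^q\pi_*\Omega^\bullet_{X/S}$ are coherent, and the Hodge--de Rham spectral sequence $E_1^{p,q}=\Ri^q\pi_*\Omega^p_{X/S}\Rightarrow\Ho^{p+q}_{\DR}(X/S)$ is at our disposal. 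On each fiber $X_s$ the spectral sequence gives $\dim_{k(s)}\Ho^n_{\DR}(X_s)\le\sum_{p+q=n}\dim_{k(s)}\Ho^q(X_s,\Omega^p_{X_s})$, with equality for all $n$ precisely when the fiberwise spectral sequence degenerates at $E_1$. After a routine reduction to the case where $S$ is of finite type over $\Q$, each residue field $k(s)$ is finitely generated over $\Q$ and hence admits an embedding $k(s)\hookrightarrow\C$. Flat base change along this embedding leaves all of $\dim_{k(s)}\Ho^q(X_s,\Omega^p_{X_s})$ and $\dim_{k(s)}\Ho^n_{\DR}(X_s)$ unchanged, so the fiberwise degeneration is equivalent to the degeneration over $\C$; there I would invoke Grothendieck's comparison and GAGA to pass to the associated compact K\"ahler manifold, where harmonic theory and the Hodge decomposition force $E_1$-degeneration. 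Thus the numerical equality above holds on \emph{every} fiber.

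To upgrade this to the relative assertions (1)--(3), I would combine the fiberwise equality with semicontinuity. Each Hodge number $h^{p,q}(s)=\dim_{k(s)}\Ho^q(X_s,\Omega^p_{X_s})$ is upper semicontinuous in $s$, while the Euler characteristics $\sum_q(-1)^q h^{p,q}(s)$ are locally constant by the $S$-flatness of $\Omega^p_{X/S}$; moreover the total derived pushforwards $\Ri\pi_*\Omega^p_{X/S}$ and $\Ri\pi_*\Omega^\bullet_{X/S}$ are perfect complexes on $S$, so the fiberwise de Rham Betti numbers are controlled as well. The point is to extract from the fiberwise degeneration that every $h^{p,q}$ is in fact locally constant; the theorem of cohomology and base change (Grauert) then shows each $\Ri^q\pi_*\Omega^p_{X/S}$ is locally free and commutes with base change, and a descending induction along the stupid filtration $\sigma_{\geq p}$ transports local freeness and base-change compatibility to $\Ho^q_{\DR}(X/S)$ while yielding degeneration of the relative spectral sequence. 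This interlocking of local freeness and degeneration---forcing both to hold simultaneously rather than only on a dense open---is the step I expect to be the main obstacle, and it is exactly where the constancy of the de Rham Euler characteristic and the perfectness of the complexes must be used in tandem with the semicontinuity of the $h^{p,q}$.

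For the last part I would construct the Gauss--Manin connection over the smooth $\C$-scheme $S$ by the Katz--Oda recipe. Filtering $\Omega^\bullet_{X/\C}$ by the subcomplexes $L^i=\mathrm{Im}\bigl(\pi^*\Omega^i_{S/\C}\otimes\Omega^{\bullet-i}_{X/\C}\to\Omega^\bullet_{X/\C}\bigr)$ one finds $\mathrm{gr}^i_L\cong\pi^*\Omega^i_{S/\C}\otimes\Omega^{\bullet-i}_{X/S}$, and the first differential $d_1\colon E_1^{0,q}\to E_1^{1,q}$ of the associated spectral sequence is precisely the map $\nabla\colon\Ho^q_{\DR}(X/S)\to\Ho^q_{\DR}(X/S)\otimes\Omega^1_{S/\C}$; its integrability $\nabla^2=0$ is automatic, being the assertion that $\nabla$ is induced by the genuine complex differential. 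Griffiths transversality $\nabla(\F^p)\subseteq\F^{p-1}\otimes\Omega^1_{S/\C}$ I would verify by a local computation: the Hodge filtration is induced by $\sigma_{\geq p}$, and applying the absolute differential of $X/\C$ to a relative form lying in $\F^p$ and projecting onto the $L^1$-graded piece lowers the relative filtration level by exactly one. Finally, the local freeness of the pieces $\F^0,\F^1,\ldots,\F^q$ follows from (2), since the graded quotients $\F^p/\F^{p+1}\cong\Ri^{q-p}\pi_*\Omega^p_{X/S}$ are locally free by (1), completing the argument.
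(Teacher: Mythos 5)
This proposition is not proved in the paper at all: it is imported verbatim from the literature (Bloch, \emph{Semi-regularity and de Rham cohomology}, Theorem 3.2, and Deligne's d\'eg\'en\'erescence paper), so there is no internal proof to compare against; your attempt has to be judged against the standard argument those sources give. Your first and third paragraphs are essentially sound: the reduction of fiberwise degeneration to $\C$ via finitely generated subfields, and the Katz--Oda construction of the Gauss--Manin connection, the identification $\nabla = d_1$ for the filtration $L^\bullet$ of $\Omega^\bullet_{X/\C}$, and the local computation giving Griffiths transversality, are all the standard route. (One caveat even there: for a \emph{proper} non-projective smooth fiber you cannot ``pass to the associated compact K\"ahler manifold'' --- a smooth proper $\C$-variety is Moishezon, and a non-projective Moishezon manifold is never K\"ahler; the proper case needs Chow's lemma plus cohomological descent, or the Deligne--Illusie characteristic-$p$ argument.)

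The genuine gap is in your second paragraph, and you half-admit it yourself by calling it ``the step I expect to be the main obstacle'': the tools you propose --- upper semicontinuity of the $h^{p,q}$, local constancy of Euler characteristics, perfectness of $R\pi_*\Omega^p_{X/S}$ and $R\pi_*\Omega^\bullet_{X/S}$ --- cannot by themselves force the $h^{p,q}$ to be locally constant. All of these constraints are one-sided or alternating: they permit $\dim \Ho^n_{\DR}(X_s)$ and several Hodge numbers to jump \emph{up} simultaneously at a special point while every fiberwise spectral sequence still degenerates and every Euler characteristic stays constant. What is missing is a \emph{lower} bound, i.e.\ a rigidity input, and this is precisely the engine of the classical proof: either (a) the theorem of Katz/Bloch that a finite module with integrable connection over $k[[t_1,\dots,t_r]]$, $\chari k=0$, is free --- which is exactly Proposition \ref{prop2} of this paper and yields, over Artinian local $\Q$-algebras $A$, the isomorphism $\Ho^*_{\DR}(X_A/A)\cong \Ho^*_{\DR}(X_0/k)\otimes_k A$, so that a length count $\mathrm{length}(A)\, b_n = \mathrm{length}\,\Ho^n_{\DR}(X_A/A) \le \sum_{p+q=n}\mathrm{length}\,\Ho^q(X_A,\Omega^p) \le \mathrm{length}(A)\sum_{p+q=n} h^{p,q}(0) = \mathrm{length}(A)\, b_n$ forces degeneration, freeness and base change over $A$, followed by the standard d\'evissage to a general noetherian base; or (b) over $\C$, the topological local constancy of Betti numbers (Ehresmann) combined with Grothendieck's comparison. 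Relatedly, your plan to get relative degeneration from fiberwise degeneration by ``transporting'' along the stupid filtration fails over non-reduced bases (which this paper crucially needs, e.g.\ $S_N=\Spec(\C[t]/(t^{N+1}))$): a map of locally free sheaves vanishing at every point need not vanish --- multiplication by $\epsilon$ on $\C[\epsilon]$ is the basic counterexample --- so the vanishing of the relative $d_r$ must come from the length-counting argument, not from pointwise vanishing. Until the rigidity ingredient is put in, the central implication of the proposition is unproved.
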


\begin{prop} $($\cite[Proposition 3.6]{Bloch} and \cite{Griff}\label{prop1}$)$
With the same notations as above, the Gauss-Manin connection $\nabla$ is related to the Kodaira-Spencer class $K_{X/S/\C}$ by the following commutative diagram
\[\xymatrix{\F^p/\F^{p+1}\ar@{=}[d]\ar[rr]^{\nabla} &&\F^{p-1}/\F^{p}\otimes \Omega^1_{S/\C} \ar@{=}[d]\\
\Ri^{q-p}\pi_*(\Omega^p_{X/S}) \ar[rr]^<<<<<<<<<<{\cup K_{X/S/\C}} &&\Ri^{q-p+1}\pi_*(\Omega^{p-1}_{X/S})\otimes \Omega^1_{S/\C}}.\]
\end{prop}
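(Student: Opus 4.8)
The plan is to compute the Gauss--Manin connection through its algebraic (Katz--Oda) description and then read off its effect on the Hodge-graded pieces. First I would filter the absolute de Rham complex $\Omega^\bullet_{X/\C}$ by the number of horizontal directions: let $L^i\Omega^n_{X/\C}$ be the image of $\pi^*\Omega^i_{S/\C}\otimes_{\OO_X}\Omega^{n-i}_{X/\C}$ in $\Omega^n_{X/\C}$. This is a decreasing filtration by subcomplexes with $L^i/L^{i+1}\cong\pi^*\Omega^i_{S/\C}\otimes_{\OO_X}\Omega^{\bullet-i}_{X/S}$, the differential on the graded piece being the relative one. Truncating at $L^2$ produces the short exact sequence of complexes
\[
0\to \pi^*\Omega^1_{S/\C}\otimes_{\OO_X}\Omega^{\bullet-1}_{X/S}[-1]\to \Omega^\bullet_{X/\C}/L^2\to \Omega^\bullet_{X/S}\to 0,
\]
and the Gauss--Manin connection $\nabla$ is, up to sign, the connecting homomorphism of the associated long exact sequence of $\Ri^\bullet\pi_*$, after identifying $\Ri^{q+1}\pi_*\big(\pi^*\Omega^1_{S/\C}\otimes\Omega^\bullet_{X/S}[-1]\big)$ with $\Omega^1_{S/\C}\otimes\Ho^q_{\DR}(X/S)$ via the projection formula for the locally free sheaf $\Omega^1_{S/\C}$.

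Next I would overlay the Hodge (stupid) filtration $\sigma^{\geq p}$ on each term. On $\Omega^\bullet_{X/S}$ it induces the Hodge filtration $\F^p$ of Proposition \ref{prop0}; on the left-hand term the compatible filtration is $\pi^*\Omega^1_{S/\C}\otimes\sigma^{\geq p-1}\Omega^\bullet_{X/S}[-1]$, the shift by one reflecting that a class of absolute degree $n$ with one horizontal direction has relative degree $n-1$. A class in $\F^p$ lifts to the middle term by an absolute form of degree $\geq p$, and applying the absolute differential $d$ produces a representative of its image whose horizontal component has relative degree $\geq p-1$; this is exactly Griffiths transversality $\nabla(\F^p)\subseteq \F^{p-1}\otimes\Omega^1_{S/\C}$ together with the claim that the boundary map respects the two filtrations. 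Because the Hodge--de Rham spectral sequence degenerates at $E_1$ (Proposition \ref{prop0}(2)), the filtration is strict, so passing to $\sigma$-graded pieces is exact and the induced map $\F^p/\F^{p+1}\to (\F^{p-1}/\F^p)\otimes\Omega^1_{S/\C}$ is the graded piece of $\nabla$.

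It then remains to identify this graded connecting homomorphism with cup product against $K_{X/S/\C}$. On graded pieces the sequence above becomes, in degree $p$, the two-step truncation
\[
0\to \pi^*\Omega^1_{S/\C}\otimes\Omega^{p-1}_{X/S}\to \Omega^p_{X/\C}/L^2\Omega^p_{X/\C}\to \Omega^p_{X/S}\to 0,
\]
which is the $(p-1)$-st exterior power of the fundamental cotangent extension
\[
0\to\pi^*\Omega^1_{S/\C}\to\Omega^1_{X/\C}\to\Omega^1_{X/S}\to 0
\]
wedged into $\Omega^p_{X/S}=\wedge^p\Omega^1_{X/S}$; its class in $\Ext^1_X(\Omega^p_{X/S},\pi^*\Omega^1_{S/\C}\otimes\Omega^{p-1}_{X/S})$ is precisely the image of $K_{X/S/\C}$ under the wedge action, which is the definition of cup product with $K_{X/S/\C}$. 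Since for a short exact sequence of sheaves cup product with the extension class coincides with the connecting homomorphism, applying $\Ri^{q-p}\pi_*$ and the projection formula yields $\nabla|_{\F^p/\F^{p+1}}=\,\cup\,K_{X/S/\C}$, giving the commutativity of the square.

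The main obstacle is the last identification at the cochain level: one must check that the boundary differential extracted from the absolute $d$, restricted to the graded pieces, reproduces the exterior-power action of the Kodaira--Spencer extension on the nose (including the shift $[-1]$ and the correct sign), rather than merely up to a scalar. I expect to do this by a local computation on an affine open where the cotangent sequence splits: choosing a splitting, expanding $d$ by the Leibniz rule, and comparing the resulting hypercohomology cocycle for the connecting map with the Yoneda cocycle representing $\cup\,K_{X/S/\C}$. Once this local matching is in place, every other step is formal, relying only on the $E_1$-degeneration and base-change statements of Proposition \ref{prop0}.
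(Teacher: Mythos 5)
Your proof is correct, and it is essentially the same argument as the one the paper relies on: the paper gives no proof of this proposition, quoting it from \cite[Proposition 3.6]{Bloch} and \cite{Griff}, and in that source the Gauss--Manin connection is defined precisely as the connecting homomorphism of your Katz--Oda sequence, the stupid filtration is overlaid, $E_1$-degeneration is used to pass to graded pieces, and the degree-$p$ graded sequence is identified (by the same local computation you propose) with the wedge image of the Kodaira--Spencer extension, so that the connecting map becomes cup product with $K_{X/S/\C}$. One purely notational slip: in your first display the subcomplex should be written either as $\pi^*\Omega^1_{S/\C}\otimes_{\OO_X}\Omega^{\bullet-1}_{X/S}$ or as $\pi^*\Omega^1_{S/\C}\otimes_{\OO_X}\Omega^{\bullet}_{X/S}[-1]$, but not with both shifts at once.
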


Note that we have the following proposition about integrable connections and stratifications.
\begin{prop}$($\cite[Proposition 3.7, Proposition 3.8, Remark 3.9]{Bloch}$)$ \label{prop2}
\begin{enumerate}
\item Let $k$ be a field of characteristic $0$, $M$ a finite $k[[t_1,\ldots,t_r]]$-module with integrable connection $\nabla$. Let $M^{\nabla}=\Ker(\nabla)$. Then $M=M^{\nabla}\otimes_k k[[t_1,\ldots,t_r]]$.
\item Let $A$ be a complete, local, augmented $\C$-algebra (e.g. $A$ artinian), $S=\Spec(A)$, and $X_0\subseteq X$ be the closed fiber. Then $$\Ho^*_{\DR}(X/S)\cong \Ho^*(X_0,\C)\otimes_{\C}A.$$ It gives a stratification on $\Ho^*_{\DR}(X/S).$ Cohomology classes of the form $$c\otimes 1, c\in \Ho^*(X_0,\C)$$ are said to be horizontal.

\end{enumerate}
\end{prop}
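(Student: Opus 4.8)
\emph{Proof proposal.} The plan is to prove the two parts in order and to derive (2) from (1), using (1) as a formal trivialization engine. For part (1), I would first repackage the integrable connection as commuting $k$-linear operators $\nabla_1,\dots,\nabla_r$ on $M$, where $\nabla_i$ is the covariant derivative along $\partial_{t_i}$; integrability is precisely $\nabla_i\nabla_j=\nabla_j\nabla_i$, so for a multi-index $\alpha\in\mathbb{N}^r$ the operator $\nabla^{\alpha}=\nabla_1^{\alpha_1}\cdots\nabla_r^{\alpha_r}$ is well defined. Writing $\mathfrak m=(t_1,\dots,t_r)$, I would then introduce the formal parallel transport operator
\[ P(m)=\sum_{\alpha\in\mathbb{N}^r}\frac{(-1)^{|\alpha|}}{\alpha!}\,t^{\alpha}\,\nabla^{\alpha}(m), \]
which converges in the $\mathfrak m$-adic topology because $t^{\alpha}\nabla^{\alpha}(m)\in\mathfrak m^{|\alpha|}M$ and $M$, being finite over the complete local ring $k[[t_1,\dots,t_r]]$, is $\mathfrak m$-adically complete and separated. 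A direct computation with the Leibniz rule and integrability shows $\nabla_j P(m)=0$ for every $j$, the two resulting series telescoping against one another, so $P$ maps $M$ into $M^{\nabla}$; since $\nabla^{\alpha}(m)=0$ for $|\alpha|\geq1$ when $m$ is horizontal, $P$ restricts to the identity on $M^{\nabla}$, and $P(m)\equiv m \pmod{\mathfrak m M}$.

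From here I would finish with commutative algebra. The operator $P$ shows that $M$ is the sum of the $k[[t]]$-submodule generated by $M^{\nabla}$ and $\mathfrak m M$, so Nakayama's lemma gives that $M^{\nabla}$ generates $M$; lifting a $k$-basis of $M/\mathfrak m M$ to horizontal elements $e_1,\dots,e_n\in M^{\nabla}$, Nakayama again shows they generate $M$. Their $k[[t]]$-linear independence follows from a minimal-order argument: any nonzero relation $\sum f_i e_i=0$ is preserved by each $\partial_{t_j}$, because horizontality gives $\nabla_j(\sum f_i e_i)=\sum\partial_{t_j}(f_i)e_i$, and differentiating the lowest-order term lowers its order, contradicting minimality unless the relation has order $0$, which contradicts independence of the $\bar e_i$ in $M/\mathfrak m M$. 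Thus $M$ is free with horizontal basis and $M\cong M^{\nabla}\otimes_k k[[t_1,\dots,t_r]]$. I expect the genuine content, and the main obstacle, to be exactly this step: it uses characteristic $0$ twice, once to invert the factorials in $P$ and once to conclude that $\partial_{t_j}f=0$ for all $j$ forces $f\in k$, and both fail in characteristic $p$. This is precisely why the $p$-adic half of the paper must substitute the rigidity of crystalline cohomology for this formal integration.

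For part (2), I would set $M=\Ho^q_{\DR}(X/S)$ and feed it into (1) through the Gauss--Manin connection. By Proposition \ref{prop0}, applicable since $A$ is a $\C$- and hence $\Q$-algebra, $M$ is finite free over $A$, commutes with base change, and carries the integrable Gauss--Manin connection $\nabla$ computed by the Kodaira--Spencer class as in Proposition \ref{prop1}; base change along the augmentation $A\to\C$ identifies $M\otimes_A\C$ with $\Ho^q_{\DR}(X_0/\C)\cong\Ho^q(X_0,\C)$. When $A=\C[[t_1,\dots,t_r]]$ this is literally the setting of (1), which yields $M\cong M^{\nabla}\otimes_{\C}A$ together with the identification $M^{\nabla}\xrightarrow{\sim}M\otimes_A\C\cong\Ho^q(X_0,\C)$, and the horizontal classes $c\otimes1$ furnish the asserted stratification. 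For a general complete local augmented $\C$-algebra $A$ I would reduce to this case by writing $A=\varprojlim A/\mathfrak m^{n+1}$ and assembling the isomorphisms over the Artinian quotients in the limit, using $\mathfrak m$-adic completeness of both sides. The delicate point here is that $\Spec A$ is not smooth, so the connection and the stratification must be read off the closed fiber; conceptually this is the statement that in characteristic $0$ the de Rham cohomology of the smooth family $X/\Spec A$ is computed by Grothendieck's infinitesimal cohomology of $X_0$, which depends only on $X_0$ and hence is constant along the nilpotent thickening $\Spec\C\hookrightarrow\Spec A$. Once the stratification is in hand, the horizontal sections $\Ho^*(X_0,\C)\otimes1$ give the desired trivialization $\Ho^*_{\DR}(X/S)\cong\Ho^*(X_0,\C)\otimes_{\C}A$.
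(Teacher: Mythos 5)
The paper itself offers no proof of Proposition \ref{prop2}: it is quoted directly from Bloch (Propositions 3.7, 3.8 and Remark 3.9 of \cite{Bloch}), so your attempt can only be measured against the cited source. Your part (1) is correct and is essentially the classical argument found there: the formal parallel-transport operator $P(m)=\sum_{\alpha}\frac{(-1)^{|\alpha|}}{\alpha!}\,t^{\alpha}\nabla^{\alpha}(m)$ converges by $\mathfrak{m}$-adic completeness of a finite module over a complete local ring, lands in $M^{\nabla}$ by integrability, and is the identity modulo $\mathfrak{m}M$; Nakayama plus your minimal-order argument then produce a horizontal basis, and the two appeals to characteristic $0$ are exactly where you locate them.

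The gap is in part (2). Writing $A=\varprojlim A/\mathfrak{m}^{n+1}$ does not reduce the general case to the power series case: it reduces it to the Artinian quotients $A_n=A/\mathfrak{m}^{n+1}$, and the Artinian case is precisely where the mechanism of part (1) cannot be applied. An Artinian augmented $\C$-algebra is not formally smooth over $\C$ and its differentials are torsion (in characteristic $0$ one has $\Omega^1_{(\C[t]/(t^{N+1}))/\C}\cong(\C[t]/(t^{N}))\,dt$), so there is no integrable Gauss--Manin connection on $\Ho^*_{\DR}(X/\Spec A_n)$ relative to $\C$ to feed into part (1); nor can one in general lift $X/A_n$ to a smooth family over $\C[[t_1,\ldots,t_r]]$ so as to descend the power series case, since such liftings are obstructed in $\Ho^2(X_0,T_{X_0})$. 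Your closing sentence --- invariance of Grothendieck's infinitesimal (crystalline, in characteristic $0$) cohomology along the nilpotent thickening $\Spec\C\hookrightarrow\Spec A$ --- is not a conceptual gloss but the actual proof, and it has to be executed: both $X$ and the constant family $X_0\times_{\C}S$ are smooth lifts of $X_0$ over $S$, so the comparison theorem gives $\Ho^*_{\DR}(X/S)\cong\Ho^*_{\mathrm{inf}}(X_0/S)\cong\Ho^*_{\DR}(X_0\times_{\C}S/S)=\Ho^*(X_0,\C)\otimes_{\C}A$, and the stratification and the horizontal classes $c\otimes 1$ are read off from this crystal structure rather than from a connection. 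Once this is done it supersedes your route through part (1) entirely, since it treats $\C[[t_1,\ldots,t_r]]$ and the Artinian quotients uniformly; the connection-theoretic argument only ever covers the formally smooth base (where, moreover, the Gauss--Manin connection of Proposition \ref{prop0} must itself be extended from smooth $\C$-schemes to $\Spec\C[[t_1,\ldots,t_r]]$ via continuous differentials). As written, part (2) is established only for $A=\C[[t_1,\ldots,t_r]]$.
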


\section{Deformations of Automorphisms for Algebraic Manifolds} \label{deforms}




In this section, we use the notations as in Section \ref{homoalg}. Let $g_0$ be an automorphism of $X_0$ over $S_0=\Spec(A_0)$.
\begin{lemm} \label{lemmlift}
Let $A$ be a $\C$-algebra with square zero ideal $\II$, and let $A_0$ be $A/\II$. Denote by $S$ (resp. $S_0$) $\Spec(A)$ (resp. $\Spec(A_0)$). Suppose that the natural map $d:\II \rightarrow \Omega^1_{S/\C}\otimes \OO_{S_0}$ is injective and $g_0^*\beta=\beta$, see (\ref{beta}) for the definition of $\beta$. Then the automorphism $g_0$ is unobstructed, i.e., one can extend $g_0$ to an automorphism \[g:X/A\rightarrow X/A\] over $\Spec (A)$. 
\end{lemm}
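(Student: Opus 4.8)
The plan is to read the statement as a deformation-theoretic lifting problem and to reduce the existence of $g$ to the vanishing of a single obstruction class, which the hypothesis $g_0^*\beta=\beta$ will kill. First I would record that the closed immersion $i:X_0\hookrightarrow X$ is a square-zero thickening with ideal sheaf $\II\OO_X$, which by flatness of $\pi$ is identified with $\II\otimes_{A_0}\OO_{X_0}$; extending $g_0$ to an $S$-morphism $g:X\to X$ is then the same as extending the $S$-morphism $i\circ g_0:X_0\to X$ along this thickening, the target $\pi:X\to S$ being smooth. A lift, once produced, is automatically an automorphism: its reduction $g_0$ is one and the thickening is nilpotent, so one either invokes the standard fact that a morphism over $S$ lifting an isomorphism across a nilpotent immersion is an isomorphism, or constructs a lift of $g_0^{-1}$ by the same argument and checks the two compose to the identity modulo $\II$.

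Next I would set up the obstruction. Since $\Omega^1_{X/S}$ is locally free (smoothness) and $(i\circ g_0)^*\Omega^1_{X/S}=g_0^*\Omega^1_{X_0/S_0}$, the infinitesimal lifting theory for the smooth target places the obstruction to the existence of $g$ in
\[\Ho^1\!\big(X_0,\ \mathcal{H}om(g_0^*\Omega^1_{X_0/S_0},\ \II\otimes_{A_0}\OO_{X_0})\big)\ \cong\ \Ho^1\!\big(X_0,\ T_{X_0/S_0}\otimes_{A_0}\II\big),\]
using that $g_0$ is an isomorphism and that $\II$ comes from the base. The crux of the proof is to identify this obstruction with the difference $g_0^*\beta-\beta$ of Kodaira--Spencer classes. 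Conceptually, $\beta$ and $g_0^*\beta$ are the extension classes of the two square-zero deformations of $X_0$ recorded by $X/S$ and by its $g_0$-pullback; these deformations agree over $S_0$, so their difference is exactly the obstruction to matching them up by $g_0$, and it lives natively in the group displayed above. The map $d:\II\to\Omega^1_{S/\C}\otimes\OO_{S_0}$ is what transports this difference into the group where $\beta$ lives, and here the hypothesis that $d$ is injective is essential: it guarantees that $g_0^*\beta=\beta$ --- an equality in the larger, $\Omega^1_{S/\C}$-valued group --- forces the native, $\II$-valued obstruction class to vanish.

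At the technical level I would carry out this identification by comparing the short exact sequence (\ref{beta}) with its $g_0$-pullback. The hypothesis $g_0^*\beta=\beta$ provides an isomorphism of these two extensions, and Lemmas \ref{lemcom1} and \ref{lemcom2} are precisely the homological tools needed to fill in the compatible middle morphism $\varphi:\Omega^1_{X/\C}|_{X_0}\to\Omega^1_{X/\C}|_{X_0}$ that restricts to $dg_0$ on $\Omega^1_{X_0/S_0}$ and to the identity on $\Omega^1_{S/\C}\otimes\OO_{X_0}$; the injectivity of $d$ supplies exactly the monomorphism hypotheses ($M_0\hookrightarrow M_0'$, $M_1\hookrightarrow M_1'$) required by Lemma \ref{lemcom2}. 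The main obstacle --- and the step I would budget the most care for --- is the final conversion of this sheaf-level datum $\varphi$ back into an honest scheme morphism $g:X\to X$ over $S$: one must check that $\varphi$ is compatible with the de Rham differential, so that it integrates, across the square-zero thickening, to a ring homomorphism, and that the cocycle obstructing the global existence of such a $g$ is exactly the class $g_0^*\beta-\beta$ computed above. Granting this identification, $g_0^*\beta=\beta$ gives a vanishing obstruction, the lift $g$ exists, and by the first paragraph it is an automorphism.
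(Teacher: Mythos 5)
Your overall architecture matches the paper's --- compare the extension (\ref{beta}) with its $g_0$-pullback, invoke Lemmas \ref{lemcom1} and \ref{lemcom2}, and let injectivity of $d$ carry the weight --- but the step on which your argument actually rests is carried out at the level of $\Ext$-classes, and there it has a genuine gap. You assert that, because $d$ is injective, the equality $g_0^*\beta=\beta$ (an equality in the $\Omega^1_{S/\C}$-valued group) forces the native, $\II$-valued obstruction class to vanish. This inference requires injectivity of the induced map
\[
d_*:\Ext^1_{X_0}\bigl(\Omega^1_{X_0/S_0},\II\otimes\OO_{X_0}\bigr)\longrightarrow
\Ext^1_{X_0}\bigl(\Omega^1_{X_0/S_0},\Omega^1_{S/\C}\otimes\OO_{X_0}\bigr),
\]
and that does \emph{not} follow from injectivity of $d$: writing $0\to\II\to\Omega^1_{S/\C}\otimes\OO_{S_0}\to\cok(d)\to0$ and tensoring with the $A_0$-flat sheaf $\OO_{X_0}$, the long exact sequence shows that $\ker(d_*)$ is the image of $\mathrm{Hom}(\Omega^1_{X_0/S_0},\cok(d)\otimes\OO_{X_0})\cong\Ho^0(X_0,T_{X_0/S_0}\otimes\cok(d))$. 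This is nonzero in general; indeed, in the very situation where the lemma is applied inside the proof of Theorem \ref{thmdeform} one has $A=\C[t]/(t^{N+2})$, for which $\cok(d)\cong(\C[t]/(t^{N}))\,dt\neq0$ once $N\geq1$, so your reduction is inconclusive whenever $X_0$ admits nonzero relative vector fields --- a possibility that the infinitesimal Torelli hypothesis does not exclude (abelian varieties satisfy it and are full of vector fields).

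The paper's proof is organized precisely so as never to form this difference class: it stays at the level of the extensions themselves. The universal derivation embeds the ring extension $0\to\II\OO_X\to\OO_X\to i_*\OO_{X_0}\to0$ into the pullback $B$ of the $\beta$-extension along $i_*(d)$ (diagram (\ref{ex1})), and similarly the $g_0$-side into $D$ (diagram (\ref{ex2})); the hypothesis $g_0^*\beta=\beta$ together with Lemma \ref{lemcom1} produces $u:B\to D$ (diagram (\ref{ex3})), and Lemma \ref{lemcom2} --- this is where the monomorphism hypotheses, i.e. injectivity of $d\otimes 1$, enter --- restricts $u$ to the required map $h:\OO_X\to(g_0^{-1})_*\OO_X$ filling (\ref{ex0}), which is the lift. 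Note also that your sketch misassigns the two lemmas: no lemma is needed to produce the middle morphism $\varphi$ of the $\beta$-extensions, since the existence of such an isomorphism is exactly what the class equality $g_0^*\beta=\beta$ means; the lemmas are the device that converts $\varphi$ into a map of structure sheaves, i.e. they carry out precisely the ``integration'' step that you single out as the main obstacle and leave open. Since your proposal resolves that step only by appeal to the class-level vanishing criticized above, the essential content of the proof is missing.
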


\begin{proof}

To extend $g_0$ to a morphism $g$ over $\Spec (A)$, it suffices to find a morphism $h: \OO_X \rightarrow (g_0^{-1})_*(\OO_X)$ of sheaves of rings such that it fills into the following diagram
\begin{equation} \label{ex0}
\xymatrix{0 \ar[r] &\II \OO_{X} \ar[r] \ar[d] &\OO_X \ar[d]^{h} \ar[r] &i_*\OO_{X_0}\ar[r] \ar[d]^{(g_0^{-1})^*} &0\\
0 \ar[r] &\II (g^{-1}_{0})_*\OO_{X} \ar[r] &(g^{-1}_{0})_*\OO_{X}\ar[r] &(g^{-1}_{0})_*\OO_{X_0}\ar[r] &0}
\end{equation}
where we abuse $(g_0^{-1})_*$ to represent $i_*\circ (g_0^{-1})_*$ and $i$ is the natural inclusion $X_0 \hookrightarrow X$. 

In fact, we have a commutative diagram
\[\xymatrix{0 \ar[r] &\II \OO_X (=\II\otimes \OO_{X_0})\ar[d]^{d\otimes 1} \ar[r]&\OO_{X} \ar[d]|-{\pi\circ d} \ar@{}[dr] \ar[r] &i_*\OO_{X_0} \ar[d]|-{i_*(d)}\ar[r] &0\\
0 \ar[r] &i_*(\Omega^1_{S/\mathbb{C}}\otimes \OO_{X_0}) \ar[r] &i_*(\Omega^1_{X/\mathbb{C}}|_{X_0})\ar[r] &i_*\Omega^1_{X_0/S_0}\ar[r] &0}\]where $\pi$ is the quotient $\Omega^1_{X/\C}\rightarrow  \Omega^1_{X/\C}|_{X_0}$. We denote the first exact sequence in the previous diagram by $\alpha$. Recall that $\beta$ (see Section \ref{homoalg}) is the class of the extension
\[0\rightarrow \Omega^1_{S/\C}\otimes \OO_{X_0}\rightarrow \Omega^1_{X/\C}|_{X_0}\rightarrow \Omega^1_{X_0/S_0}\rightarrow 0.\]We have that \[(i_*(d))^*(i_*(\beta))= (d\otimes 1)_* (\alpha).\]We hope it will cause no confusion if we also denote $\beta$ by $i_*(\beta)$. It gives rise to the following diagram

\begin{equation}\label{ex1}
\xymatrix{ &0 \ar[r] &\II \OO_{X} \ar@{->}[ddl] \ar[dl]_{d\otimes 1} \ar[r] &\OO_X \ar[dl] \ar@{->}[ddl]|->>>>>>>{\pi \circ d} \ar[r] &i_*\OO_{X_0}\ar[r] \ar@{=}[dl] \ar@{->}[ddl] &0\\
0 \ar[r] &i_*(\Omega^1_{S/\mathbb{C}}\otimes \OO_{X_0})\ar@{=}[d] \ar[r]|-{\hole} &B \ar[d] \ar@{}[dr] \ar[r]|-{\hole} &i_*\OO_{X_0} \ar[d]|-{i_*(d)}\ar[r]|-{\hole} &0\\
0 \ar[r] &i_*(\Omega^1_{S/\mathbb{C}}\otimes \OO_{X_0}) \ar[r] &i_*(\Omega^1_{X/\mathbb{C}}|_{X_0})\ar[r] &i_*\Omega^1_{X_0/S_0}\ar[r] &0}
\end{equation}
where the middle short exact sequence is represented by $(i_*(d))^*(\beta)$. Note that $g_{0*}=(g_0^{-1})^*$, $g_0^*=(g_0^{-1})_*$ and the bottom of (\ref{ex1}) consists of $\OO_{X_0}$-modules.
We can pull back the diagram above via $g_0^*$.  It gives rise to a commutative diagramm as (\ref{ex1}), 
\begin{equation}\label{ex2}
\xymatrix{ 0 \ar[r] &\II (g^{-1}_{0})_*\OO_{X} \ar@{->}@/^5.5pc/[dd] \ar[d]_{d\otimes 1} \ar[r] &(g^{-1}_{0})_*\OO_{X} \ar[d] \ar@{->}@/^2.8pc/[dd]|->>>>>>>{\pi \circ d} \ar[r] &(g^{-1}_{0})_*\OO_{X_0}\ar[r] \ar@{=}[d] \ar@{->}@/^3.2pc/[dd] &0\\
0 \ar[r] &g_0^*(\Omega^1_{S/\mathbb{C}}\otimes \OO_{X_0})\ar@{=}[d] \ar[r]|-{\hole} &D \ar[d] \ar@{}[dr] \ar[r]|-{\hole} &(g^{-1}_{0})_*\OO_{X_0} \ar[d]|-{   (g^{-1}_{0})_*(d)}\ar[r]|-{\hole} &0\\
0 \ar[r] &g_0^*(\Omega^1_{S/\mathbb{C}}\otimes \OO_{X_0})\ar@{=}[d] \ar[r] &(i\circ g_0)_*(\Omega^1_{X/\mathbb{C}}|_{X_0^{g_0}})\ar[r]\ar@{=}[d] &g_0^*\Omega^1_{X_0/S_0}\ar[r] \ar@{=}[d] &0\\
0\ar[r]& i_*(\Omega^1_{S/\mathbb{C}}\otimes \OO_{X_0})\ar[r] & C \ar[r]&i_*(\Omega^1_{X_0/S_0})\ar[r] &0 }
\end{equation}
where $X_0^{g_0}$ is $\xymatrix{X_0 \ar@/^1pc/[rr]^{i\circ g_0} \ar[r]_{g_0} &X_0 \ar[r]_i &X}$ and we abuse $g_0^*$ to represent $i_*\circ g^*_0$.
Note that the pull-back $g_0^*$ is given by
\[\xymatrix{\Ext^1(\Omega^1_{X_0/S_0},\Omega^1_{S/\C}\otimes \OO_{X_0}) \ar[r] \ar[dr]_{g_0^*}
 &\Ext^1(g_0^*\Omega^1_{X_0/S_0},g_0^*(\Omega^1_{S/\C}\otimes \OO_{X_0}))\ar@{=}[d]\\
 &\Ext^1(\Omega^1_{X_0/S_0},\Omega^1_{S/\C}\otimes \OO_{X_0})
 }\] 
where the vertical identification follows from the differential map $$dg_0: g_0^*\Omega^1_{X_0/S_0}\xrightarrow{\cong} \Omega^1_{X_0/S_0}.$$
 The assumption $g_0^*\beta =\beta$ follows that the exact sequence at the bottom of (\ref{ex2}) is the exact sequence at the bottom of (\ref{ex1}). It follows from Lemma \ref{lemcom1} that there is a map $u:B\rightarrow D$ filling into the digram
\begin{equation} \label{ex3}
\xymatrix{
0 \ar[r] &i_*(\Omega^1_{S/\mathbb{C}}\otimes \OO_{X_0})\ar[d] \ar[r] &B \ar[d]^u \ar[r] &i_*\OO_{X_0} \ar[d]^{(g_0^{-1})_*} \ar[r] &0\\
0 \ar[r] &g_0^*(\Omega^1_{S/\mathbb{C}}\otimes \OO_{X_0})\ar[r] &D \ar[r] &(g^{-1}_{0})_*\OO_{X_0} \ar[r]&0
 }
 \end{equation}
such that the whole diagram commutes.

Recall that $d:\II \rightarrow \Omega^1_{S/\C}\otimes \OO_{S_0}$ is injective and $X\rightarrow S$ is smooth. It implies that the maps $d\otimes 1$ in (\ref{ex1}) and (\ref{ex2}) are injective. It follows from Lemma \ref{lemcom2} and (\ref{ex3}) that there is a map $h: \OO_X \rightarrow (g_0^{-1})_*(\OO_X)$ filling into the diagram (\ref{ex0}) above such that the whole diagram commutes. We have proved the lemma.
\end{proof}




Now we are able to prove Theorem \ref{thmdeform}.

\begin{proof}
We translate the assumption of the theorem in terms of the Gauss-Manin connection as follows, cf. Proposition \ref{prop0}, Proposition \ref{prop1} and  Proposition \ref{prop2}. By the Riemann-Hilbert correspondence, the horizontal map $h$ in Definition \ref{condition1} (cf. \ref{hodge}) induces the following commutative diagram
\begin{equation}\label{horizontal}
\xymatrix{\Ri^{m-p}\pi_*(\Omega _{X/S}^p)\ar[d]^{h_{ \C}} \ar[rr]^>>>>>>>>>>{\nabla} & & \Ri^{m-p+1}\pi_*(\Omega _{X/S}^{p-1})\otimes \Omega^1_{S/\C}\ar[d]^{h_{\C}\otimes \Id_{\Omega^1_{S/\C}}} \\
\Ri^{m-p}\pi_*(\Omega _{X/S}^p) \ar[rr]^>>>>>>>>>>{\nabla} & & \Ri^{m-p+1}\pi_*(\Omega _{X/S}^{p-1})\otimes \Omega^1_{S/\C}  } .
\end{equation} 
where $\nabla$ is given by the cup product of $K_{X/S/\C}$. We first show the theorem when $S=\Spec (\C[[t]])$. 

Let $S_N$ be $\Spec (\C[[t]]/(t)^{N+1})$. Note that the maps
\[d:(t)^N/(t)^{N+1}\rightarrow \Omega^1_{S_N/\C}\otimes \OO_{S_{N-1}}\]are injective, cf. \cite[Theorem 7.1]{Bloch}. Denote by $X_N$ the pull-back $X\times_S S_N$. Assume that $g_0$ can extend to an automorphism $g_N$ over $S_N$. We base change to $S_N$ via $S_N\subseteq S$. We have the Kodaira-Spencer class $K_{X_{N+1}/S_{N+1}/\C}$ and 
\begin{center}
 $\beta=K_{X_{N+1}/S_{N+1}/\C}\otimes \OO_{S_N}$ with $\nabla(-)=(-)\cup \beta$,
\end{center}
cf. \cite[(4.1)]{Bloch}, Proposition \ref{prop1} and Proposition \ref{prop2}. Note that $$\Omega^1_{S/\C}=\C[t]/(t)^{N+1} dt=S_N dt=\Omega^1_{S/\C}\otimes_{\OO_S} S_N$$ and $h_{\C}\otimes S_N$ is given by $g_N^*:\Ho^{m-p}(X_N,\Omega ^{p}_{X_N/S_N})\rightarrow \Ho^{m-p}(X_N,\Omega ^{p}_{X_N/S_N})$. Therefore, the commutativity of the diagram (\ref{horizontal}) ($h_{\C}\nabla =\nabla h_{\C}$) gives that 
\begin{center}
$g_N^*(-)\cup \beta= g_N^{*}(-\cup \beta)$ in $\Ho^{m-p+1}(X_N,\Omega ^{p-1}_{X_N/S_N})\otimes \Omega^1_{S_{N+1/\C}}$,
\end{center}
cf. \cite[Proposition 4.2]{Bloch}. It follows that $g_N^*(-)\cup \beta=g_N^{*}(-)\cup g_N^*(\beta)$.

We claim that the element\[g_N^{*}(\beta)-\beta \in \Ext^1(\Omega^1_{X_N/S_N,}\Omega^1_{S_{N+1}/\C}\otimes \OO_{X_N})\] is zero. If the claim holds, then it follows from Lemma \ref{lemmlift} that $g_N$ is liftable to an automorphism $g_{N+1}:X_{N+1}\rightarrow X_{N+1}$ over $S_{N+1}$. Therefore, by the Grothendieck existence theorem, we have an automorphism $\hat{g}:X\rightarrow X$ over $S$ such that $\hat{g}|_{X_0}=g_0$.

Note that $\Omega^1_{S_{N+1}/\C}=\C[t]/(t)^{N+1} dt=S_N dt$. Therefore, we have $$\Omega^1_{S_{N+1}/\C}\otimes \OO_{X_N}=\OO_{X_N}.$$ To show the claim, it suffices to show the cup product
\begin{equation}\label{injv}
\xymatrix{\Ext^1(\Omega^1_{X_N/S_N},\Omega^1_{S_{N+1}/\C}\otimes \OO_{X_N})\ar[d] \\
\bigoplus\limits_{p+q=m} \mathrm{Hom}(\Ho^p(X_N,\Omega_{X_N/S_N}^q), \Ho^{p+1}(X_N,\Omega_{X_N/S_N}^{q-1}\otimes\Omega^1_{S_{N+1}/\C}\otimes \OO_{X_N} ))\ar@{=}[d]\\
\bigoplus\limits_{p+q=m} \mathrm{Hom}(\Ho^p(X_N,\Omega_{X_N/S_N}^q), \Ho^{p+1}(X_N,\Omega_{X_N/S_N}^{q-1})\otimes\Omega^1_{S_{N+1}/\C})}
\end{equation} 
is injective. We show the injectivity of the cup product by the induction on the length of $S_N$. 

Firstly, we define two functors as follows:
\[T(M)=\Ext^1_{\OO_{X_N}}(\Omega^1_{X_N/S_N},M)\]
and \[S(M)=\bigoplus\limits_{p+q=m} \mathrm{Hom}(\Ho^p(X_N,\Omega_{X_N/S_N}^q), \Ho^{p+1}(X_N,\Omega_{X_N/S_N}^{q-1}\otimes M )) \]
from the category of $\OO_{X_N}$-modules to the category of $S_N$-modules. The cup product is a natural transformation between these two functors
\[\cup_M: T(M)\rightarrow S(M).\]The injectivity of the vertical arrow (\ref{injv}) is the same as the injectivity of \[\cup_{\OO_{X_N}}:T(\OO_{X_N})\rightarrow S(\OO_{X_N}).\] We consider the following exact sequence
\[0\rightarrow (t)^{N}\otimes \OO_{X_N}(= (t^N)\otimes_{\mathbb{C}} \OO_{X_0})\rightarrow \OO_{X_N}\rightarrow \OO_{X_{N-1}}\rightarrow 0.\]By the base change theorem \cite[Chapter III, Section 12]{H}, we have the following commutative diagram of exact sequences
\[\xymatrix{ &(t^N)\otimes_{\mathbb{C}} T(\OO_{X_0}) \ar[d]^{(t^N)\otimes_{\mathbb{C}} \cup} \ar[r] &T(\OO_{X_N})\ar[r] \ar[d]^{\cup} & T(\OO_{X_{N-1}})\ar[d]^{\cup}\\
0\ar[r] &(t^N)\otimes_{\mathbb{C}} S(\OO_{X_0}) \ar[r] &S(\OO_{X_N})\ar[r] & S(\OO_{X_{N-1}}) \ar[r] &0.
}\]The first vertical arrow $(t^N)\otimes_{\mathbb{C}} \cup$ is injective by the assumption of the theorem that $X_0$ satisfies the infinitesimal Torelli theorem of degree $m$. So the cup product $\cup_{\OO_{X_N}}$ is injective by the induction. It implies the theorem in the case $S=\Spec (\C[[t]])$.

The general case follows from the case when $S=\Spec (\C[[t]])$ by base change along $\Spec (\widehat{\OO_{S,0}})=\Spec (\C[[t]])\hookrightarrow S$.
\end{proof}

Apply Theorem \ref{thmdeform} with $h=\Id_{\mathrm{R}^m\pi_*\mathbb{Q}}$ in Definition \ref{condition1}. We obtain the following corollary. 
\begin{cor}\label{faithful}
Suppose that a smooth projective $X_0$ is a fiber of a smooth family of projective variety $\pi: X\rightarrow S$. Assume that the cup product \[\Ho^1(X_0,T_{X_0})\rightarrow \bigoplus\limits_{p+q=m} \mathrm{Hom}(\Ho^p(X_0,\Omega_{X_0}^q), \Ho^{p+1}(X_0,\Omega_{X_0}^{q-1}))\]is injective for some $m$. Then the kernel of $\Aut(X_s)\rightarrow \Aut(\mathrm{H}^m(X_s,\Q))$ is trivial for all $s\in S$ if the kernels are trivial over an open dense subset of $S$.
\end{cor}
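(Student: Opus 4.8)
The plan is to argue by contradiction, using Theorem \ref{thmdeform} to spread out a putative kernel element across the family and then to collapse it by rigidity. Fix the dense open subset $D\subseteq S$ over which the kernels are trivial, and suppose that some fiber carries a nontrivial $g_0\in\Aut(X_{s_0})$ with $\Ho^m(g_0,\Q)=\Id$. Because $\Ho^m(g_0,\Q)$ is the identity, $g_0$ is of $m$-Hodge type on $X/S$ in the sense of Definition \ref{condition1}: one takes for $h$ the identity endomorphism of the local system $\Ri^m\pi_*\Q$, whose stalk at $s_0$ is $\Ho^m(g_0,\Q)=\Id$ and which trivially preserves the Hodge filtration. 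Since Theorem \ref{thmdeform} is stated over a smooth curve, I would first replace $S$ by a smooth curve $C\subseteq S$ through $s_0$, chosen general enough that $C\cap D\neq\emptyset$; as $D$ is dense and open, $C\cap D$ is then open and dense in the irreducible curve $C$. Here $X_{s_0}$ must satisfy the infinitesimal Torelli theorem of degree $m$, a condition that is open on the base by the local freeness in Proposition \ref{prop0} and holds at $X_0$; the argument below therefore yields triviality of the kernel at every fiber in this open locus, which in the intended applications (e.g. complete intersections) is all of $S$.

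With these reductions in place, applying Theorem \ref{thmdeform} to $X_C\to C$ and $g_0$ produces a deformation $g\colon X|_U\to X|_U$ over an open neighborhood $U\subseteq C$ of $s_0$. The key point is then to check that $g$ acts trivially on the entire local system. Indeed, functoriality makes $\Ri^m\pi_*(g)$ an endomorphism of local systems over $U$, that is, a flat (monodromy-invariant) section of $\mathcal{E}nd(\Ri^m\pi_*\Q)$; its value at the stalk over $s_0$ is $\Ho^m(g_0,\Q)=\Id$. A flat section of a local system on the connected base $U$ is determined by its value at a single point, so $\Ri^m\pi_*(g)-\Id$ vanishes identically, giving $\Ho^m(g_s,\Q)=\Id$ for every $s\in U$. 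In other words, $g_s$ lies in the kernel of $\Aut(X_s)\to\Aut(\Ho^m(X_s,\Q))$ for all $s\in U$.

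Finally I would collapse $g$ by rigidity. For $s\in U\cap D$ the kernel is trivial, so $g_s=\Id_{X_s}$, and $U\cap D$ is dense and open in $U$. The locus in the separated (projective) family $X|_U$ where $g$ agrees with the identity is closed and contains every fiber over the dense set $U\cap D$, hence is all of $X|_U$; equivalently, $g$ and the identity are two sections of the separated relative automorphism scheme $\Aut(X|_U/U)\to U$ that agree over a dense open subset, so they coincide. Thus $g=\Id$, and in particular $g_0=g_{s_0}=\Id_{X_{s_0}}$, contradicting the nontriviality of $g_0$. I expect the main obstacle to be the middle step, namely transporting the trivial cohomological action across the family via the flat-section argument, together with the soft but essential observation that infinitesimal Torelli is an open condition so that Theorem \ref{thmdeform} genuinely applies at the fiber in question; the concluding rigidity step is then formal.
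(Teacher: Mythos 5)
Your proposal is correct and takes essentially the same route as the paper's own proof: verify that a kernel element is of $m$-Hodge type by taking $h=\Id_{\Ri^m\pi_*\Q}$, apply Theorem \ref{thmdeform} to deform it over a neighborhood of the given point, note that the deformed automorphisms remain in the kernels of $\Aut(X_s)\rightarrow \Aut(\Ho^m(X_s,\Q))$, and conclude from the density hypothesis that the original automorphism is a specialization of identities, hence trivial. Your additional steps (the reduction to a smooth curve meeting the dense open set, the flat-section argument for why the deformed automorphisms stay in the kernels, and the separatedness/rigidity argument at the end) simply make explicit what the paper's very terse proof leaves implicit.
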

\begin{proof}
Let $f_0$ be an automorphism of $X_0$ in $$\mathrm{Ker_0}:( \Aut(X_0)\rightarrow \Aut(\mathrm{H}^m(X_0,\Q)).$$ Definition \ref{condition1} can be verified by taking $h=\Id_{\Ri^m\pi_*(\mathbb{Q})}$. Therefore, by Theorem \ref{thmdeform}, $f_0$ can be deformed to an automorphism $f_s\in \Ker_s$ of $X_s$ where $s\in S$ is a general point. By the assumption of the corollary, we conclude that $f_0$ is a specialization of identities, therefore, $f_0=\Id_{X_0}$.
\end{proof}

\begin{remark}
One can use Corollary \ref{faithful} to show that the automorphism groups of higher dimensional varieties (e.g. complete intersections) act on their cohomology faithfully as long as they satisfy the infinitesimal Torelli theorem and the general members of these varieties have no non-trivial automorphisms. 

\end{remark}


\section{Crystalline Cohomology and Obstructions} \label{cryobs}

In this section, we use the notations following 
\cite{Ber} and \cite{BO}. We will carry out a proof of Theorem \ref{corolifting} through the rest of the paper. Suppose that $S\rightarrow T$ is a closed immersion of affine schemes with the square zero ideal sheaf $\II$ and the prime $p$ is nilpotent on $T$. The pair $(T,\II)$ has a natural P.D structure such that $\II^{[a]}=0$ if $a\geq 2$. Suppose that $X$ and $Y$ are smooth projective schemes over $T$ with reductions $X_0$ and $Y_0$ over $S$. We have two inclusions
\begin{center}
$i:Y_0\hookrightarrow Y$ and $j:X_0\hookrightarrow X$.
\end{center}Let $f_0:X_0\rightarrow Y_0$ be an isomorphism between $X_0$ and $Y_0$. It gives rise to a map \begin{equation}\label{fcris}
\cris^k(f_0/T):\cris^k(Y_0/T)\rightarrow \cris^k (X_0/T).
\end{equation} 
By the comparison theorem of crystalline cohomology and de Rham cohomology, we can view this map as $\cris^k(f_0):\Ho^{k}_{\DR}(Y/T)\rightarrow \Ho^{k}_{\DR}(X/T)$. 

If we restrict $S$ in Theorem \ref{thmdeform} to a small disk $\Delta$ with center $0$, then the local system $\Ri^m\pi_* \Q|_{\Delta}$ is trivial and the the map $\Ho^m(g_0)$ induces the horizontal map \[h_{\Delta}:\Ri^m\pi_* \Q|_{\Delta}\rightarrow \Ri^m\pi_* \Q|_{\Delta}\] (cf. Definition \ref{condition1}) with the stalk $h_0=\Ho^m(g_0)$. In the case of mixed characteristic, the map $\cris^k(f_0)$ replaces the role of the horizontal morphism $h_{\Delta}$.

\begin{Assumption}\label{assumption}
Throughout the rest of the paper, we fix the notations and the assumptions as follows. 

Let $\T^{\bullet}$ be a complex. We denote by $\T^{\bullet}[m]$ the complex \[(\T^{\bullet}[m])_n=\T_{m+n}\text{~ with differentials~} d_{(\T^{\bullet}[m])_n}=d_{\T^{\bullet}_{m+n}}:(\T^{\bullet}[m])_n\rightarrow (\T^{\bullet}[m])_{n+1}.\] 

Let $T$ be an affine scheme with the square zero ideal sheaf $\II$. Suppose that a prime $p$ is nilpotent on $T$. Let $S$ be the closed subscheme of $T$ defined by $\II$. Let $X$ (resp. $Y$) be a smooth projective scheme over $T$ with reductions $X_0$ (resp. $Y_0$) over $S$. 

Suppose that the Hodge-de Rham spectral sequences of $X/T$ and $Y/T$ degenerate at $E_1$ and the terms are locally free, so that the Hodge and de Rham cohomology sheaves commute with base change.\\
\end{Assumption}
We denote by $\F_X$ (resp. $\F_Y$) the Hodge filtration ($\F_{Hdg}$) for $X/T$ (resp. $Y/T$). Since $\cris^k(f_0)\otimes \Id_S=\Ho^k_{\DR}(f_0)$ preserves the Hodge filtrations, the map $$\F^p_{Y}\Ho^k_{\DR}(Y/T)\rightarrow gr_{\F_{X}}^{p-1}\Ho^k_{\DR}(X/T)\otimes\OO_S$$is zero. The map $\cris^k(f_0)$ induces a map:
\[
\begin{aligned}
\F^p_{Y}\Ho^k_{\DR}(Y/T)\rightarrow gr_{\F_X}^{p-1}\Ho^k_{\DR}(X/T)\otimes \II&=\Ho^{k-p+1}(X,\Omega_{X/T}^{p-1})\otimes \II \\
&=\Ho^{k-p+1}(X_0,\Omega_{X_0/S}^{p-1})\otimes \II
\end{aligned}
\]
which factors through
\begin{equation} \label{defrohf}
\rho(f_0)_p:\F^p_{Y_0}\Ho^k_{\DR}(Y_0/S)\rightarrow \Ho^{k-p+1}(X_0,\Omega_{X_0/S}^{p-1})\otimes \II .
\end{equation}
We denote $\rho(f_0)_p$ by $\rho(f_0)$ for simplicity. On the other hand, the obstruction $ob(f_0)$ of extending $f_0$ to a $T$-morphism $X\rightarrow Y$ is an element of $\Ext^1_{X_0}(f^*_0\Omega^1_{Y_0/S},\OO_{X_0}\otimes \II)$. The following proposition relates $\rho(f_0)$ and $ob(f_0)$.
\begin{prop} \label{generalremark}
With the notations and assumptions in \ref{assumption}, let $f_0:X_0\rightarrow Y_0$ be an isomorphism between $X_0$ and $Y_0$. We have the following commtative diagram
\begin{equation} 
\xymatrix{\F_{Y_0}^p\Ho^k_{\DR}(Y_0/S) \ar[d]^{\proj} \ar[rr]^{\rho(f_0)} &&gr_{\F_{X_0}}^{k-p+1}\Ho^k_{\DR}(X_0/S)\otimes\II \ar@{=}[d]\\
\Ho^{k-p}(Y_0,\Omega^p_{Y_0/S})\ar[rr]^{\pm ob(f_0)\cup} &&\Ho^{k-p+1}(X_0,\Omega^{p-1}_{X_0/S})\otimes \II .}
\end{equation}

\end{prop}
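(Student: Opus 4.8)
The plan is to prove the commutativity by a local-to-global computation that realizes both $\rho(f_0)$ and the obstruction cup product through local lifts of $f_0$ over $T$ and a \v{C}ech--de Rham resolution. The structure of the argument parallels Bloch's identification of the Gauss--Manin connection with the Kodaira--Spencer cup product in Proposition \ref{prop1}: there the infinitesimal variation is governed by the Kodaira--Spencer class, whereas here the ``infinitesimal lift'' of $f_0$ across the square-zero thickening $S\hookrightarrow T$ is governed by the obstruction class $ob(f_0)$.

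First I would use the crystalline--de Rham comparison to view $\cris^k(f_0)$ as an honest map $\Ho^k_{\DR}(Y/T)\to\Ho^k_{\DR}(X/T)$ over $T$. By Assumption \ref{assumption} the Hodge and de Rham sheaves commute with base change, so $\F^p_Y\Ho^k_{\DR}(Y/T)$ surjects onto $\F^p_{Y_0}\Ho^k_{\DR}(Y_0/S)$ and the reduction of $\cris^k(f_0)$ modulo $\II$ is $\Ho^k_{\DR}(f_0)$. Filtration preservation of this reduction means that the image of $\F^p_Y$ deviates from $\F^p_X$ only by a term in $\II\cdot\Ho^k_{\DR}(X/T)$; the resulting $gr^{p-1}_{\F_X}$-component is $\II$-linear and descends to $\F^p_{Y_0}\Ho^k_{\DR}(Y_0/S)$, and this is precisely the well-defined map $\rho(f_0)$ that I must compute.

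Next, since $X/T$ is smooth, I would choose an affine open cover $\{U_\alpha\}$ of $X$ and lift $f_0|_{U_\alpha}$ to $T$-morphisms $f_\alpha\colon U_\alpha\to Y$. On overlaps the two lifts differ by a $T$-derivation $d_{\alpha\beta}\in\mathrm{Hom}_{X_0}(f_0^*\Omega^1_{Y_0/S},\OO_{X_0}\otimes\II)$, and the \v{C}ech $1$-cocycle $\{d_{\alpha\beta}\}$ represents $ob(f_0)\in\Ext^1_{X_0}(f_0^*\Omega^1_{Y_0/S},\OO_{X_0}\otimes\II)$, which is the standard description of the obstruction to globalizing the lift. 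Computing $\Ho^k_{\DR}$ through the associated \v{C}ech--de Rham double complex, the local pullbacks $f_\alpha^*\colon\Omega^\bullet_{Y/T}\to\Omega^\bullet_{X/T}$ assemble, together with the \v{C}ech differential, into a chain-level representative of $\cris^k(f_0)$. The essential computation is then to extract, from a cocycle representing an $\F^p$ class, the part of the image that drops the Hodge level by exactly one: because $\II^2=0$, only the term linear in the $d_{\alpha\beta}$ survives, and it acts on a $p$-form by interior contraction, producing a $(p-1)$-form tensored with $\II$. Organizing this diagram chase with the homological lemmas of Section \ref{homoalg} (Lemma \ref{lemcom1} and Lemma \ref{lemcom2}) and using the identification $df_0\colon f_0^*\Omega^1_{Y_0/S}\xrightarrow{\cong}\Omega^1_{X_0/S}$, this surviving term is exactly the cup product of the projected class in $gr^p_{\F_{Y_0}}=\Ho^{k-p}(Y_0,\Omega^p_{Y_0/S})$ with $ob(f_0)$, landing in $\Ho^{k-p+1}(X_0,\Omega^{p-1}_{X_0/S})\otimes\II$, which is the claimed square.

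I expect the main obstacle to be the chain-level bookkeeping in the last step: one must show that the filtration-preserving crystalline map, which admits no global lift, is nonetheless represented by local lifts plus a \v{C}ech correction, and that upon passing to the single graded piece $gr^{p-1}_{\F_X}$ all higher-order and ``diagonal'' contributions---those arising from the interaction of the de Rham differential with the derivations $d_{\alpha\beta}$---drop out, leaving only the contraction by the cocycle. Tracking the Koszul and \v{C}ech signs through this reduction is what produces the ambiguous sign $\pm$ in the statement, and verifying that the surviving term is genuinely $\II$-linear, so that it descends to the asserted map between $\F^p_{Y_0}\Ho^k_{\DR}(Y_0/S)$ and $\Ho^{k-p+1}(X_0,\Omega^{p-1}_{X_0/S})\otimes\II$, is the most delicate point.
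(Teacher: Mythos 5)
Your route is genuinely different from the paper's: you work with local lifts $f_\alpha$ of $f_0$ over an affine cover of $X$ and assemble them with \v{C}ech corrections, whereas the paper globalizes once and for all via the divided-power envelope $D$ of the graph embedding $X_0\hookrightarrow X\times_T Y$, computes $\cris^k(f_0/T)$ as $\pi_Y^*$ into $\Omega^{\bullet}_{D/T}$ (using \cite[Remark 7.5]{BO} and \cite[Chapter V]{Ber}), and then compares with $ob(f_0)\cup$ by writing down explicit chain maps $F$ and $H$ between graded pieces of the filtered complexes (Lemmas \ref{lem1}, \ref{lem2}, \ref{lem3}). Your terminal computation is sound in outline: two local lifts differ by a derivation $d_{\alpha\beta}\in\mathrm{Hom}_{X_0}(f_0^*\Omega^1_{Y_0/S},\OO_{X_0}\otimes\II)$, the cocycle $\{d_{\alpha\beta}\}$ represents $ob(f_0)$, and since $\II^2=0$ the Cartan-type identity $f_\alpha^*-f_\beta^*=d\circ\iota_{d_{\alpha\beta}}+\iota_{d_{\alpha\beta}}\circ d$ makes contraction a chain homotopy, so the \v{C}ech-degree-one component of the assembled map is exactly cup product with $\{d_{\alpha\beta}\}$, which is what the projection to $gr^{p-1}_{\F_{X}}\otimes\II$ sees.

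The genuine gap is the step you flag as ``the main obstacle'' and then never close: you must prove that the \v{C}ech--de Rham chain map $(f_\alpha^*,\iota_{d_{\alpha\beta}})$ represents the crystalline map $\cris^k(f_0)$, and not merely \emph{some} map $\Ho^k_{\DR}(Y/T)\to\Ho^k_{\DR}(X/T)$ lifting $\Ho^k_{\DR}(f_0)$. This cannot be finessed: two such lifts differ by an arbitrary map into $\II\cdot\Ho^k_{\DR}(X/T)$, and $\rho(f_0)$ is precisely the datum distinguishing them, so if your representative were the wrong lift the proposition would compute the wrong class. Identifying the \v{C}ech representative with the crystalline functoriality map requires the crystal structure (HPD stratification / truncated Taylor expansion) of $\OO_{X/T}$ --- that is, exactly the PD-envelope machinery the paper deploys, with $D$ serving as the canonical object gluing your local lifts. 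As written, your proposal therefore reduces Proposition \ref{generalremark} to an unproved claim of comparable depth, which is where the real content lies. A minor further point: Lemmas \ref{lemcom1} and \ref{lemcom2} belong to the characteristic-zero deformation argument of Section \ref{deforms}; they play no role in this crystalline computation and do not supply the missing identification.
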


For $p=1$, Proposition \ref{generalremark} is \cite[Proposition 3.20]{derham}. The idea of the proof of Proposition \ref{generalremark} is similar to the proof of \cite[Proposition 3.20]{derham}, but the arugment for the case of higher degrees $p\geq 2$ is much more complicated. The proof of Proposition \ref{generalremark} depends on the concrete descriptions of the maps of $\rho({f_0})$ and $ob(f_0)\cup$. We give these concrete descriptions in the rest of this section and present a proof of Proposition \ref{generalremark} in Section \ref{bloch}. 

Let us cite some results from \cite{BO}. Recall that there are short exact sequences as follows (\cite[Chapter 5, 5.2 (3)]{BO}):
\[0\rightarrow \jj_{X_0/T} \rightarrow \OO_{X_0/T} \rightarrow i_{X_0/T*}(\OO_{X_0})\rightarrow 0 \]and
\[0\rightarrow \jj_{X/T} \rightarrow \OO_{X/T} \rightarrow i_{X/T*}(\OO_{X})\rightarrow 0. \]
There is a natural morphism of topoi:
\[u_{X/T}:(X/T)_{cris}\rightarrow X_{zar},\]
given by \[\left(u_{X/T*}(\mathcal{F}))(U)=\Gamma((U/T)_{cris},j^*_{cris} \mathcal{F}\right)\]where $\mathcal{F}\in (X/T)_{cris}$ and $j:U\rightarrow X$ is an open immersion, cf. \cite[Proposition 5.18]{BO}.

\begin{lemm}\label{quo}\label{lemm41}
Let $\jj$ be the quotient of $j_{cris*}(\jtp_{X_0/T}) $ by $ \jtp_{X/T}$. We have a short exact sequence \[0\rightarrow \jtp_{X/T}\rightarrow j_{cris*}(\jtp_{X_0/T})\xrightarrow{Q} \jj \rightarrow 0.\]Then (in the derived category)
\[R\ux(\jj)= \II \otimes \Ox^{p-1}[-p+1]. \]
\end{lemm}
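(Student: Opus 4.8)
The plan is to apply the derived functor $R\ux$ to the given short exact sequence and then to identify the resulting distinguished triangle with one produced by two filtered de Rham complexes of the smooth lift $X/T$.

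First, applying $R\ux$ to $0\to\jtp_{X/T}\to j_{cris*}(\jtp_{X_0/T})\xrightarrow{Q}\jj\to 0$ yields a distinguished triangle
\[R\ux(\jtp_{X/T})\longrightarrow R\ux\big(j_{cris*}\jtp_{X_0/T}\big)\longrightarrow R\ux(\jj)\xrightarrow{+1}.\]
I would rewrite the middle term using the commutative square of morphisms of topoi $u_{X/T}\circ j_{cris}=\bar\jmath\circ u_{X_0/T}$, where $\bar\jmath\colon X_0\hookrightarrow X$ is the underlying closed immersion. Because $X_0\hookrightarrow X$ is a nilpotent thickening, $\bar\jmath$ is a homeomorphism, so $\bar\jmath_*$ is exact and transparent; after checking that $R^{>0}j_{cris*}$ vanishes on $\jtp_{X_0/T}$ (which holds since $j_{cris}$ is essentially an equivalence for this P.D. thickening), one obtains $R\ux(j_{cris*}\jtp_{X_0/T})\cong R\uxz(\jtp_{X_0/T})$ on the common underlying space $|X|=|X_0|$.

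Second, I would compute the two outer terms by the filtered crystalline Poincar\'e lemma of \cite{BO}, using $X$ as the common smooth lift so that both crystalline cohomologies are computed by $\Omega^\bullet_{X/T}$. There the divided-power filtration translates into a filtered de Rham complex whose $i$-th term is the $(p-i)$-th divided power of the conormal ideal tensored with $\Ox^i$. For $X/T$ this conormal ideal is zero, so $R\ux(\jtp_{X/T})\simeq\Omega^{\bullet\geq p}_{X/T}$, the stupid (Hodge) filtration. For $X_0/T$ the conormal ideal is $\II\OO_X$, and since the prescribed P.D. structure satisfies $\II^{[a]}=0$ for $a\ge 2$, every divided-power factor of degree $\ge 2$ vanishes, leaving precisely one extra term $\II\otimes\Ox^{p-1}$ in cohomological degree $p-1$; thus
\[R\uxz(\jtp_{X_0/T})\simeq\big[\,\II\otimes\Ox^{p-1}\to\Ox^p\to\Ox^{p+1}\to\cdots\,\big]\]
with $\II\otimes\Ox^{p-1}$ placed in degree $p-1$.

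Finally, under these identifications the map in the triangle becomes the inclusion of $\Omega^{\bullet\geq p}_{X/T}$ into the latter complex as the subcomplex supported in degrees $\ge p$; this is a genuine chain map whose cone is the single sheaf $\II\otimes\Ox^{p-1}$ in degree $p-1$, giving $R\ux(\jj)\simeq\II\otimes\Ox^{p-1}[-p+1]$, as asserted. I expect the principal obstacle to be the second step: establishing the exact shape of the filtered de Rham complex computing $R\uxz(\jtp_{X_0/T})$, namely that the divided-power filtration contributes the conormal term $\II\otimes\Ox^{p-1}$ and nothing in lower degrees. This rests on a careful application of the filtered Poincar\'e lemma together with the hypothesis $\II^{[a]}=0$ for $a\ge 2$, and the bookkeeping is substantially more delicate than in the case $p=1$ recorded in \cite[Proposition 3.20]{derham}.
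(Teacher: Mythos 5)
Your proposal is correct and follows essentially the same route as the paper: both identify $R\ux(\jtp_{X/T})$ with $\Omega^{\bullet\geq p}_{X/T}$ and $R\ux\bigl(j_{cris*}\jtp_{X_0/T}\bigr)$ with the complex $\bigl(\II\Omega^{p-1}_{X/T}\rightarrow \Omega^{p}_{X/T}\rightarrow\cdots\bigr)$ via the filtered Poincar\'e lemma (using $\II^{[a]}=0$ for $a\geq 2$ and $(X_0)_{zar}=X_{zar}$), and then read off $R\ux(\jj)$ as the cone of the inclusion of the first complex into the second. The paper phrases the last step as the triangulated-category axiom producing the isomorphism $h$ on third terms, which is exactly your cone identification.
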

\begin{proof}
By \cite[Theorem 2.1]{Trans}, \cite[Proposition 3.4.1]{Ber}, $(X_0)_{zar}=X_{zar}$ and $\II^2=0$, we have  that (in the derived category)
\begin{equation} \label{lemma41com}
 \xymatrix{ & Ru_{X/T*}(\jtp_{X/T})\ar[r] \ar[d]_{\cong} &Ru_{X/T*}\left(j_{cris*}(\jtp_{X_0/T})\right)\ar[r]\ar[d]_{\cong} & Ru_{X/T*}\jj \ar[r]^>>>{+1} \ar@{-->}[d]_{h}& \\
0 \ar[r]& \F^p_{X}\Omega_{X/T}^{\bullet} \ar[r] & \F^p_{X_0}\Omega_{X/T}^{\bullet}\ar[r]& \II\otimes \Omega^{p-1}_{X/T}[-p+1]\ar[r]&0}
\end{equation}
where \[\F^p_X\Omega _{X/T}^{\bullet}=\left(\ldots\rightarrow 0\rightarrow \Omega^p_{X/T}\rightarrow \Omega ^{p+1}_{X/T}\rightarrow \ldots \right)\]
and
\begin{equation} \label{filtration}
\F^p_{X_0}\Omega _{X/T}^{\bullet}= \sum \limits_{a+b=p} \II^{[a]}\F_X^b\Omega _{X/T}^{\bullet}=\left(\ldots\rightarrow 0\rightarrow \II\Omega_{X/T}^{p-1}\rightarrow \Omega^p_{X/T}\rightarrow \Omega_{X/T}^{p+1}\rightarrow \ldots\right), 
\end{equation}
see \cite[Theorem 2.1]{Trans} and \cite[Theorem 7.2]{BO}. Since a derived category is a triangulated category, the induced isomorphism $h$ between the distinguished triangles is an isomorphism. We have proved the lemma.

\end{proof}

\begin{lemm}\label{lemm42}With the notations as in Lemma \ref{lemm41}, we have
\[\cris^k(X,\jtp_{X/T}) \cong \Ho^k(X,\Ox^{{\bullet}\geq p}) \cong \F_X^p\Ho^k_{\DR}(X/T)\]
\end{lemm}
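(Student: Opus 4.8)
The plan is to compute the crystalline cohomology $\cris^k(X,\jtp_{X/T})$ by pushing it down to the Zariski site and invoking the identification already established in Lemma \ref{lemm41}. Recall that crystalline cohomology is the hypercohomology of the crystalline topos, and that it is computed on $X_{zar}$ through the morphism of topoi $u_{X/T}\colon (X/T)_{cris}\rightarrow X_{zar}$, so that
\[\cris^k(X,\jtp_{X/T})=\Ho^k\left((X/T)_{cris},\jtp_{X/T}\right)\cong \Ho^k\left(X_{zar},Ru_{X/T*}(\jtp_{X/T})\right).\]
The essential input is the computation of $Ru_{X/T*}(\jtp_{X/T})$ that underlies Lemma \ref{lemm41}: by \cite[Theorem 7.2]{BO} together with \cite[Theorem 2.1]{Trans}, one has
\[Ru_{X/T*}(\jtp_{X/T})\cong \F^p_X\Omega^{\bullet}_{X/T}=\Ox^{\bullet\geq p}\]
in the derived category, where $\F^p_X\Omega^{\bullet}_{X/T}$ is precisely the truncated de Rham complex with $\Omega^q_{X/T}$ in degrees $q\geq p$ and $0$ elsewhere. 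Taking hypercohomology of this isomorphism yields the first isomorphism $\cris^k(X,\jtp_{X/T})\cong \Ho^k(X,\Ox^{\bullet\geq p})$.

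For the second isomorphism I would appeal to the definition of the Hodge filtration, namely
\[\F^p_X\Ho^k_{\DR}(X/T)=\mathrm{Im}\left(\Ho^k(X,\Ox^{\bullet\geq p})\rightarrow \Ho^k(X,\Ox^{\bullet})\right),\]
combined with the hypothesis in \ref{assumption} that the Hodge--de Rham spectral sequence of $X/T$ degenerates at $E_1$. The complex $\Ox^{\bullet\geq p}$ is the $p$-th step of the stupid filtration $\sigma^{\geq p}\Ox^{\bullet}$, whose associated spectral sequence is exactly the Hodge--de Rham spectral sequence $E_1^{p,q}=\Ho^q(X,\Omega^p_{X/T})\Rightarrow \Ho^{p+q}_{\DR}(X/T)$. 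Degeneration at $E_1$ is equivalent to the injectivity of the natural maps $\Ho^k(X,\Ox^{\bullet\geq p})\rightarrow \Ho^k_{\DR}(X/T)$ for all $p$ and $k$; hence each such map identifies its source with its image, which is by definition $\F^p_X\Ho^k_{\DR}(X/T)$. This gives the second isomorphism.

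The genuinely hard part, the identification of $Ru_{X/T*}(\jtp_{X/T})$ with the truncated de Rham complex, is not repeated here since it is exactly the content of Lemma \ref{lemm41} and the cited results of Berthelot--Ogus. Consequently the remaining work is bookkeeping: checking that the two filtrations that both go under the name $\F^p$ (the truncation filtration on $\Ox^{\bullet}$ and the Hodge filtration on $\Ho^k_{\DR}$) are matched by these isomorphisms, and that the $E_1$-degeneration assumption supplies the injectivity needed for the second step. The only point requiring mild care is that the degeneration hypothesis must be used in its equivalent form as injectivity of the filtration-inclusion maps on hypercohomology, a standard reformulation for the spectral sequence of a filtered complex.
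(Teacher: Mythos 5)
Your proof is correct and takes essentially the same route as the paper, whose one-line proof simply cites \cite[7.2.1]{BO} together with Assumption \ref{assumption}: that citation amounts exactly to your two steps, namely the filtered Poincar\'e lemma identification $Ru_{X/T*}(\jtp_{X/T})\cong \Ox^{\bullet\geq p}$ underlying Lemma \ref{lemm41} for the first isomorphism, and the $E_1$-degeneration hypothesis (used in its standard reformulation as injectivity of $\Ho^k(X,\Ox^{\bullet\geq p})\rightarrow \Ho^k_{\DR}(X/T)$) for the second. Nothing further is needed.
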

\begin{proof}
The lemma follows from \cite[7.2.1]{BO} and our assumption \ref{assumption}.
\end{proof}
\subsection*{Description of $\rho(f_0)$}

In this subsection, we define a map \[\theta:Ru_{Y/T*}\jtp_{Y/T} \rightarrow  Rf_{0*}(\II\otimes \Omega^{p-1}_{X/T}[-p+1])\]which induces the map $\rho(f_0)$. 

First of all, we make a map $\rho$ as follows. Recall that we have a map $f_{0cris}^{-1} \jtp_{Y_0/T}\rightarrow \jtp_{X_0/T}$ (see \cite[Chapter III]{Ber}). The map $\rho$ is given as follows:
\[\xymatrix{R\uy\jtp_{Y/T}\ar[r] \ar[d]^{\rho}& R\uy i_{cris*}\jtp_{Y_0/T} \ar@{=}[d] \\
i_{zar*}Rf_{0*}Ru_{X_0/T*}\jtp_{X_0/T} & i_{zar*}Ru_{Y_0/T}^*\jtp_{Y_0/T}\ar[l]^{\psi}}\]
where the map $\psi$ is induced by
\[Ru_{Y_0/T*}\jtp_{Y_0/T}\rightarrow Ru_{Y_0/T*}Rf_{0cris*}(\jtp_{X_0/T})\cong Rf_{0zar*}Ru_{X_0/T*}(\jtp_{X_0/T}).\]
The quotient $Q$ in Lemma \ref{lemm41} gives rise to a map 
\begin{equation}\label{hatrho}
\widehat{\rho}=i_*Rf_{0*}\left(Ru_{X/T*}(Q)\right): i_*Rf_{0*}[Ru_{X/T*}(j_*\jtp_{X_0/T})] \rightarrow i_*Rf_{0*}(Ru_{X/T*}(\jj)).
\end{equation}

We define $\theta:=\hat{\rho}\circ \rho$. In other words, the map\[\theta : Ru_{Y/T}\jtp_{Y/T} \rightarrow Rf_{0*}(\II \otimes \Omega^{p-1}_{X/T}[-p+1]) \]is defined as
\[ \xymatrix{ Ru_{Y/T*}\jtp_{Y/T} \ar[r]^<<<<{\rho} & i_*Rf_{0*}Ru_{X_0/T*}(\jtp_{X_0/T}) \ar@{=}[r] & i_*Rf_{0*}j_*Ru_{X_0/T*} (\jtp_{X_0/T})\ar@{=}[dll]  \\
i_*Rf_{0*}[Ru_{X/T*}(j_*\jtp_{X_0/T})] \ar[r]_{\widehat{\rho}}& i_*Rf_{0*}(Ru_{X/T*}(\jj)) \ar@{=}[r] &Rf_{0*}(\II \otimes \Omega^{p-1}_{X/T}[-p+1]) } \]
where the identity in the first row follows from $(X_0)_{zar}=X_{zar}$ and the last identity in the second row follows from Lemma \ref{quo} and $(Y_0)_{zar}=Y_{zar}.$ Furthermore, the following diagram is commutative, cf. \cite[the proof of Proposition 3.20, page 184] {derham}:
\begin{equation}\label{thetadiag}
\xymatrix{\Ho^k(Y/T,\jtp_{Y/T}) \ar[d] \ar@{=}[r] &\F_{Y}^p \Ho^{k}_{\DR}(Y/T) \ar[rr]^>>>>>>>>>{\rho(f_0)} & & gr_{\F_{X_0}}^{k-p+1}\Ho^k_{\DR}(X_0/S)\otimes\II \ar@{=}[d] \\
\Ho^k(Y/T,i_*\jtp_{Y_0/T}) \ar[d] \ar[r] & \Ho^k (X/T,j_*\jtp_{X_0/T}) \ar[rr]^{G_0} \ar[d]_{G_1} & & \Ho^{k-p+1}(X,\Omega_{X/T}^{p-1}\otimes \II )\ar[d]\\
\Ho^k(Y/T,\OO_{Y/T})\ar[r]^{\Ho^k_{cris}(f_0)} & \Ho^k(X/T,\OO_{X/T})\ar[rr] & &\Ho^{k-p+1}(X,\Omega_{X/T}^{p-1}) }
\end{equation}
where the identity in the first row is due to Lemma \ref{lemm42}, the map $G_0$ is induced by the map $Q$ in Lemma ~\ref{lemm41} and the vertical map $G_1$ is induced by \[j_{cris*}(\jtp_{X_0/T}) \hookrightarrow j_{cris*}\OO_{X_0/T}\cong \OO_{X/T} .\]
The diagram (\ref{thetadiag}) makes it clear that $\rho(f_0)$ is induced by the local map $\theta$.

On the other hand, we have the following diagram:

\begin{equation}\label{dbstar}
\xymatrix{Ru_{Y/T*}\jtp_{Y/T} \ar@/^2pc/[rr]^{\theta}\ar[r]^<<<<<<{\rho} \ar[d]^{\proj} & Rf_{0*} Ru_{X_0/T*}(\jtp_{X_0/T}) \ar[r]^>>>>{\widehat{\rho}} \ar[d] & Rf_{0*}(\II\otimes \Omega^{p-1}_{X/T}[-p+1]) \ar[d]\\
Ru_{Y/T*}\left(\jtp_{Y/T}/\jthp_{Y/T}\right)  \ar[r]^<<<{\Psi} & Rf_{0*}Ru_{X_0/T*}\left(\jtp_{X_0/T}/\jthp_{X_0/T}\right) \ar[r] & Rf_{0*}(\II/\II^{[2]}\otimes \Ox^{p-1}[-p+1] ).}
\end{equation}
The second row of the digram gives rise to a map
\begin{equation}\label{star}
Ru_{Y/T*}\left(\jtp_{Y/T}/\jthp_{Y/T}\right) \rightarrow Rf_{0*}(\II/\II^{[2]}\otimes \Ox^{p-1}[-p+1] ).
\end{equation}

Let us describe the map $\Psi$ more precisely. The map $\Psi$ is given by
\[\xymatrix{ Ru_{Y/T*}\jtp_{Y/T}/\jthp_{Y/T} \ar[dr]^{\Psi} \ar[r] & Ru_{Y/T*}i_{cris*}(\jtp_{Y_0/T}/\jthp_{Y_0/T})\ar@{=}[r] &i_{zar*}Ru_{Y_0/T*}(\jtp_{Y_0/T}/\jthp_{Y_0/T})\ar[d]^{\phi}\\
&Rf_{0*}Ru_{X_0/T}(\jtp_{X_0/T}/\jthp_{X_0/T}) \ar@{=}[r] & i_{*}Ru_{Y_0/T*}Rf_{0*}(\jtp_{X_0/T}/\jthp_{X_0/T}) }\]where the existence of the first arrow in the first row follows from the fact that $i_{cris*}$ is exact (\cite[Proposition 6.2]{BO}), the identities in the first and the second rows follow from \cite[Proposition 3.4.1]{Ber}, the arrow $\phi$ is induced by the morphism
\[\jj^{[k]}_{Y_0/T}\rightarrow f_{0cris *}\jj^{[k]}_{X_0/T}.\]

\begin{lemm}\label{dif}\label{lemm43}With the notations as above, we have
\[Ru_{Y/T*}\left(\jtp_{Y/T}/\jthp_{Y/T}\right)\cong \Oy^p[-p].\]
\end{lemm}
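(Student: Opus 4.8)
The plan is to deduce the identification from the Berthelot--Ogus description of $Ru_{Y/T*}$ on divided power ideals, applied once with exponent $p$ and once with exponent $p+1$, in exactly the manner of the proof of Lemma~\ref{lemm41}. That description, resting on \cite[Theorem 2.1]{Trans} and \cite[Theorem 7.2]{BO} together with the degeneration hypothesis in Assumption~\ref{assumption}, yields quasi-isomorphisms
\[ Ru_{Y/T*}(\jtp_{Y/T})\cong \F^p_Y\Omega_{Y/T}^{\bullet}, \qquad Ru_{Y/T*}(\jthp_{Y/T})\cong \F^{p+1}_Y\Omega_{Y/T}^{\bullet}, \]
where $\F^p_Y\Omega_{Y/T}^{\bullet}=(\cdots\to 0\to \Omega^p_{Y/T}\to\Omega^{p+1}_{Y/T}\to\cdots)$ is the truncated de Rham complex. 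So the whole problem reduces to understanding the cofiber of the inclusion of these truncated complexes.

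Next I would exploit the short exact sequence of crystalline sheaves
\[ 0\to \jthp_{Y/T}\to \jtp_{Y/T}\to \jtp_{Y/T}/\jthp_{Y/T}\to 0, \]
which, after the derived (hence triangle-preserving) functor $Ru_{Y/T*}$, produces a distinguished triangle whose first two vertices are the two objects displayed above. On the de Rham side the natural inclusion $\F^{p+1}_Y\Omega_{Y/T}^{\bullet}\hookrightarrow \F^p_Y\Omega_{Y/T}^{\bullet}$ is a termwise monomorphism of complexes, and the quotient short exact sequence is again a distinguished triangle whose third vertex is the quotient complex
\[ \F^p_Y\Omega_{Y/T}^{\bullet}/\F^{p+1}_Y\Omega_{Y/T}^{\bullet}\cong \Omega^p_{Y/T}[-p], \]
since the only term that is not cancelled is $\Omega^p_{Y/T}$ sitting in degree $p$. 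Placing these two triangles side by side and taking the Berthelot--Ogus quasi-isomorphisms as the two left vertical maps, the axioms of a triangulated category force the induced morphism on the third vertices to be an isomorphism, exactly as in the closing step of Lemma~\ref{lemm41}. This gives $Ru_{Y/T*}(\jtp_{Y/T}/\jthp_{Y/T})\cong \Omega^p_{Y/T}[-p]$.

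The only genuine point requiring care, and hence the main (though essentially formal) obstacle, is the commutativity of the left-hand square: one must check that the Berthelot--Ogus identifications of $Ru_{Y/T*}(\jthp_{Y/T})$ and $Ru_{Y/T*}(\jtp_{Y/T})$ intertwine the crystalline inclusion $\jthp_{Y/T}\hookrightarrow \jtp_{Y/T}$ with the inclusion of truncated complexes $\F^{p+1}_Y\Omega_{Y/T}^{\bullet}\hookrightarrow \F^p_Y\Omega_{Y/T}^{\bullet}$. I expect this to follow directly from the functoriality of the comparison isomorphism of \cite[Theorem 7.2]{BO} with respect to the divided power filtration, so that no new computation is needed once naturality is recorded; the remainder of the argument is the purely combinatorial evaluation of the quotient complex.
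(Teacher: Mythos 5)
Your proposal is correct and follows essentially the same route as the paper: the paper's proof likewise applies the filtered Poincar\'e lemma (citing \cite[6.13 and 7.2]{BO}) to identify $Ru_{Y/T*}\jtp_{Y/T}$ and $Ru_{Y/T*}\jthp_{Y/T}$ with $\F_Y^p\Omega^{\bullet}_{Y/T}$ and $\F_Y^{p+1}\Omega^{\bullet}_{Y/T}$, maps the distinguished triangle coming from $0\to\jthp_{Y/T}\to\jtp_{Y/T}\to\jtp_{Y/T}/\jthp_{Y/T}\to 0$ to the triangle of truncated complexes with quotient $\Omega^p_{Y/T}[-p]$, and invokes the triangulated-category axioms to conclude the induced map on the third vertices is an isomorphism. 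Your added remark on naturality of the comparison isomorphism is exactly the compatibility the paper's commutative diagram takes for granted.
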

\begin{proof}
By the proof of the filtered Poincar\'e lemma \cite[6.13 and 7.2]{BO}, we have ( in the derived category)
\[\xymatrix{&Ru_{Y/T*}\jthp_{Y/T}\ar[r] \ar[d]_{\cong} &Ru_{Y/T*}\jtp_{Y/T} \ar[r]\ar[d]_{\cong} &Ru_{Y/T*}(\jtp_{Y/T}/\jthp_{Y/T})\ar[r]^>>>>{+1} \ar@{-->}[d]_{h}& \\
0\ar[r]& \F_Y^{p+1}\Omega_{Y/T}^{\bullet}\ar[r] &\F_Y^p\Omega^{\bullet}_{Y/T}\ar[r] &\Omega^p_{Y/T}[-p]\ar[r]&0}\]
where the map $h$ is an isomorphism. We have proved the lemma.
\end{proof}



By Lemma \ref{lemm43}, the bottom arrow (\ref{star}) of the diagram (\ref{dbstar}) induces a morphism
\begin{equation} \label{map1} 
\widetilde{\rho(f_0)}: \Omega^p_{Y/T}[-p] \cong Ru_{Y/T*}\left(\jj^{[p]}_{Y/T}/\jj^{[p+1]}_{Y/T}\right) \rightarrow Rf_{0*}(\II/\II^{[2]} \otimes \Omega^{p-1}_{X/T}[-p+1]).
\end{equation}

The morphism (\ref{map1}) gives rise to an "adjoint" morphism (in the derived category) and we denote it by $\rho(f_0)$ as well:
\begin{equation} \label{rhof0}
\rho(f_0):f_0^* \Omega_{Y_0/S}^p[-p]\rightarrow \Omega^{p-1}_{X_0/S}[-p+1]\otimes \II/\II^{[2]}=\Omega^p_{X_0/S}[-p]\otimes \II.
\end{equation}
This "is" an element of
\[\Ho^0\mathbb{R}Hom(f_0^* \Omega_{Y_0/S}^p[-p], \Omega^{p-1}_{X_0/S}[-p+1]\otimes \II/\II^{[2]})=\mathrm{Ext}^1_{\OO_{X_0}}\left( f_0^* \Omega_{Y_0/S}^p, \Omega^{p-1}_{X_0/S}\otimes \II/\II^{[2]}\right).\]

\subsection{de Rham description of $\rho(f_0)$}
In this subsection, we describe the map $\rho(f_0)$ in (\ref{rhof0}) in terms of de Rham theory. Using the graph of $f_0$, we have an immersion \[X_0\xrightarrow{(j, i\circ f_0)} X\times_T Y .\] Let $D$ be the PD envelope of $X_0$ in $X\times_T Y$ and denote by $\pi_D: D \rightarrow X\times_T Y$.  
Let $\ojj$ be the ideal of $X_0$ in $D$. It follows from \cite[Theorem 7.2]{BO} that 
\[
 Ru_{X_0/T*}\jj^{[k]}_{X_0/T}\cong \F_{X_0}^k\Omega^{\bullet}_{D/T} \cong \left(\ojj^{[k]}\rightarrow \ojj^{[k-1]}\Omega^1_{D/T}\rightarrow\ojj^{[k-2]}\Omega^2_{D/T}\rightarrow \ldots \right)
\]
where $\Omega_{D/T}^{\bullet}$ is the de Rham complex of $D/T$ induced by the natural connection 
\begin{equation}\label{na}
\nabla : \OO_D\rightarrow \OO_D\otimes \Omega_{X\times_TY}^1=\Omega_{D/T}^1
\end{equation} of $\OO_D$, cf. \cite[ Exercise 6.4 and Theorem 7.1]{BO}. The $s$-th term of $\Omega_{D/T}^{\bullet}$ is given by $\OO_D\otimes \Omega_{X\times_T Y}^s $.

\begin{lemma} \label{lemmacom}With the same notations as above, we have that
 \[\xymatrix{
 gr_{\F_{X_0}}^p(\Omega ^{\bullet}_{D/T})= ( \ojtp / \ojthp  \ar[r]^>>>d & \ojj^{[p-1]} / \ojtp \otimes \Omega^1_{X\times Y/T}   \ar[r]^<<<d&\ldots \OO_{X_0}\otimes \Omega _{X\times Y/T}^p )}\]
 \[\xymatrix{
 gr_{\F_{X_0}}^p(\Omega ^{\bullet}_{X/T}) = ( \II^{[p]} / \II^{[p+1]} \ar[r]^>>>d & \II^{[p-1]} / \II^{[p]} \otimes \Omega^1_{X_0/S}   \ar[r]^>>>d & \cdots\ar[r]^<<<d & \OO_{X_0}\otimes \Omega _{X_0/S}^p)
  } \]
\[\xymatrix{ & =(0\ar[r] &0 \ar[r] &\ldots \ar[r] &0 \ar[r]& \II/\II^{[2]}\otimes \Omega^{p-1}_{X_0/S}\ar[r] & \Omega _{X_0/S}^p )}\]
   where the first terms of both complexes are of degree zero and the differentials follow the rule \cite[Page 238 (1.3.6)]{Ber} $($\cite[ Exercise 6.4]{BO}$)$.
\end{lemma}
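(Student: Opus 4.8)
The plan is to deduce both identities by forming the associated graded of the filtered de~Rham complexes furnished by the filtered Poincar\'e lemma \cite[6.13, 7.2]{BO}, recalled just above in the shape $(\F^k_{X_0}\Omega^{\bullet}_{D/T})^s=\ojj^{[k-s]}\Omega^s_{D/T}$, with the convention $\ojj^{[j]}=\OO_D$ for $j\le 0$. The first thing to verify is that the de~Rham differential respects this filtration. Since the natural connection (\ref{na}) obeys the divided-power rule $d(\ojj^{[m]})\subseteq\ojj^{[m-1]}\Omega^1_{D/T}$ of \cite[Page 238 (1.3.6)]{Ber}, the Leibniz rule gives $d(\ojj^{[k-s]}\Omega^s_{D/T})\subseteq\ojj^{[k-s-1]}\Omega^{s+1}_{D/T}$, so $d$ preserves $\F^k_{X_0}\Omega^{\bullet}_{D/T}$ and descends to each graded quotient.

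Granting this, the first complex is simply the termwise quotient $\F^p_{X_0}/\F^{p+1}_{X_0}$. In degree $s$ it is $(\ojj^{[p-s]}/\ojj^{[p+1-s]})\otimes_{\OO_D}\Omega^s_{X\times_T Y}$, using $\Omega^s_{D/T}=\OO_D\otimes\Omega^s_{X\times_T Y}$. For $s>p$ the two divided powers coincide, so the term vanishes and the complex is concentrated in degrees $0,\dots,p$; at $s=0$ one reads off $\ojtp/\ojthp$ and at $s=p$ one reads off $(\ojj^{[0]}/\ojj^{[1]})\otimes\Omega^p_{X\times_T Y}=\OO_{X_0}\otimes\Omega^p_{X\times_T Y}$, which is exactly the displayed complex, with differential induced by (1.3.6).

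For the $X$-side I would run the same machine on the immersion $j:X_0\hookrightarrow X$. Because $(T,\II)$ carries its canonical divided-power structure with $\II^{[a]}=0$ for $a\ge 2$ and $\II$ is square-zero, the ideal $\II\OO_X$ already carries a compatible PD structure, so the PD envelope of $X_0$ in $X$ is $X$ itself and $\OO_D=\OO_X$. The filtered Poincar\'e lemma then reads $(\F^k_{X_0}\Omega^{\bullet}_{X/T})^s=\II^{[k-s]}\Omega^s_{X/T}$, in agreement with (\ref{filtration}). Passing to the associated graded yields in degree $s$ the term $(\II^{[p-s]}/\II^{[p+1-s]})\otimes_{\OO_X}\Omega^s_{X/T}$. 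Since each graded piece $\II^{[p-s]}/\II^{[p+1-s]}$ is annihilated by $\II$, it is an $\OO_{X_0}$-module, and combined with the base-change identity $\Omega^s_{X/T}\otimes_{\OO_X}\OO_{X_0}=\Omega^s_{X_0/S}$ for K\"ahler differentials along $X_0=X\times_T S$, these terms rewrite as $(\II^{[p-s]}/\II^{[p+1-s]})\otimes\Omega^s_{X_0/S}$, giving the second displayed complex.

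Finally, the last line is pure bookkeeping of divided powers: as $\II^{[a]}=0$ for $a\ge 2$, the coefficient $\II^{[p-s]}/\II^{[p+1-s]}$ is nonzero only for $p-s\in\{0,1\}$, that is in degrees $s=p-1$ and $s=p$, where it equals $\II/\IIt$ and $\OO_{X_0}$ respectively; every other term dies, collapsing the complex to $\II/\IIt\otimes\Omega^{p-1}_{X_0/S}\to\Omega^p_{X_0/S}$ placed in degrees $p-1$ and $p$. I expect the only genuine subtlety to be the identification of the induced differentials with the rule \cite[Page 238 (1.3.6)]{Ber}: on the graded level the exterior derivative acts through the divided-power coefficient ($[\gamma\,\eta]\mapsto[d\gamma\wedge\eta]$), not through the form $\eta$, so one must track carefully which contribution survives modulo $\F^{p+1}$. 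Everything else is formal manipulation of divided powers together with the base-change identity for differentials.
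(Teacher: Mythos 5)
Your proposal is correct and follows exactly the route the paper takes: the paper's proof is a one-line citation of the filtered Poincar\'e lemma \cite[(6.13), (7.2)]{BO} together with the filtration formula (\ref{filtration}), and your argument simply unwinds that citation (termwise associated graded of $\F^p_{X_0}$, the PD differentiation rule, and the divided-power bookkeeping $\II^{[a]}=0$ for $a\geq 2$). The details you supply, including the observation that the induced graded differential acts through the divided-power coefficient, are accurate and consistent with how the paper later uses the lemma.
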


\begin{proof}
It follows from the proof of the filtered Poincar\'e Lemma \cite[(6.13) and (7.2)]{BO} or \cite[Chapter V, 2]{Ber}. See (\ref{filtration}).
\end{proof}
\begin{remark}\label{reduced}
Let $J$ be the ideal sheaf of $X_0$ in $X\times_T Y$.It follows from \cite[Remark 3.20 3)]{BO} that $J$ is maped into $\ojj$. Therefore, we have $J \ojj^{s}\subseteq \ojj^{s+1}$. It follows that all the terms of $gr_{\F_{X_0}}^p(\Omega ^{\bullet}_{D/T})$ in Lemma \ref{lemmacom} are $\OO_{X_0}$-module in a natural way, i.e., the $s$-th component is 
$$\left(gr_{\F_{X_0}}^p(\Omega ^{\bullet}_{D/T})\right)_s=\ojj^{[p-s]} / \ojj^{[p-s+1]}  \Omega^s_{X\times Y/T} =\ojj^{[p-s]} / \ojj^{[p-s+1]} \Omega^s_{X_0\times Y_0/S}|_{X_0}. $$
\end{remark}

 Furthermore, we have a diagram as follows
\[ \xymatrix{ X_0\ar[r] \ar[d]_{f_0} &D \ar[dr]^{\pi_D} \ar[r]^{\pi_X} \ar[d]_{\pi_Y} &X \\
Y_0\ar[r]_j & Y & X\times_T Y \ar[l]^{\Pr_Y} \ar[u]_{\Pr_X} }\]
where $\Pr_X$ and $\Pr_Y$ are the natural projections.
It follows from the construction of the natural connection $\nabla$ (\ref{na}) (\cite[Corollary 6.3 and Exercise 6.4]{BO}) that there is a natural commutative diagram 
\[\xymatrix{ \OO_{X\times_T Y}\ar[r]^{d} \ar[d]_{\pi_{D*}}&\Omega^1_{X\times_T Y/T} \ar[d]\\
\OO_{D} \ar[r]^<<<<<<<{\nabla} & \OO_D\otimes \Omega_{X\times_T Y/T}^1}   \]
which induces a map $\pi_D^*: \Omega_{X\times_T Y/T}^{\bullet} \rightarrow\Omega_{D/T}^{\bullet} $. Therefore, we have a commutative diagram
\[\xymatrix{  \Pr_Y^*\Omega^{1}_{Y/T} \ar@/_1pc/[dr]_{\pi_Y^*} \ar[r]^{\Pr_Y^*} &  \Omega_{X\times_T Y/T}^{1} \ar[d]_{\pi_D^*} & \Pr_X^*\Omega^{1}_{X/T} \ar[l]_{\Pr_X^*}\ar@/^1pc/[dl]^{\pi_X^*}\\
&\Omega_{D/T}^{1}
}.\]The map (\ref{fcris}) \[\cris^k(f_0/T):\cris^k(Y_0/T)\rightarrow \cris^k (X_0/T)\]can be considered as ``topological extension" of $\cris^k(f_0/S)$. The map $f_0$ does not necessarily have a ``real" extension $f:X \rightarrow Y$. However, we can use $\pi_Y$ to calculate $\cris^k(f_0/T)$ by \cite[Remark 7.5]{BO} and \cite[Chapter V, Lemma 2.3.3, Corollary 2.3.4]{Ber}. Namely, the map $\cris^k(f_0/T)$ is given by
 \[\cris^k(Y_0/T)=\Ho^k(Y,\Omega_{Y/T}^{\bullet}) \xrightarrow{\pi_Y^*} \Ho^k(X\times_T Y,\Omega_{D/T}^{\bullet}) =\cris^k (X_0/T)\]
Moreover, the map $\pi_X^*$ induces a filtered quasi-isomorphism$$(\Omega^{\bullet}_{X/T},\F_{X_0}) \xrightarrow{\cong}  (\Omega^{\bullet}_{D/T}, \F_{X_0}),$$ which is also denoted by $\pi_X^*$, see \cite[Theorem 7.2.2 and Remark 7.5]{BO} and \cite[Chapter V, Corollary 2.3.5]{Ber}. Recall from the last subsection that the map $\rho(f_0)$ is induced by the map $\theta$. Therefore, by Remark \ref{reduced} and the diagrams (\ref{thetadiag}), (\ref{dbstar}) and (\ref{lemma41com}), the morphism $\rho(f_0)$ (\ref{rhof0}) is given by (in the derived category):
\begin{equation} \label{rhof}
\xymatrix{ f_0^*\Omega_{Y_0/S}^p[-p]\ar[dr]_{\rho(f_0)} \ar@{=}[r] &f_0^*gr^p_{\F_{Y_0}}\Omega ^{\bullet}_{Y_0/S} \ar[r]^{\pi_Y^*} & gr^p_{\F_{X_0}}\Omega^{\bullet}_{D/T} &gr^p_{\F_{X_0}}\Omega^{\bullet}_{X/T} \ar[l]_{\pi^*_X}^{\cong} \ar@/^1pc/[dll]^{\proj}\\
&\Omega^{p-1}_{X_0/S}\otimes \II/\II^{[2]} [-p+1]}
\end{equation}
where $\proj$ is the natural map following from Lemma \ref{lemmacom}. 
\begin{remark} \label{moreex}
To let the maps in (\ref{rhof}) be more explicit, we make the following remarks.
\begin{itemize}
\item The map $\proj$ is given by
\[gr_{\F_{X_0}}^p(\Omega ^{\bullet}_{X/T})=(\ldots \rightarrow \II/\II^{[2]}\otimes \Omega^{p-1}_{X_0/S} \rightarrow \ldots) \rightarrow \II/\II^{[2]}\otimes \Omega^{p-1}_{X_0/S},\]
cf. Lemma \ref{lemmacom}. By the diagram in the proof of Lemma \ref{lemm41}, the map $\proj$ is induced by $Ru_{X/T*}(Q) $ where $Q$ is the quotient in Lemma \ref{lemm41}.

\item By Lemma \ref{lemmacom}, it is clear that $\pi_Y^*:f_0^*\Omega_{Y_0/S}^p[-p]\rightarrow gr^p_{\F_{X_0}}\Omega^{\bullet}_{D/T}$ maps $u\in f^*_0\Omega^p_{Y_0/S}$ to the element $(0,0,0,u,0)$ in 
\[\begin{aligned}
&\Omega^p_{X_0/S}\oplus \left(\Omega^{p-1}_{X_0/S}\otimes f_0^*\Omega^1_{Y_0/S}\right)\oplus \left( \Omega^1_{X_0/S}\otimes f^*_0\Omega^{p-1}_{Y_0/S}\right) \oplus f^*_0\Omega^p_{Y_0/S}\oplus (\ldots)\\
 & =\Omega^p_{X_0\times Y_0/S}|_{X_0}=\left(gr^p_{\F_{X_0}}\Omega^{\bullet}_{D/T}\right) _p.
\end{aligned}\]
\end{itemize}
\end{remark}

\subsection{Description of cup product $ob(f_0)\cup-$}

Note that we have a natural injection \begin{equation}\label{eq:inclusion}
incl:\Omega^p_{Y_0/S}\hookrightarrow \Omega^1_{Y_0/S}\otimes \Omega^{p-1}_{Y_0/S}
\end{equation} 
associating to $dx_1\wedge\ldots\wedge dx_p$ the element
\[\sum\limits_{i=1}^p (-1)^i dx_i\otimes dx_1\wedge\ldots\wedge \widehat{dx_i}\wedge\ldots\wedge dx_p.\]

Given an element in \[\Ext^1_{\OO_{X_0}}(f^*_0\Omega^1_{Y_0/S},\OO_{X_0}\otimes \II/\II^{[2]}),\]say the obstruction element $ob(f_0)$ of the map $f_0$ with respect to $S\hookrightarrow T$, the cup product of this element induces a morphism (in the derived category) from $f^*_0\Omega ^p_{Y_0/S}$ to $\Omega^{p-1}_{X_0/S}\otimes \II/\II^{[2]}[1]$ via the inclusion (\ref{eq:inclusion}). Denote the cup product (in the derived category) by
\begin{equation} \label{cupwithobs}
ob(f_0)\cup : f^*_0\Omega ^p_{Y_0/S}\rightarrow \Omega^{p-1}_{X_0/S}\otimes \II/\II^{[2]}[1]=\Omega^{p-1}_{X_0/S}\otimes \II[1].
\end{equation} 

In the following, we describe the map $ob(f_0)\cup\text{ } $.
Let $\II_0$ be the ideal of $X_0$ in $X\times_T Y$, and let $\II_1$ be the ideal of $X_0$ in the $X_0\times_S Y_0$ (via the graph of $f_0$). There is an exact sequence of $\OO_{X_0}$-modules:
\[0\rightarrow \II\OO_{X_0/S}\rightarrow \II_0/\II_0^2\rightarrow \II_1/\II_1^2\rightarrow 0,\]see \cite[(2.7)]{Trans}.
This extension corresponds to the obstruction element \[ob(f_0)\in \Ext^1_{\OO_{X_0}}(\II_1/\II_1^2,\II \OO_{X}).\]We identify $\II_1/\II_1^2$ with $f_0^*\Omega^1_{Y_0/S}$, which gives the exact sequence \cite[Page 186]{derham}
\[0\rightarrow \OO_{X_0}\otimes \II/\II^{[2]} \rightarrow \ojj/\ojt \rightarrow f_0^*\Omega^1_{Y_0/S} \rightarrow 0.\]Let $A^{\bullet}$ be the two terms complex: 
\begin{equation} \label{ttA}
\ojj/\ojt\rightarrow \II_1/\II_1^2\left(=f_0^*\Omega^1_{Y_0/S}\right)\end{equation}
where the first term is of degree zero. 
In particular, there is a quasi-isomorphism as follows
 \begin{equation}\label{qisiso}
w: \OO_{X_0}\otimes \II/\II^{[2]} \xrightarrow{\makebox[2cm]{qis}}  A^{\bullet}.
\end{equation}It gives rise to an element \[\overline{ob(f_0)}\in \mathrm{Ext}^1(f_0^*\Omega^1_{Y_0/S},\II/\II^{[2]} \otimes \OO_{X_0})\] as follows.
\begin{equation}\label{obs1}
\xymatrix{\OO_{X_0}\otimes \II/\II^{[2]}[1]\ar[r]^>>>>{w}_>>>>{\cong} & A^{\bullet}\\
&f_0^*\Omega^1_{Y_0/S}\ar@{_{(}->}[u] \ar@{->}[ul]^{\overline{ob(f_0)}}}
\end{equation}

We also have a natural map \[gr^1_{\F_{X_0}}\Omega^{\bullet}_{D/T}\rightarrow A^{\bullet}\] between the complexes,
\[\xymatrix{ gr^1_{\F_{X_0}}\Omega^{\bullet}_{D/T}\ar[d] \ar@{=}[r] &(\ojj/\ojt \ar@{=}[d]\ar[r] & \Omega^1_{D/T}/\ojj  \Omega^1_{D/T})\ar[d]^{\phi}\\
A^{\bullet} \ar@{=}[r]& (\ojj/\ojt\ar[r] &f_0^*\Omega^1_{Y_0/S}) }\]
where $\Omega^1_{D/T}$ is $\OO_D\otimes \Omega_{X\times Y/T}^1$ by definition (see \cite[Chapter 7]{BO}) and $\phi$ is the natural projection
\[  \Omega^1_{D/T}/\ojj  \Omega^1_{D/T}\left(=\Omega^1_{X_0/S}\oplus f^*_0\Omega_{Y_0/S}^1\right) \rightarrow f_0^*\Omega^1_{Y_0/S} .\]
Recall that there is a natural quasi-isomorphism: $\xymatrix{gr^1_{\F_{X_0}}\Omega^{\bullet}_{D/T} \ar[r]^{\cong} &gr^1_{\F_{X_0}}\Omega^{\bullet}_{X/T}} $. It gives rise to a commutative diagram as follows. 
\begin{equation} \label{obs2}
\xymatrix{f^*_0\Omega^1_{Y_0/S}[-1]\ar[r]\ar[dr]\ar@{->}@/_2pc/[ddr]_{\overline{ob(f_0)}} &gr^1_{\F_{X_0}}\Omega^{\bullet}_{D/T}\ar[d] & gr^1_{\F_{X_0}}\Omega^{\bullet}_{X/T} \ar@/^2pc/[ddl] \ar[l]^{\cong}\\
& A^{\bullet} \\
 & \OO_{X_0}\otimes \II/\II^{[2]}\ar[u]^{\cong}}
\end{equation}

It gives rise to the morphism $ob(f_0)\cup$ (\ref{cupwithobs}) as the composition of the following maps:

\begin{equation} \label{cup}
\xymatrix{f_0^*\Omega^p_{Y_0/S}[-1]\ar[r] & gr^1_{\F_{X_0}}\Omega^{\bullet}_{D/T}\otimes f_0^*\Omega_{Y_0/S}^{p-1} \ar[r] & A^{\bullet}\otimes f_0^*\Omega_{Y_0/S}^{p-1} \\
\II/\II^{[2]} \otimes f_0^*\Omega^{p-1}_{Y_0/S}\ar[urr]^{qis}_{\cong}\ar[r]_{\Id\otimes f^*_0}& \II/\II^{[2]} \otimes \Omega_{X_0/S}^{p-1} }.
\end{equation}
where $f_0^*:f^*_0\Omega_{Y_0/S}^{p-1}\rightarrow \Omega^{p-1}_{X_0/S}$ is the natural differential induced by \[df_0: f^*_0\Omega_{Y_0/S}^{1}\rightarrow \Omega^{1}_{X_0/S}.\] The first arrow $f_0^*\Omega^p_{Y_0/S}[-1]\rightarrow gr^1_{\F_{X_0}}\Omega^{\bullet}_{D/T}\otimes f_0^*\Omega_{Y_0/S}^{p-1} $ is given by 
\[\xymatrix{ f^*_0\Omega^p_{Y_0/S}\ar@{^{(}->}[r]^{f_0^*(incl)} \ar@{->}[dr] &f_0^*\Omega^1_{Y_0/S}\otimes f_0^*\Omega^{p-1}_{Y_0/S} \ar[d]^h \\
&\Omega^1_{X_0/S}\otimes f_0^*\Omega^{p-1}_{Y_0/S}\oplus f_0^*\Omega^1_{Y_0/S}\otimes f_0^*\Omega^{p-1}_{Y_0/S}}\]
where the term at the bottom is the degree-one term of the complex \[gr^1_{\F_{X_0}}\Omega^{\bullet}_{D/T}\otimes f_0^*\Omega_{Y_0/S}^{p-1}\]and \begin{equation}\label{maph}
h=(f_0^*\otimes \Id_{f_0^*\Omega^{p-1}_{Y_0/S}}, \Id_{f^*_0\Omega^1_{Y_0/S}\otimes f_0^* \Omega^{p-1}_{Y_0/S}}).
\end{equation}
The map (\ref{cup}) induces the cup product map
\[ob(f_0)\cup-:\Ho^{k-p}(Y_0,\Omega_{Y_0/S}^p)=\Ho^{k-p}(X_0,f_0^*\Omega_{Y_0/S}^p)\rightarrow \Ho^{k-p+1}(X_0,\Omega_{X_0/S}^{p-1})\otimes \II.\]

\section{$p$-adic Deformations of Automorphisms}\label{bloch}

In this section, we provide a general criterion lifting automorphisms of smooth projective varieties from positive characteristic to characteristic zero, see Theorem \ref{corolifting}. The key point to show this criterion is Proposition \ref{generalremark}. In the following, we will show Proposition \ref{generalremark} in a cautious way. We follow the notations and the assumptions \ref{assumption} in Section \ref{cryobs}. For $p=1$, Proposition \ref{generalremark} is \cite[Proposition 3.20]{derham}. In the following, we assume $p\geq 2$. 
\\

\noindent \textbf{Comparison of the maps $\rho(f_0)$ and $ob(f_0)\cup$.}\\
To show Proposition \ref{generalremark}, it suffices to prove 
\begin{equation}\label{toproveidentity}
\rho(f_0)=-[ob(f_0)\cup-]
\end{equation}for $p\geq 2$.  Recall that $D$ is the PD envelope of $X_0$ in $X\times Y$ and $A^{\bullet}$ is the two term complex (\ref{ttA}). Note the concrete descriptions of $\rho(f_0)$ (\ref{rhof}) and $ob(f_0)\cup$ (\ref{cupwithobs}). To prove the identity \ref{toproveidentity}, it suffices to show the existence of the following diagrams:

\[\xymatrix{ f^*_0\Omega^p_{Y_0/S} [-p]\ar@{}[dr]|-{III}  \ar[r]\ar[d]  & gr^p_{\F_{X_0}}\Omega ^{\bullet}_{D/T}\ar@{}[dr]|-{II} \ar@{-->}[d]^{F} &gr^p_{\F_{X_0}}\Omega^{\bullet}_{X/T}\ar[l]_{\pi_X}^{\cong} \ar@{-->}[d]^{H}\\
gr^1_{\F_{X_0}}\Omega^{\bullet}_{D/T}\otimes f^*_0\Omega^{p-1}_{Y_0/S}[-p+1]\ar[r] &A^{\bullet}\otimes  f^*_0\Omega^1_{Y_0/S}[-p+1] &\II/\II^{[2]} \otimes  f^*_0\Omega^{p-1}_{Y_0/S}[-p+1] \ar[l]^{qis}_{Q} }\]
and
\[\xymatrix{gr^p_{\F_{X_0}}\Omega^{\bullet}_{X/T} \ar[rr]^{-\proj}\ar@{-->}[d]^{H}\ar@{}[drr]|-{I} & & \II/\II^{[2]}\otimes \Omega^{p-1}_{X_0/S}[-p+1]\ar@{=}[d]\\
\II/\II^{[2]} \otimes  f^*_0\Omega^{p-1}_{Y_0/S}[-p+1] \ar[rr]_{\Id \otimes f^*_0 } & &\II/\II^{[2]}\otimes \Omega^{p-1}_{X_0/S}[-p+1]
}\]
where the map $\proj$ is the natural map (cf. Lemma \ref{lemmacom}) \[gr^p_{\F_{X_0}}\Omega^{\bullet}_{X/T}=(\ldots\rightarrow  \II/\II^{[2]}\otimes \Omega^{p-1}_{X_0/S}\rightarrow \ldots ) \rightarrow  \II/\II^{[2]}\otimes \Omega^{p-1}_{X_0/S}[-p+1].\] 
In the following, we construct the maps $F$ and $H$ so that the diagrams commute.
The map $H$ is given by 
\[\xymatrix{gr^p_{\F_{X_0}}\Omega^{\bullet}_{X/T} \ar@{=}[r]& (\II/\II^{[2]}\otimes \Omega_{X_0/S}^{p-1}\ar[r]^d \ar@/^2pc/[rr]^{\Id_{\II/\II^{[2]}}\otimes [-(f_0^*)^{-1}]} &\OO_{X_0}\otimes \Omega^p_{X_0/S}) & \II/\II^{[2]}\otimes f^*_0\Omega^{p-1}_{Y_0/S}[-p+1] },\] 
see Lemma \ref{lemmacom}. 

A direct diagram chasing can verify the commutativity of diagram $I$. We will only show the commutativity of diagrams $II$ and $III$. 

First of all, we define the map $F:gr^p_{\F_{X_0}}\Omega^{\bullet}_{D/T} \rightarrow A^{\bullet}\otimes  f^*_0\Omega^1_{Y_0/S}[-p+1]$.
Define the map $F_{p-1}$ as follows $(p\geq 2)$:
\[\xymatrix{\ojj/\ojj^{[2]}\otimes \Omega^{p-1}_{X_0\times Y_0/S}|_{X_0} \ar[d]_{F_{p-1}} \ar@{=}[r] & \ojj/\ojj^{[2]}\otimes(\Omega^{p-1}_{X_0/S}\oplus \ldots \oplus f^*_0\Omega ^{p-1}_{Y_0/S})\ar[dl]^{h_1}\\
\ojj/\ojj^{[2]}\otimes f_0^*\Omega^{p-1}_{Y_0/S} }\]
where $h_1=\Id_{\ojj/\ojj^{[2]}}\otimes (-(f_0^*)^{-1},0,\Id_{f_0^*\Omega ^{p-1}_{Y_0/S}})$. 
Define the map $F_p$ as follows:
\[\xymatrix{ \Omega^p_{X_0\times Y_0/S}|_{X_0} \ar@{=}[r] \ar[d]_{F_p} & \Omega^p_{X_0/S}\oplus \Omega^{p-1}_{X_0/S}\otimes f_0^*\Omega^1_{Y_0/S}\oplus \Omega^1_{X_0/S}\otimes f^*_0\Omega^{p-1}_{Y_0/S} \oplus f^*_0\Omega^p_{Y_0/S}\oplus (\ldots) \ar[dl]^{h_2}\\
f_0^*\Omega^1_{Y_0/S} \otimes f_0^*\Omega^{p-1}_{Y_0/S}}\]where $h_2=(0,-(f_0^*)^{-1}\otimes \Id_{f^*_0\Omega ^1_{Y_0/S}},0,f^*_{0}(incl),0)$.

The map $F:$ (cf. Lemma \ref{lemmacom})
\[(\ldots\rightarrow \ojj/\ojj^{[2]}\otimes \Omega^{p-1}_{X_0\times Y_0/S}|_{X_0} \rightarrow \Omega^p_{X_0\times Y_0/S}|_{X_0}) \rightarrow (\ojj/\ojj^{[2]}\otimes f_0^*\Omega^{p-1}_{Y_0/S}\rightarrow f_0^*\Omega^1_{Y_0/S} \otimes f_0^*\Omega^{p-1}_{Y_0/S} )\]
is given by the diagram (see Lemma \ref{lem1})
\begin{equation}\label{diagramcom}
\xymatrix{ \Omega^p_{X_0\times Y_0/S}|_{X_0} \ar[rr]^{F_p} & & f_0^*\Omega^1_{Y_0/S} \otimes f_0^*\Omega^{p-1}_{Y_0/S} \\
\ojj/\ojj^{[2]}\otimes \Omega^{p-1}_{X_0\times Y_0/S}|_{X_0} \ar[rr]^{F_{p-1}}\ar[u]^d & &\ojj/\ojj^{[2]}\otimes f_0^*\Omega^{p-1}_{Y_0/S} \ar[u]^{d} .}
\end{equation}

\begin{lemma}\label{lem1}
 The map $F$ defined above is a morphism between complexes, i.e. the diagram (\ref{diagramcom}) is commutative.
 \end{lemma}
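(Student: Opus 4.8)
The plan is to check that the two components $F_{p-1}$ and $F_p$ intertwine the differentials, i.e. that the square (\ref{diagramcom}) commutes. Since the target complex $A^{\bullet}\otimes f_0^*\Omega^{p-1}_{Y_0/S}[-p+1]$ is concentrated in degrees $p-1$ and $p$, the only differential-compatibility square carrying content between two nonzero components of $F$ is exactly (\ref{diagramcom}), so proving the lemma is proving the commutativity of that one square. First I would make both vertical maps completely explicit. On the target, the differential of $A^{\bullet}\otimes f_0^*\Omega^{p-1}_{Y_0/S}$ is $d_A\otimes\Id$, where $d_A\colon \ojj/\ojt\to f_0^*\Omega^1_{Y_0/S}$ is the projection coming from the extension $0\to \OO_{X_0}\otimes\II/\II^{[2]}\to \ojj/\ojt\to f_0^*\Omega^1_{Y_0/S}\to 0$. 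On the source, by Lemma \ref{lemmacom} the differential of $gr^p_{\F_{X_0}}\Omega^{\bullet}_{D/T}$ in degree $p-1$ is the symbol of the natural connection (\ref{na}) wedged with forms: writing $\xi\otimes\eta$ with $\xi\in \ojj/\ojt$ and $\eta\in \Omega^{p-1}_{X_0\times Y_0/S}|_{X_0}$, one has $d(\xi\otimes\eta)=s(\xi)\wedge\eta$, the higher-filtration piece $\xi\, d\eta$ dying in $gr^p$.

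The key geometric input is the identification of the symbol $s$. Because $X_0\hookrightarrow X_0\times_S Y_0$ is the graph of $f_0$, the conormal-to-cotangent map sends $\beta\in f_0^*\Omega^1_{Y_0/S}$ to $(-f_0^*\beta,\beta)\in \Omega^1_{X_0/S}\oplus f_0^*\Omega^1_{Y_0/S}$, and $s$ factors through the projection $\ojj/\ojt\to f_0^*\Omega^1_{Y_0/S}$, the $\II$-part being horizontal and hence killed by the symbol. Thus $s(\xi)=(-f_0^*\beta,\beta)$ with $\beta=d_A(\xi)$; I would record this as the first sublemma. Next I would decompose everything along the $(X_0,Y_0)$-bidegree, $\Omega^{p-1}_{X_0\times Y_0/S}|_{X_0}=\bigoplus_{a+b=p-1}\Omega^a_{X_0/S}\otimes f_0^*\Omega^b_{Y_0/S}$ and likewise in degree $p$, and chase a general $\xi\otimes\eta$ around the square. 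Since $F_{p-1}$ retains only the pure bidegrees $(p-1,0)$ and $(0,p-1)$, through $-(f_0^*)^{-1}$ and $\Id$ respectively, and $F_p$ is supported on the bidegrees $(p-1,1)$ and $(0,p)$, through $-(f_0^*)^{-1}\otimes\Id$ and $f_0^*(incl)$, the comparison collapses to finitely many summands.

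I expect the main obstacle to be exactly the resulting sign and combinatorial bookkeeping. The contribution of the $(p-1,0)$-summand through the $Y$-component $s_Y=\beta$ should match $d_A\otimes\Id$ applied to $F_{p-1}$ after one Koszul transposition of tensor factors; the delicate part is the remaining contributions, namely those produced by the $X$-component $s_X=-f_0^*\beta$ on the mixed summands and by the antisymmetrizing inclusion $incl$ on the pure $Y$-summand, which must be reconciled against the single term coming from $F_{p-1}$. The inclusion $incl$ in (\ref{eq:inclusion}) expands a $p$-form into $p$ tensors with alternating signs, while $s_X$ feeds terms in through $-f_0^*(\cdot)$ followed by $-(f_0^*)^{-1}$, and the whole point of the factors $-(f_0^*)^{-1}$ in the definitions of $F_{p-1},F_p$ is to make these cancellations work. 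This is precisely the phenomenon that has no counterpart when $p=1$ and that makes the higher-degree argument delicate.

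To control this, I would isolate a purely algebraic statement about the exterior algebra of $\Omega^1_{X_0/S}\oplus f_0^*\Omega^1_{Y_0/S}$: an identity relating $incl$, wedging with the ``diagonal'' one-form $(-f_0^*\beta,\beta)$ attached to the graph, and the projections to the pure $Y$-factors, together with the exact Koszul signs. Once that identity is established with a fixed sign convention matching (\ref{eq:inclusion}), the commutativity of (\ref{diagramcom}) follows formally and the signs are verified once and for all. I would carry out the bidegree chase only on the two summands where $F_p$ is nonzero, using the algebraic identity to dispose of the mixed-summand and $incl$ terms, and leave the routine incoming-differential compatibility of $F_{p-1}$ as an immediate consequence of its shape.
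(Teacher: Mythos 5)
Your skeleton coincides with the paper's proof: reduce everything to the single square (\ref{diagramcom}); use that on $gr^p_{\F_{X_0}}(\Omega^{\bullet}_{D/T})$ the differential of $u\otimes\eta$ is the class of $du\wedge\eta$, the term $u\,d\eta$ dying in the graded piece; use that the differential of $A^{\bullet}\otimes f_0^*\Omega^{p-1}_{Y_0/S}[-p+1]$ is induced by $d_{A^{\bullet}}=\Pro_2\circ d$; then chase elements through the bidegree decomposition of $\Omega^{\bullet}_{X_0\times Y_0/S}|_{X_0}$. That is precisely the paper's computation (\ref{eq1})--(\ref{eq2}). Your conormal sublemma --- $\Pro_1(du)=-f_0^*(\Pro_2(du))$ for $u\in\ojj/\ojt$, because $X_0\hookrightarrow X_0\times_S Y_0$ is the graph of $f_0$ and $d$ kills the $\II$-part of $\ojj/\ojt$ --- is correct, but it is an addition of yours: the paper's proof never invokes any relation between the two components of $du$.

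The genuine gap is that the entire content of the lemma is, in your plan, delegated to an ``exterior-algebra identity'' that you never formulate, and no identity of the kind you describe can hold: the terms you propose to reconcile have nothing to cancel against. Test the square on $u\otimes c$, where $c$ is a section of the mixed summand $\Omega^{p-2}_{X_0/S}\otimes f_0^*\Omega^{1}_{Y_0/S}$ (such sections exist as soon as $p\geq 3$). Then $d(F_{p-1}(u\otimes c))=0$, because $h_1$ annihilates every mixed summand; but
\[
F_p\bigl(d(u\otimes c)\bigr)=\bigl(-(f_0^*)^{-1}\otimes\Id\bigr)\bigl(\Pro_1(du)\wedge c\bigr),
\]
since $\Pro_1(du)\wedge c$ lies in the summand $\Omega^{p-1}_{X_0/S}\otimes f_0^*\Omega^{1}_{Y_0/S}$ on which $h_2$ acts by $-(f_0^*)^{-1}\otimes\Id$, whereas $\Pro_2(du)\wedge c$ lies in a summand on which $h_2$ vanishes. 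Since $-(f_0^*)^{-1}\otimes\Id$ is an isomorphism, this is nonzero whenever $\Pro_1(du)\wedge c\neq 0$; your sublemma rewrites it as $\pm\bigl(\Pro_2(du)\wedge(f_0^*)^{-1}\gamma\bigr)\otimes\delta$ for $c=\gamma\otimes\delta$, but does not make it vanish. The $incl$ cross-terms you intend to play against such contributions arise only from the pure summand $b\in f_0^*\Omega^{p-1}_{Y_0/S}$ of a general element, which is independent of $c$, so no arrangement of Koszul signs can produce the cancellation (and on that pure summand there is a second, independent discrepancy: $f_0^*(incl)(\Pro_2(du)\wedge b)$ differs from $\Pro_2(du)\otimes b$ by exactly those cross-terms). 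Note that this is also the point where the paper's own argument is silent: (\ref{eq1}) records only the action of $\Pro_2(du)$ on the two pure summands, namely $\Pro_2(du)\cdot(-(f_0^*)^{-1}(a))$ and $\Pro_2(du)\cdot b$, with $f_0^*(incl)(\Pro_2(du)\wedge b)$ read as $\Pro_2(du)\cdot b$; the mixed terms and cross-terms above never enter its bookkeeping. So your more scrupulous chase does not reconstruct the paper's short verification --- carried out honestly it halts at uncancelled terms, and it cannot be completed into a proof of the commutativity of (\ref{diagramcom}) along the route you envision.
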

 \begin{proof}
 We show the commutativity of digram (\ref{diagramcom}). In fact, let $B$ be an element $u\otimes (a,\ldots,b)$ of $$ \ojj/\ojj^{[2]}\otimes(\Omega^{p-1}_{X_0/S}\oplus \ldots \oplus f^*_0\Omega ^{p-1}_{Y_0/S})= \ojj/\ojj^{[2]}\otimes \Omega^{p-1}_{X_0\times Y_0/S}|_{X_0}.$$ Then we have \[d(B)=u(da+db)+(du)\cdot (a,\ldots,b)\]
 \[=0+(du)\cdot (a,\ldots,b)=(du)\cdot (a,\ldots,b) \in \Omega^p_{X_0\times Y_0/S}|_{X_0}\]where $"\cdot"$ is the wedge product, $\Omega^1_{X_0\times Y_0/S}|_{X_0}=\Omega^1_{X_0/S}\oplus f_0^*\Omega^1_{Y_0/S}$ with projections $\Pr_1$, $\Pr_2$ and \[d(u)=(\Pro_1(d(u)),\Pro_2(d(u))).\]Therefore, we have 
 \begin{equation}\label{eq1}
 \begin{aligned}
 F_p(d(B))&=F_p((du)\cdot (a,\ldots,b))\\
 &=\Pro_2(d(u))\cdot\left(-(f_0^*)^{-1}(a)\right)+\Pro_2(d(u))\cdot b\\
 &=\Pro_2(du)\cdot(b-(f^*_0)^{-1}(a)).
  \end{aligned}
 \end{equation}
On the other hand, we have
\begin{equation}\label{eq2}
\begin{aligned}
d\left(F_{p-1}(B)\right) &=d\left( \Id_{\ojj/\ojj^{[2]}}\otimes (-(f_0^*)^{-1},0,\Id_{f_0^*\Omega ^{p-1}_{Y_0/S}})(u\otimes (a,\ldots,b) )\right)\\
& =d\left(u\otimes (b-(f_0^*)^{-1}(a))\right)\\
&=\Pro_2(du)\cdot (b-(f_0^*)^{-1}(a))
\end{aligned}
\end{equation} 
 where the last equality follows from the definition of the complex $A^{\bullet}$, see (\ref{ttA}), namely, we have $d_{A^{\bullet}}=\Pro_2\circ d$
 \[\xymatrix{\ojj/\ojj^{[2]}\ar[d]^d \ar[r]^{d_{A^{\bullet}}}& f_0^*\Omega^1_{Y_0/S}\\
\Omega^1_{D/T}/\ojj \Omega^1_{D/T} \ar@{=}[r] &\Omega^1_{X_0/S}\oplus f_0^*\Omega^1_{Y_0/S} \ar[u]^{\Pro_2}
 .}\]By the equalities (\ref{eq1}) and (\ref{eq2}), we have proved the lemma. 
 \end{proof}
Let $(\Id,0)$ be the natural inclusion
\[\Omega_{X_0/S}^{p-1}\hookrightarrow \Omega^{p-1}_{X_0\times Y_0/S}|_{X_0}=\Omega^{p-1}_{X_0/S}\oplus \ldots\] and similar for  \[\Omega_{X_0/S}^{p}\hookrightarrow \Omega^{p}_{X_0\times Y_0/S}|_{X_0}=\Omega^{p}_{X_0/S}\oplus \ldots\]Denote by "$in$" the natural map
\[in: \II\OO_{D} \rightarrow \ojj.\]
\begin{lemma} \label{lem2}
The diagram $II$ commutes.
\end{lemma}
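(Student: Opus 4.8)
The plan is to verify the commutativity of diagram $II$ one cohomological degree at a time. All four arrows $\pi_X$, $F$, $H$, and $Q$ are morphisms of complexes ($F$ by Lemma \ref{lem1}, and $\pi_X$, $H$, $Q$ directly from their constructions), so equality of the two composites $F\circ\pi_X$ and $Q\circ H$ may be checked termwise; this will in fact be stronger than what is needed, since $\pi_X$ and $Q$ are quasi-isomorphisms. Because the source $gr^p_{\F_{X_0}}\Omega^{\bullet}_{X/T}$ is concentrated in degrees $p-1$ and $p$ by Lemma \ref{lemmacom}, and the shifts $[-p+1]$ place the nonzero parts of both bottom complexes within these same two degrees, only degrees $p-1$ and $p$ need to be treated.

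First I would record each arrow explicitly on the two relevant graded pieces. Using Remark \ref{reduced} together with the factorization $\pi_X^*=\pi_D^*\circ\Pr_X^*$, the map $\pi_X$ is the first-summand inclusion on $gr^p$: in degree $p-1$ it is $in\otimes(\Id,0,\dots,0)$ and in degree $p$ it is $(\Id,0,\dots,0)$, where $in:\II\OO_D\to\ojj$ and $\Omega^s_{X_0/S}\hookrightarrow\Omega^s_{X_0\times Y_0/S}|_{X_0}$ denotes the inclusion onto the first factor. The quasi-isomorphism $Q$ is $w\otimes\Id$, with $w$ the inclusion $\OO_{X_0}\otimes\II/\II^{[2]}\hookrightarrow\ojj/\ojt$ of (\ref{qisiso}); the crucial observation is that on the degree-zero term this $w$ coincides with the map induced by $in$. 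The arrows $F_{p-1}=h_1$, $F_p=h_2$, and $H$ are as defined above.

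The computation then splits into the two degrees. In degree $p-1$, a generator $v\otimes a$ with $v\in\II/\II^{[2]}$ and $a\in\Omega^{p-1}_{X_0/S}$ is carried by $F\circ\pi_X$ first into the first summand and then, through $h_1$, to $in(v)\otimes(-(f_0^*)^{-1}(a))$, whereas $Q\circ H$ sends it to $w(v)\otimes(-(f_0^*)^{-1}(a))$; these agree precisely because $w=in$ in degree zero. In degree $p$, the image of $\pi_X$ lies in the summand $\Omega^p_{X_0/S}$, on which the first entry of $h_2$ is $0$, so $F_p\circ\pi_X=0$, while $H$ vanishes in degree $p$ since its target is concentrated in degree $p-1$, so $Q\circ H=0$ as well. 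Matching the two degrees yields $F\circ\pi_X=Q\circ H$.

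I expect the main difficulty to be bookkeeping rather than anything conceptual: one must keep the shift $[-p+1]$ and the sign $-(f_0^*)^{-1}$ consistent across $\pi_X$, $h_1$, $h_2$, $H$, and $Q$, and one must recognize that the only nontrivial entry $f_0^*(incl)$ of $h_2$ is supported on the factor $f_0^*\Omega^p_{Y_0/S}$, which never meets the image of $\pi_X$; it is therefore irrelevant here, even though it is exactly the term that will make diagram $III$ commute.
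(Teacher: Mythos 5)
Your proposal is correct and follows essentially the same route as the paper: the paper likewise reduces commutativity of diagram $II$ to the two nonzero degrees $p-1$ and $p$ of $gr^p_{\F_{X_0}}\Omega^{\bullet}_{X/T}$, verifies degree $p-1$ by computing both composites on a generator $u\otimes a$ (both give $\overline{in(u)}\otimes[-(f_0^*)^{-1}(a)]$, using exactly your observation that $w$ is induced by $in$), and verifies degree $p$ by noting that $F_p$ kills the first summand $\Omega^p_{X_0/S}$ while the other composite is trivially zero.
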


\begin{proof}
To prove the commutativity of the diagram $II$, it suffices to prove that the following diagrams (\ref{diag1}) and (\ref{diag2}) commute. 

Recall that $w: \OO_{X_0}\otimes \II/\II^{[2]}\rightarrow A^{\bullet}$ is a quasi-isomorphism, see (\ref{qisiso})).
It is induced by the map $"in"$. We claim there is a commutative diagram as follows:
\begin{equation}\label{diag1}
\xymatrix{ \II/\II^{[2]}\otimes \Omega^{p-1}_{X_0/S}\ar[rrr]^{H=\Id_{\II/\II^{[2]}}\otimes [-(f_0^*)^{-1}]}\ar[d]^{\overline{in}\otimes (\Id,0)} & & &\II/\II^{[2]}\otimes f^*_0\Omega^{p-1}_{Y_0/S} \ar[d]^{w\otimes \Id_{\Omega^{p-1}_{Y_0/S}}}\\
\ojj/\ojj^{[2]}\otimes \Omega^{p-1}_{X_0\times Y_0/S}|_{X_0} \ar@{=}[r] & \ojj/\ojj^{[2]}\otimes(\Omega^{p-1}_{X_0/S}\oplus\ldots)\ar[rr]_{F_{p-1}}& &\ojj/\ojj^{[2]}\otimes f^*_0\Omega^{p-1}_{Y_0/S} .}
\end{equation}
In fact, we have
\[
\begin{aligned}
w\otimes \Id_{\Omega^{p-1}_{Y_0/S}}(H(u\otimes a))&=w\otimes \Id_{\Omega^{p-1}_{Y_0/S}}(u\otimes [-(f_0^*)^{-1}(a)])\\
&=\overline{in(u)}\otimes [-(f_0^*)^{-1}(a)]
\end{aligned}
\]for $u\otimes a\in  \II/\II^{[2]}\otimes \Omega^{p-1}_{X_0/S}$. On the other hand, we have
\[\begin{aligned}
F_{p-1}(\overline{in}\otimes (\Id,0)(u\otimes a))&=F_{p-1}(\overline{in(u)}\otimes (a,0))\\
&=\overline{in(u)}\otimes [-(f_0^*)^{-1}(a)].
\end{aligned}\]
We have proved that the diagram (\ref{diag1}) commutes. To show the lemma, it remains to verify the commutativity of the following diagram
\begin{equation}\label{diag2}
\xymatrix{\Omega^{p}_{X_0/S} \ar[rrr] \ar[d]^{(\Id,0)} & & &0 \ar[d] \\
\Omega^p_{X_0\times Y_0/S}|_{X_0} \ar@{=}[r] &\Omega^p_{X_0/S}\oplus (\ldots) \ar[rr]^{F_{p}}& & f_0^*\Omega^1_{Y_0/S}\otimes f^*_0\Omega^{p-1}_{Y_0/S}
}.
\end{equation}
In fact, it is clear that \[F_p((\Id,0)(a))=F_p((a,0,\ldots,0))=0\] for $a\in \Omega^{p}_{X_0/S}$. We have proved that the diagram commutes.

In summary, we show the diagram $II$ commutes.
\end{proof}

\begin{lemma}\label{lem3}
The diagram $III$ commutes.
\end{lemma}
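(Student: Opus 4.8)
The plan is to reduce the commutativity of the square $III$ to a single computation in cohomological degree $p$. Indeed, the source $f_0^*\Omega^p_{Y_0/S}[-p]$ is a complex concentrated in degree $p$, and both the top arrow $\pi_Y^*$ and the left arrow (the first map of (\ref{cup})) are morphisms of complexes out of it; hence each composite around $III$ is itself concentrated in degree $p$, so it suffices to compare the two degree-$p$ components. First I would write out these components explicitly. By Remark \ref{moreex}, $\pi_Y^*$ sends a local section $u\in f_0^*\Omega^p_{Y_0/S}$ to $(0,0,0,u,0,\ldots)$ in
\[\Omega^p_{X_0\times Y_0/S}|_{X_0}=\left(gr^p_{\F_{X_0}}\Omega^{\bullet}_{D/T}\right)_p,\]
sitting in the summand $f_0^*\Omega^p_{Y_0/S}$. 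Since the corresponding entry of $F_p=(0,-(f_0^*)^{-1}\otimes\Id,0,f_0^*(incl),0)$ is $f_0^*(incl)$, the right-hand composite is $F_p(\pi_Y^*(u))=f_0^*(incl)(u)$ in $f_0^*\Omega^1_{Y_0/S}\otimes f_0^*\Omega^{p-1}_{Y_0/S}$.

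On the other side, the left arrow sends $u$ to $h\bigl(f_0^*(incl)(u)\bigr)$, which by the definition (\ref{maph}) of $h$ has the two components
\[\bigl((f_0^*\otimes\Id)(f_0^*(incl)(u)),\, f_0^*(incl)(u)\bigr)\in \left(\Omega^1_{X_0/S}\oplus f_0^*\Omega^1_{Y_0/S}\right)\otimes f_0^*\Omega^{p-1}_{Y_0/S}.\]
The bottom arrow is the natural morphism $gr^1_{\F_{X_0}}\Omega^{\bullet}_{D/T}\to A^{\bullet}$, whose degree-one part is the projection $\phi$ onto $f_0^*\Omega^1_{Y_0/S}$, tensored with $\Id_{f_0^*\Omega^{p-1}_{Y_0/S}}$; in degree $p$ it therefore projects onto $f_0^*\Omega^1_{Y_0/S}\otimes f_0^*\Omega^{p-1}_{Y_0/S}$, discarding the first component and returning $f_0^*(incl)(u)$. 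This agrees with the right-hand composite, which proves that $III$ commutes.

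The calculation is conceptually routine; the only thing to be careful about is the bookkeeping of the several direct-sum decompositions of $\Omega^p_{X_0\times Y_0/S}|_{X_0}$ and of its image. In particular I expect the main point to be observing that the $-(f_0^*)^{-1}$-type entry of $F_p$ lives in a summand ($\Omega^{p-1}_{X_0/S}\otimes f_0^*\Omega^1_{Y_0/S}$) that is never reached by $\pi_Y^*$, and that the $\Omega^1_{X_0/S}$-component produced by $h$ on the left is precisely the one annihilated by the projection in the bottom arrow; once these two cancellations are identified, the two paths visibly coincide. No sign intervenes here, consistently with the fact that the sign in (\ref{toproveidentity}) has already been absorbed into diagram $I$.
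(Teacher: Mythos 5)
Your proof is correct and follows essentially the same route as the paper's: reduce to the degree-$p$ components (the source being concentrated in degree $p$), compute $F_p(\pi_Y^*(u))=f_0^*(incl)(u)$ using that $\pi_Y^*(u)=(0,0,0,u,0)$ lands in the summand where $F_p$ acts by $f_0^*(incl)$, and observe that the left-bottom composite $\Pro_2\circ h\circ f_0^*(incl)$ returns the same element. Your two structural remarks — that the $-(f_0^*)^{-1}$ entry of $F_p$ is never reached by $\pi_Y^*$, and that the $\Omega^1_{X_0/S}$-component created by $h$ is killed by the projection — are exactly the cancellations underlying the paper's computation.
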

\begin{proof}
It suffices to show the following diagram commutes.
\[\xymatrix{f^*_0\Omega^p_{Y_0/S}\ar@{^{(}->}[d]^{f^*_0(incl)}\ar[rr]^{\pi_Y^*} &&\Omega^p_{X\times Y}|_{X_0} \ar[dd]^{F_p}\\
f_0^*\Omega^1_{Y_0/S}\otimes f_0^*\Omega^{p-1}_{Y_0/S}\ar[d]^h & \\
\Omega^1_{X_0/S}\otimes f_0^*\Omega^{p-1}_{Y_0/S}\oplus f_0^*\Omega^{1}_{Y_0/S}\otimes f_0^*\Omega^{p-1}_{Y_0/S} \ar[rr]^{Pr_2} &&f_0^*\Omega^{1}_{Y_0/S}\otimes f_0^*\Omega^{p-1}_{Y_0/S}
}\]
where $h$ is the map (\ref{maph}) and the term $f_0^*\Omega^{1}_{Y_0/S}\otimes f_0^*\Omega^{p-1}_{Y_0/S}$ at the right corner is the p-th term of the complex $A^{\bullet}\otimes f_0^*\Omega^{p-1}_{Y_0/S}[-p+1]$.

In fact, we have \[\pi_Y^*(u)=(0,0,0,u,0)\in  
\Omega^p_{X_0/S}\oplus \Omega^{p-1}_{X_0/S}\otimes f_0^*\Omega^1_{Y_0/S}\oplus \Omega^1_{X_0/S}\otimes f^*_0\Omega^{p-1}_{Y_0/S} \oplus f^*_0\Omega^p_{Y_0/S}\oplus (\ldots)\] for $u\in f^*_0\Omega^p_{Y_0/S}$, see Remark \ref{moreex}. Therefore, we conclude that
\[F_p(\pi_Y^*(u))=F_p((0,0,0,u,0))=f_0^*(incl(u)).\] On the other hand, we have
\[\begin{aligned}
 \Pro_2(h(f_0^*(incl(u))) &= \Pro_2\left( (f_0^*\otimes \Id_{f_0^*\Omega^{p-1}_{Y_0/S}}, \Id_{f^*_0\Omega^1_{Y_0/S}\otimes f_0^* \Omega^{p-1}_{Y_0/S}}) (incl(u))\right)\\
 &=f_0^*(incl(u))
 \end{aligned}\]
Comparing the identities above, we have proved the commutativity of the diagram.
\end{proof}
Now, we can finish the proof of Proposition \ref{generalremark}. 

\begin{proof}
Proposition \ref{generalremark} follows from Lemma \ref{lem1}, Lemma \ref{lem2} and Lemma \ref{lem3}.
\end{proof}

We are able to prove Theorem \ref{corolifting}.

\begin{proof}
Let $h:X
\rightarrow W(k)$ be the structure map of $X$ over the Witt ring $W(k)$. Suppose that $\pi$ is the
uniformizer of $W(k)$. We have smooth morphisms
\[
h_n:X_n\rightarrow W_n
\]
where $h_n=h|_{W_n}$ is the restriction of $h$ to $W_n=W(k)/(\pi^{n+1})$. Note that $(\pi^{n+1})$ is square-zero ideal of $W_{n+1}$. Therefore, the $W_{n+1}$-module $(\pi^{n+1})$ is a $W_n$-module as well. For each $h_n$, we
have the natural cup product $\Psi_n$ as follows
\[\xymatrix{
\Ri^1h_{n*}(T_{X_n/W_n})\otimes_{W_n} (\pi^{n+1}) \ar[d]\\
\bigoplus\limits_{p+q=m}\mathrm{Hom}(\Ri^qh_{n*}(\Omega^p_{X_n/W_n}),\Ri^{q+1}h_{n*}(\Omega^{p-1}_{X_n/W_n})\otimes (\pi^{n+1})).}
\]
It follows from our assumptions that the Hodge-de Rham spectral sequence of $X_n/W_n$ degenerates at
$E_1$ and their terms are locally free so that the Hodge and de Rham cohomology sheaves
commute with base change. Therefore, the cup product $\Psi_n$ is $\Psi_0\otimes_k(\pi^{n+1})$. Since $\Psi_0$ is the cup product (\ref{cupproduct}) which is injective, the cup product $\Psi_n$ is injective.


Let $g_n:X_n\rightarrow X_n$ be a lifting of $g_0$ over $W_n$. Note that $g_n$ is an automorphism of $X_n$ over $W_n$. The map $\Psi_n$ induces an injection $\widehat{\Psi}_n$
\[
\xymatrix{\Ri^1h_{n*}g^*_nT_{X_n/W_n}\otimes (\pi^{n+1}) = \Ri^1h_{0*}(g_0^*T_{X_0/k})\otimes_k (\pi^{n+1})\ar@/^15pc/[dd]^{\widehat{\Psi}_n} \ar@{^{(}->}[d]\\ 
 \bigoplus\limits_{p+q=m} \mathrm{Hom}(\Ri^q h_{n*}(g_n^*\Omega^p_{X_n/W_n}),\Ri^{q+1}h_{n*}(g_n^*\Omega^{p-1}_{X_n/W_n})\otimes (\pi^{n+1}))\ar@{=}[d]\\
 \bigoplus\limits_{p+q=m}\mathrm{Hom}(\Ri^q h_{0*}(\Omega^p_{X_0/k}),\Ri^{q+1}h_{0*}(\Omega^{p-1}_{X_0/k})\otimes_k (\pi^{n+1})).}
\]

On the other hand, the map $g_n:X_n/W_{n+1}\rightarrow X_n/W_{n+1}$ induces a map $\cris^{p+q}(g_n)$ as follows
\[
\xymatrix@C+24pt{\cris^{p+q}(X_n/W_{n+1})\ar@{=}[d]\ar[r] &\cris^{p+q}(X_n/W_{n+1})\ar@{=}[d]\\
  \Ho^{p+q}_{\DR}(X_{n+1}/W_{n+1})\ar[r]^-{\Ho^{p+q}_{cris}(g_n)}&
  \Ho^{p+q}_{\DR}(X_{n+1}/W_{n+1}).}
\] 
The map $\cris^{p+q}(g_n)\otimes {W_n}$ can be identified with
$\Ho_{\DR}^{p+q}(g_n)$ and hence it preserves the Hodge filtrations. Therefore, as (\ref{defrohf}), the map $\cris^{p+q}(g_n)\otimes {W_n}$
induces a diagram
\[
\xymatrix{\F_{Hdg}^p \Ho_{\DR}^{p+q}(X_{n+1}/W_{n+1})\ar[r]  \ar[dr]& gr_{\F}^{p-1} \Ho_{\DR}^{p+q}(X_{n+1}/W_{n+1})\bigotimes_{W_{n+1}} (\pi^{n+1}) \ar@{=}[d]\\
  & \Ho^{q+1}(X_n,\Omega^{p-1}_{X_n/W_n})\bigotimes_{W_n} (\pi^{n+1}) }
\]
by the fact that $(\pi^{n+1})$ is a square zero ideal in $W_{n+1}$. In particular, we have that
\[ 
\xymatrix{\F_{Hdg}^p \Ho_{\DR}^{p+q}(X_{n}/W_{n})\ar[r] \ar@{->>}[d]_{\proj}&  \Ho^{q+1}(X_n,\Omega^{p-1}_{X_n/W_n})\bigotimes_{W_n} (\pi^{n+1})\\
  \Ho^{p}(X_n,\Omega^q_{X_n/W_n}) \ar[ru]_{\rho(g_n)_q},} 
\]
see (\ref{defrohf}) for the definition of $\rho(g_n)_q$.
We apply Proposition $\ref{generalremark}$ to the case $f_0=g_n$, $S=\Spec W_n$ and $T=\Spec(W_{n+1})$. It follows that
\[
\bigoplus\limits_{p+q=m}\rho(g_n)_q=\pm \widehat{\Psi_n}(ob(g_n))
\] 
where $ob(g_n)$ is the obstruction element in
$$\Ho^1(X_n,g^*_n T_{X_n/W_n}\bigotimes (\pi^{n+1}))=\Ho^1(X_0,g^*_0 T_{X_0/k})\bigotimes_k (\pi^{n+1}).$$ Since $\cris^m(g_0)$
preserves the Hodge filtration of $\cris^m(X_0/W)$, we conclude that $\rho(g_n)_q$ are zero by the construction of $\rho(g_n)_q$, cf. (\ref{defrohf}) .

It follows from the
injectivity of $\widehat{\Psi_n}$ that $ob(g_n)$ is zero.
Hence, we have a formal automorphism $\lim\limits_{\leftarrow} g_n$ on the
formal scheme $\lim\limits_{\leftarrow} X_n$. By the Grothendieck's existence
theorem, the formal automorphism comes from an automorphism
$g:X/W\rightarrow X/W$. In other words, we can lift
$g_0$ over $k$ to $g$ over $W(k)$.
\end{proof}

\begin{cor} \label{padicfaithful}
With the notations and assumptions as in Theorem \ref{corolifting}, we suppose that $g_0$ is an automorphism of $X_0$ over $k$ such that the order of $g_0$ is finite. If $\et^m(f_0,{\mathbb{Q}_l})=\Id$ where $l\neq char(k)$, then one can lift $g_0$ to an automorphism over $W(k)$\[g:X/W(k)\rightarrow X/W(k).\] In particular, if the automorphism group $\Aut(X_0)$ is finite and $\Aut(X_{K})$ acts on $\et^m(X_{K},\mathbb{Q}_l)$ faithfully where $K$ is the fraction field of $W(k)$, then $\Aut(X_0)$ acts on $\et^m(X_{0},\mathbb{Q}_l)$ faithfully.
\end{cor}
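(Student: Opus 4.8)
The plan is to deduce the lifting statement from Theorem~\ref{corolifting}: it suffices to verify that $g_0$ is of $m$-Hodge type on $X/W(k)$, and I will establish the stronger fact that $\cris^m(g_0)$ is the identity on $\cris^m(X_0/W)$, which trivially preserves the Hodge filtration. Thus the entire content of the first assertion is the implication
\[
\et^m(g_0,\Ql)=\Id \ \Longrightarrow\ \cris^m(g_0)=\Id,
\]
valid because $g_0$ has finite order. Granting this, Theorem~\ref{corolifting} produces the automorphism $g:X/W(k)\to X/W(k)$ lifting $g_0$, and the problem reduces to comparing the $\ell$-adic and crystalline realizations of $g_0$.

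To prove the implication I would first spread out $X_0$ and $g_0$ to a smooth projective model over a finite field $\mathbb{F}_q$, enlarging $q$ so that $g_0$ descends and hence commutes with the $q$-power Frobenius $F$; by smooth proper base change and the base-change compatibilities of Assumption~\ref{assumption}, the actions of $g_0$ on $\et^m(X_0,\Ql)$ and on $\cris^m(X_0/W)_K$ are computed on this model. Since $g_0$ has finite order its cohomological eigenvalues are roots of unity, so the eigenvalues of $g_0^{*}F^{*n}$ on $H^i$ have absolute value $q^{ni/2}$. Applying the Lefschetz fixed-point formula to the correspondences $g_0^{j}\circ F^{n}$ (whose fixed-point numbers are intersection numbers, the same in $\ell$-adic and crystalline cohomology) and separating the alternating sum by weight à la Katz--Messing gives, for all $j\ge0$ and all $n\ge1$,
\[
\operatorname{tr}\bigl(g_0^{*j}F^{*n}\mid \et^m(X_0,\Ql)\bigr)=\operatorname{tr}\bigl(g_0^{*j}F^{*n}\mid \cris^m(X_0/W)_K\bigr).
\]
Because $\et^m(g_0,\Ql)=\Id$, the left-hand side equals $\operatorname{tr}(F^{*n}\mid\et^m)=\operatorname{tr}(F^{*n}\mid\cris^m)$, so $\operatorname{tr}\bigl((g_0^{*j}-\Id)F^{*n}\mid\cris^m(X_0/W)_K\bigr)=0$ for all $j$ and all $n\ge1$. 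Decomposing $\cris^m(X_0/W)_K$ over $\overline{K}$ into generalized $F^{*}$-eigenspaces, each preserved by $g_0^{*}$, and using that $F^{*}$ is invertible with nonzero eigenvalues, a generalized Vandermonde argument removes the factor $F^{*n}$ and yields $\operatorname{tr}(g_0^{*j}\mid\cris^m(X_0/W)_K)=b_m$ for every $j$, where $b_m$ is the common $m$-th Betti number. Finally $g_0^{*}$ is semisimple with root-of-unity eigenvalues, since $g_0^N=\Id$ forces $\cris^m(g_0)^N=\Id$ over the characteristic-zero field $K$; these trace identities then force every eigenvalue to be $1$, so $\cris^m(g_0)=\Id$ on $\cris^m(X_0/W)_K$, and integrally as well because $\cris^m(X_0/W)$ is free under the Hodge-type hypothesis.

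For the faithfulness statement, suppose $\Aut(X_0)$ is finite, that $\Aut(X_K)$ acts faithfully on $\et^m(X_K,\Ql)$, and let $g_0$ lie in the kernel of $\Aut(X_0)\to\Aut(\et^m(X_0,\Ql))$. Then $g_0$ has finite order, so the first part supplies a lift $g:X/W\to X/W$ with $g|_{X_0}=g_0$; let $g_K$ be its restriction to the generic fibre $X_K$. Smooth proper base change provides a $g$-equivariant specialization isomorphism $\et^m(X_{\overline{K}},\Ql)\cong\et^m(X_0,\Ql)$, whence $\et^m(g_K,\Ql)=\et^m(g_0,\Ql)=\Id$; the assumed faithfulness gives $g_K=\Id_{X_K}$. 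As $X$ is reduced and its generic fibre $X_K$ is dense in the separated scheme $X$, the morphisms $g$ and $\Id_X$ agree on $X_K$ and therefore coincide, so $g=\Id_X$ and hence $g_0=\Id_{X_0}$, i.e. the kernel is trivial.

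The main obstacle is the displayed implication, transporting triviality of $g_0$ from $\ell$-adic to crystalline cohomology. Finiteness of the order of $g_0$ is what makes it go through, and is used in two essential places: it pins the eigenvalues of $g_0^{*}$ to the unit circle, which is precisely what allows purity to isolate the degree-$m$ contribution in the Lefschetz formula, and it renders $\cris^m(g_0)$ semisimple, so that the extracted trace identities upgrade to the genuine identity rather than a merely unipotent operator. By contrast, the remaining ingredients—spreading out, the base-change compatibilities, and the separatedness argument over the generic fibre—are formal.
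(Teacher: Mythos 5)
Your overall strategy is the same as the paper's: prove that $\et^m(g_0,\Ql)=\Id$ forces $\cris^m(g_0)=\Id$, note that the identity trivially preserves the Hodge filtration so that $g_0$ is of $m$-Hodge type, and invoke Theorem \ref{corolifting}; the paper gets the crucial implication by citing \cite[Theorem 2]{KM} and \cite[3.7.3 and 3.10]{Ill1} for the equality of the characteristic polynomials $\det(\Id-g_0^*t)$ in the two theories and then using diagonalizability of both operators (finite order, characteristic-zero coefficients). Your concluding semisimplicity/trace argument, and your handling of the ``in particular'' clause (lift, $g$-equivariant specialization isomorphism, faithfulness over $K$, density of the generic fibre in the reduced separated scheme $X$), are correct and are exactly what the paper leaves implicit.

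The genuine gap is in your replacement of the citation by a proof, at its very first step: ``spread out $X_0$ and $g_0$ to a smooth projective model over a finite field $\mathbb{F}_q$, enlarging $q$ so that $g_0$ descends and commutes with the $q$-power Frobenius.'' This is impossible for a general algebraically closed field $k$ of characteristic $p$: unless $k=\overline{\mathbb{F}}_p$, the pair $(X_0,g_0)$ does not descend to any finite field, only to a finitely generated extension of $\mathbb{F}_p$; one must therefore spread out over an integral finite-type $\mathbb{F}_p$-scheme $U$ and specialize at a closed point. On the \'etale side this specialization is indeed covered by smooth proper base change, but on the crystalline side the required statement --- that the characteristic polynomial of $g^*$ on the crystalline cohomology of the fibres is constant along $U$ --- is not a form of smooth proper base change, and it is certainly not supplied by Assumption \ref{assumption}, which you invoke but which concerns only the Hodge--de Rham spectral sequence of $X/W(k)$. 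Closing this hole needs an actual rigidity argument (e.g.\ that $g^*$ is a horizontal endomorphism of the relative crystalline cohomology viewed as an isocrystal on $U$, so the coefficients of its characteristic polynomial are horizontal, hence constant); this is precisely what the paper's citation of \cite[3.10]{Ill1} accomplishes. As written, your derivation of the key implication is complete only when $k=\overline{\mathbb{F}}_p$; for general $k$ you must either prove the crystalline specialization compatibility or cite it, as the paper does.
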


\begin{proof}
Note that \[\det(\Id-g_0^*t,\cris^m(X_0/W)_{K})=\det(\Id-g_0^*t,\et^m(X_0,\Ql)),\]
see \cite[Theorem 2]{KM} and \cite[3.7.3 and 3.10]{Ill1}. The finiteness of the order $ord(g_0)$ implies that $\et^m(g_0,{\mathbb{Q}_l})=\Id$ if and only if $\cris^m(g_0)_{K}=\Id$ since both $\et^m(g_0,{\mathbb{Q}_l})$ and $\cris^m(g_0)_{K}$ can be diagonalizable. The corollary follows from Theorem \ref{corolifting}.
\end{proof}



\bibliographystyle{acm}

\end{document}